\newcommand{\Hplus}{+_{_H}}
\newcommand{\Vplus}{+_{_V}}
\newcommand{\Rows}{\mathsf{DistRows}}
\newcommand{\Hook}{\mathsf{Hook}}
\newcommand{\Rect}{\mathsf{Rect}}
\newcommand{\height}{\mathsf{height}}
\newcommand{\Irrep}{\mathsf{Irrep}}
\newcommand{\llambda}{\mathbold{\lambda}}
\newcommand{\mmu}{\mathbold{\mu}}
\newcommand{\nnu}{\mathbold{\nu}}
\newcommand{\ggamma}{\mathbold{\gamma}}
\newcommand{\cchi}{\mathbold{\chi}}
\newcommand{\vvarrho}{\mathbold{\varrho}}
\newcommand{\ttau}{\mathbold{\tau}}
\newcommand{\ttheta}{\mathbold{\theta}}
\newcommand{\Reg}{\mathbold{\textbf{Reg}}}
\DeclareMathOperator*{\Hsum}{\scalerel*{\enspace \Sigma_{_H}}{\sum}}
\DeclareMathOperator*{\Vsum}{\scalerel*{\enspace \Sigma_{_V}}{\sum}}
\newcommand{\verteq}{\rotatebox{90}{$\,=$}}
\newcommand{\equalto}[2]{\underset{\scriptstyle\overset{\mkern4mu\verteq}{#2}}{#1}}
\definecolor{lightgreen}{rgb}{0,0.8,0}
\definecolor{darkgreen}{rgb}{0,0.3,0}
\definecolor{lightblue}{rgb}{0,0,0.65}
\definecolor{darkblue}{rgb}{0,0,0.4}
\definecolor{lightred}{rgb}{0.8,0,0}
\definecolor{darkred}{rgb}{0.3,0,0}
\newtheorem{thm}{Theorem}[section]
\newtheorem{lem}[thm]{Lemma}
\newtheorem{prop}[thm]{Proposition}
\newtheorem{cor}[thm]{Corollary}
\newtheorem{conj}[thm]{Conjecture}
\theoremstyle{definition}
\newtheorem{defn}[thm]{Definition}
\newtheorem{rem}[thm]{Remark}
\title{Covering $\Irrep(S_n)$ With Tensor Products and Powers}
\date{}		
\author{Mark Sellke}					% Activate to display a given date or no date
\begin{document}
\maketitle

\abstract{We study when a tensor product of irreducible representations of the symmetric group $S_n$ contains all irreducibles as subrepresentations --- we say such a tensor product \emph{covers} $\Irrep(S_n)$. Our results show that this behavior is typical. We first give a general sufficient criterion for tensor products to have this property, which holds asymptotically almost surely for constant-sized collections of (Plancherel or uniformly) random irreducibles. We also consider the minimal tensor power of a single fixed irreducible representation needed to cover $\Irrep(S_n)$. Here a simple lower bound comes from considering dimensions, and we show it is always tight up to a universal constant factor as was recently conjectured by Liebeck, Shalev, and Tiep.}

\vspace{0.5cm}

\tableofcontents

\newpage

\section{Introduction}

A vast amount is known about representations of the symmetric groups. However, additive decompositions of their tensor products into irreducibles have proven difficult to study. These decompositions are defined by \emph{Kronecker coefficients} which also appear in the study of geometric complexity theory (see \cite{GCT}) and quantum mixed states (see \cite{quantum1,quantum2}). Even checking whether a Kronecker coefficient vanishes is known to be NP hard \cite{IkenHard,IkenNP}. By contrast, the related Littlewood-Richardson coefficients and character values of irreducible representations have been long known to have combinatorial interpretations via the Littlewood-Richardson and Murnaghan-Nakayama rules. The Saxl Conjecture below encapsulates some of this lack of understanding.

\begin{defn}

We say a representation $V$ of the symmetric group $S_n$ \emph{covers} $\Irrep(S_n)$ if it contains all irreducible representations of $S_n$ as subrepresentations.

\end{defn}

\begin{conj}[Saxl Conjecture]

\label{conj:saxl}
For every $n$ except $2,4,9$ there exists an irreducible representation $\llambda$ of $S_n$ such that $\llambda\otimes \llambda$ covers $\Irrep(S_n)$. Furthermore when $n=\binom{r+1}{2}$ is a triangular number, we can take $\llambda$ to be the \emph{staircase representation} $\vvarrho_r$ corresponding to the Young diagram $(r,r-1,\dots,1)$.

\end{conj}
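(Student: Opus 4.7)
The plan is to establish, for $n = \binom{r+1}{2}$, that every Kronecker coefficient $g(\vvarrho_r, \vvarrho_r, \mmu)$ is positive, which is equivalent to $\vvarrho_r \otimes \vvarrho_r$ covering $\Irrep(S_n)$. The natural starting point is the character sum
\[
g(\vvarrho_r, \vvarrho_r, \mmu) = \frac{1}{n!} \sum_{\sigma \in S_n} \chi^{\vvarrho_r}(\sigma)^2 \, \chi^{\mmu}(\sigma).
\]
The identity contribution $(\dim \vvarrho_r)^2 (\dim \mmu)/n!$ is enormous, so the core strategy is to show that the remaining terms cannot overwhelm it.

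First I would apply Murnaghan--Nakayama and Roichman-style bounds to control $|\chi^{\vvarrho_r}(\sigma)|$ by the number of border-strip tableaux; the staircase's self-conjugate shape and maximal perimeter make these estimates especially favorable, and on many conjugacy classes $\chi^{\vvarrho_r}$ vanishes outright. Combined with the standard multiplicative bound $|\chi^{\mmu}(\sigma)| \le (\dim \mmu) \cdot q(\sigma)^{n - \text{fix}(\sigma)}$ for some $q(\sigma)<1$, this rules out meaningful negative contributions from permutations with many moved points and reduces the problem to a small set of ``resonant'' conjugacy classes.

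The main obstacle, and the reason this conjecture has remained open, is handling $\sigma$ on which $\chi^{\vvarrho_r}(\sigma) \ne 0$ while $|\chi^{\mmu}(\sigma)|$ is of order comparable to $\dim \mmu$, which typically happens when $\mmu$ is an extremal shape such as a long hook or a near-rectangle. For these $\mmu$ I would attempt two complementary reductions: a Pieri/Littlewood--Richardson decomposition branching $\vvarrho_r$ into components whose Kronecker product with $\mmu$ is known to be positive (in the spirit of Ikenmeyer's and Pak--Panova--Vallejo's partial results), and the Christandl--Harrow--Mitchison--Walter semigroup property to reduce an arbitrary $\mmu$ to a finite collection of base cases that can be verified directly.

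The hardest step is certifying positivity uniformly over the resonant $\mmu$ where no single technique dominates; a combined strategy will likely mix character-bound arguments for ``generic'' $\mmu$ with combinatorial Kronecker-positivity arguments for a bounded family of extremal shapes, the exact specification of which is itself not currently understood. For non-triangular $n$ the second half of the conjecture is harder still, since one must choose $\llambda$: natural candidates are ``near-staircase'' shapes obtained by adding or removing a small number of boxes, and one would hope that a stability statement for covering under such perturbations carries Saxl-type positivity from the triangular case to its neighbors, but no such stability input is currently in hand.
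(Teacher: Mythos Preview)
The statement you are attempting to prove is Conjecture~\ref{conj:saxl}, which the paper explicitly presents as an \emph{open} conjecture and does not prove. There is therefore no ``paper's own proof'' to compare against; the paper instead uses the weaker Theorem~\ref{thm:SaxlFourth} (that $\vvarrho_r^{\otimes 4}$ covers $\Irrep(S_n)$) as a substitute input for its main results.

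Your proposal is not a proof but a research outline, and you yourself identify the gaps: you note that controlling the ``resonant'' conjugacy classes where $\chi^{\vvarrho_r}(\sigma)\neq 0$ and $|\chi^{\mmu}(\sigma)|$ is comparable to $\dim\mmu$ is ``the reason this conjecture has remained open,'' and you conclude the non-triangular case with ``no such stability input is currently in hand.'' These are honest admissions that the argument does not close. The character-sum strategy you describe is a natural first thought, but it is known to be insufficient on its own: the cancellation in $\sum_\sigma \chi^{\vvarrho_r}(\sigma)^2\chi^{\mmu}(\sigma)$ is genuinely delicate for extremal $\mmu$, and existing partial results (hooks, two-row shapes, dominance-comparable shapes) rely on combinatorial constructions rather than character bounds precisely because the analytic approach stalls there. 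Your appeal to the semigroup property to ``reduce to a finite collection of base cases'' is not justified---the semigroup property lets you build up positive Kronecker relations from smaller ones, but it does not give a reduction in the reverse direction that would confine the problem to finitely many $\mmu$.
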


This conjecture, proposed in a 2012 lecture by Saxl, has attracted recent interest and parallels the work \cite{steinbergsquare} establishing an analogous result in groups of Lie type. The paper \cite{Pak} is the first to study Conjecture~\ref{conj:saxl} and shows that $\vvarrho_r^{\otimes 2}$ contains all hooks and two-row partitions. Moreover they conjecture that various other shapes should suffice in place of the staircase. The work \cite{IkenDom} shows that any Young diagram comparable to a staircase in the dominance partial order is contained in $\vvarrho_r^{\otimes 2}$. \cite{li2018} shows that irreducibles with Durfee size up to $3$ are contained, and reduces the same result for any fixed Durfee size to a finite case check.

In our previous work \cite{LuoSellke}, we showed $\vvarrho_r^{\otimes 2}$ contains asymptotically almost all irreducible representations in uniform and Plancherel measure, and that the tensor fourth power $\vvarrho_r^{\otimes 4}$ contains all irreducibles.\footnote{We show the latter from the former in Appendix A, simplifying \cite{LuoSellke} which used two separate arguments.} The methods of that paper are highly specialized to the staircase, relying on decomposing staircases into smaller staircases and the aforementioned result of \cite{IkenDom}. It is natural to wonder if such covering results hold more generically.

This paper shows that covering $\Irrep(S_n)$ is indeed a generic behavior for tensor products of irreducible representations. To the best of our knowledge, there have been no comparable prior results toward this rather natural question. We begin by giving a general sufficient criterion for such a tensor product to cover $\Irrep(S_n)$. Roughly, the set of tensored Young diagrams must contain enough \emph{disjoint similar pairs}. A primary application is the following corollary, which shows that a constant number of \emph{random} irreducible representations cover $\Irrep(S_n)$ when tensored. Here and throughout we use the partition notation $\llambda\vdash n$ to indicate that $\llambda$ is an irreducible representation of $S_n$, in accordance with the bijection between irreducible representations and partitions or Young diagrams.

\begin{restatable}{thm}{randomproduct}
\label{thm:randomproduct}
There exists an absolute constant $k\in\mathbb Z_+$ such that if $\llambda_{(1)},\dots,\llambda_{(k)}\vdash n$ are arbitrarily coupled Plancherel or uniformly random irreducible representations of $S_n$, then the tensor product \[\bigotimes_{i=1}^k \llambda_{(i)}\] covers $\Irrep(S_n)$ asymptotically almost surely, i.e. with probability $1-o_{n}(1)$.
\end{restatable}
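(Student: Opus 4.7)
The plan is to apply the general covering criterion established earlier in this paper — which requires the constituent Young diagrams to share many distinct row lengths and to have small pairwise symmetric difference — by showing its hypotheses hold with high probability for a constant number of random partitions. The key input is the classical concentration, after rescaling by $\sqrt{n}$, of a random $\llambda \vdash n$ onto a deterministic limit curve: the Vershik--Kerov--Logan--Shepp curve for Plancherel measure, or the Vershik/Temperley curve for uniform measure, with quantitative error bounds in both cases. I would fix a notion of \emph{typical} partition requiring $L^\infty$ deviation at most $\epsilon_n \sqrt{n}$ from the appropriate curve, with $\epsilon_n \to 0$ slowly enough that a single random $\llambda$ is typical with probability $1 - o_n(1)$.

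First, since the coupling of the $\llambda_{(i)}$ is arbitrary but each marginal is a fixed measure, a union bound shows that for any fixed $k$ all of $\llambda_{(1)}, \dots, \llambda_{(k)}$ are simultaneously typical with probability $1 - o_n(1)$. The remainder of the argument is conditional on this event.

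Second, I would verify the two criterion hypotheses on this event. The symmetric-difference bound is essentially automatic: two partitions within $L^\infty$ distance $\epsilon_n \sqrt{n}$ of the same limit curve have $|\llambda \triangle \mu| = O(\epsilon_n \cdot n) = o(n)$. For shared distinct row lengths, I would exhibit a deterministic set $R$ of $\Omega(\sqrt{n})$ integers drawn from the \emph{bulk} of the limit shape — specifically, rows whose indices lie in a range where the limit curve has slope bounded away from $0$ and $\infty$, so that consecutive rows naturally differ by at least $1$ — and argue that every typical $\llambda$ realizes all but $o(\sqrt{n})$ elements of $R$ as a row length. Any two typical partitions then share $\Omega(\sqrt{n})$ distinct row lengths, far in excess of whatever threshold the general criterion demands; the constant $k$ in the theorem is then whichever value the criterion requires for these parameter regimes.

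The main obstacle I anticipate is the shared distinct-row-lengths condition, because distinctness is a discrete and local feature, sensitive to row-level fluctuations in a way that global shape concentration does not immediately address. I would handle this by avoiding plateau regions of the limit shape (where adjacent rows of $\llambda$ would coincide) and the extreme edges (where Tracy--Widom-type fluctuations of order $n^{1/6}$ in the Plancherel case destroy robustness of individual row lengths), restricting $R$ to an inner bulk region where standard interlacing and Young-diagram arguments show that each typical row length is stably realized. By contrast, the symmetric-difference check is immediate from bulk $L^1$ concentration and imposes no real additional difficulty.
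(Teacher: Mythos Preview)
Your symmetric-difference step via limit-shape concentration is fine and is essentially what the paper uses. The gap is in your shared-distinct-rows step. You propose a \emph{deterministic} set $R$ of $\Omega(\sqrt n)$ integers such that every typical $\llambda$ realizes all but $o(\sqrt n)$ of $R$ as row lengths. For Plancherel measure this is false: by the discrete-sine determinantal description of Borodin--Okounkov--Olshanski, for each $i$ in the bulk the indicator that $i$ corresponds to a distinct row length has probability $V(a)+o(1)$ with $V(a)\in(0,1)$ strictly. Hence for every candidate $m$ in the bulk, the event ``$m$ is a row length of $\llambda$'' has probability bounded away from $1$, so the expected number of elements of any fixed $R\subseteq[\Theta(\sqrt n)]$ that $\llambda$ realizes is at most $(1-c)|R|$ for some $c>0$, and the event ``$\llambda$ realizes all but $o(\sqrt n)$ of $R$'' has probability $o(1)$, not $1-o(1)$. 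Restricting to a region of bounded slope does not help: bounded slope only says the \emph{average} gap between consecutive rows is $\Theta(1)$, which is entirely compatible with each individual gap being $0$ with probability bounded away from $0$. Your ``interlacing'' remark does not produce a mechanism forcing any particular integer to appear.

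The paper proceeds differently and avoids this obstruction. It never claims any two typical samples share $\Omega(\sqrt n)$ row lengths. Instead it (i) shows that a \emph{single} random $\llambda$ has $\Rows(\llambda)\geq\varepsilon\sqrt n$ a.a.s.\ (for Plancherel this already requires a second-moment computation using the determinantal correlations; it is not a consequence of shape concentration), and (ii) deduces shared rows by a pigeonhole over \emph{many} samples: given $\Theta(\varepsilon^{-1})$ subsets of $[C\sqrt n]$ each of size $\geq\varepsilon\sqrt n$, some two must intersect in $\Omega(\varepsilon^2\sqrt n)$ points. This is why $k$ in the theorem is allowed to be a large constant rather than just the number demanded by the covering criterion itself. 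Your outline can be repaired by replacing the deterministic-$R$ claim with step (i) and this pigeonhole, but as written the central step does not go through.
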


Another interpretation of this result is as follows. Recall that the Kronecker coefficients $g\big(\llambda,\mmu,\nnu\big)$ are symmetric in their three arguments and defined by
\[
\llambda\otimes \mmu=\bigoplus_{\nnu}g(\llambda,\mmu,\nnu)\nnu.
\]
We may similarly define extended Kronecker coefficients $g\big(\llambda_{(1)},\llambda_{(2)},\dots,\llambda_{(k+1)}\big)$ so that 
\[
\bigotimes_{i=1}^k\llambda_{(i)} = \bigoplus g\big(\llambda_{(1)},\llambda_{(2)},\dots,\llambda_{(k+1)}\big) \llambda_{(k+1)}.
\]
The following corollary, immediate from Theorem~\ref{thm:randomproduct}, states that almost all extended Kronecker coefficients are positive for a suitable constant $k$. (In fact the discussion in Appendix A shows how to obtain the reverse implication from Corollary~\ref{cor:randomkron} to Theorem~\ref{thm:randomproduct}.) It would be very interesting to establish Corollary~\ref{cor:randomkron} for $k=2$, i.e. to show that almost all ordinary Kronecker coefficients are positive.

\begin{cor}
\label{cor:randomkron}
There exists an absolute constant $k\in\mathbb Z_+$  such that if $\llambda_{(1)},\dots,\llambda_{(k+1)}\vdash n$ are arbitrarily coupled Plancherel or uniformly random irreducible representations of $S_n$, then the extended Kronecker coefficient $g\big(\llambda_{(1)},\llambda_{(2)},\dots,\llambda_{(k+1)}\big)$ is nonzero asymptotically almost surely.
\end{cor}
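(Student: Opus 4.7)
The plan is to derive Corollary~\ref{cor:randomkron} directly from Theorem~\ref{thm:randomproduct}. The bridge between the two is the definitional equivalence
\[
g(\llambda_{(1)},\dots,\llambda_{(k+1)}) > 0
\;\Longleftrightarrow\;
\llambda_{(k+1)} \text{ is a subrepresentation of } \bigotimes_{j=1}^{k}\llambda_{(j)},
\]
which is immediate from the decomposition $\bigotimes_{j\leq k}\llambda_{(j)} = \bigoplus_{\nnu} g(\llambda_{(1)},\dots,\llambda_{(k)},\nnu)\,\nnu$ written in the excerpt just before the corollary statement.

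I would then take the constant $k$ furnished by Theorem~\ref{thm:randomproduct} and argue as follows. Fix any coupling of $\llambda_{(1)},\dots,\llambda_{(k+1)}$ with the prescribed Plancherel or uniform marginals. Its projection onto the first $k$ coordinates is itself an arbitrarily coupled family of Plancherel/uniform irreducibles of $S_n$, so Theorem~\ref{thm:randomproduct} applies and yields that $\bigotimes_{j\leq k}\llambda_{(j)}$ covers $\Irrep(S_n)$ with probability $1-o_n(1)$. On this covering event, the random representation $\llambda_{(k+1)}$ --- whatever its joint law with the others --- appears as a subrepresentation, and so by the equivalence above $g(\llambda_{(1)},\dots,\llambda_{(k+1)}) > 0$. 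Taking an unconditional probability then gives the corollary with the same $k$.

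The main point to highlight is that there is essentially no obstacle here: the deduction is a formal rephrasing, and the ``arbitrarily coupled'' quantifier built into Theorem~\ref{thm:randomproduct} is precisely what lets us dispense with any hypothesis on how $\llambda_{(k+1)}$ depends on the others. As noted parenthetically in the excerpt, the converse direction --- recovering Theorem~\ref{thm:randomproduct} from Corollary~\ref{cor:randomkron} --- is the one that requires nontrivial additional argument (deferred to the appendix); in the forward direction asked for here, the implication is truly immediate.
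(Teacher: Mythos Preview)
Your proposal is correct and matches the paper's approach exactly: the paper states that Corollary~\ref{cor:randomkron} is ``immediate from Theorem~\ref{thm:randomproduct}'' and gives no separate proof, and your argument spells out precisely this immediate implication via the definitional equivalence between positivity of the extended Kronecker coefficient and containment of $\llambda_{(k+1)}$ in $\bigotimes_{j\le k}\llambda_{(j)}$.
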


We devote further attention to tensor powers of a single irreducible representation $\llambda$. In particular, we answer the question of what $t=t(\llambda)$ is necessary for $\llambda^{\otimes t}$ to cover $\Irrep(S_n)$. Because $S_n$ has irreducible representations of dimension $n^{\Omega(n)}$, it follows that $t\geq \Omega\left(\frac{n\log n}{\log\dim(\llambda)}\right)$ is necessary. In fact the same simple lower bound holds for any non-abelian simple group $G$ with $n\log n$ replaced by $\log |G|$; \cite{liebeck2019diameters} recently conjectured that this bound is sharp up to a universal constant factor for all irreducible representations of such $G$. \cite{liebeck2019diameters} proved this result for simple groups of Lie type and bounded rank, but described the case of the symmetric group $S_n$ as ``wide open". Our final theorem affirmatively resolves their conjecture for $S_n$.

\begin{restatable}{thm}{powersquantitative}
\label{thm:powersquantitative}

There exists an absolute constant $C$ such that for any $\llambda\vdash n$ {with $\dim(\llambda)>1$}, the tensor power $\llambda^{\otimes t}$ covers $\Irrep(S_n)$ for all $t\geq \frac{Cn\log n}{\log\dim(\llambda)}$. 

\end{restatable}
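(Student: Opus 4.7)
The natural approach is to use character theory. By Frobenius reciprocity, the multiplicity of an irreducible $\mmu \vdash n$ in $\llambda^{\otimes t}$ equals
\[m_\mmu = \frac{1}{n!}\sum_{\sigma \in S_n}\chi_\llambda(\sigma)^t\chi_\mmu(\sigma),\]
and the identity term contributes the positive quantity $\dim(\llambda)^t \dim(\mmu)/n!$. The plan is to show that the remaining $\sigma \neq e$ contributions cannot cancel this main term, i.e.
\[\sum_{\sigma \neq e}|\chi_\llambda(\sigma)|^t \, |\chi_\mmu(\sigma)| < \dim(\llambda)^t \dim(\mmu),\]
for all $\mmu \vdash n$ whenever $t \geq Cn\log n/\log\dim(\llambda)$. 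Dividing through by the right-hand side turns this into a decay estimate on normalized character ratios, and the target regime $\dim(\llambda)^t \geq n^{Cn}$ means we can afford up to a power of $n!$ in slack.

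To bound the error sum I would deploy character-ratio estimates of Larsen--Shalev, Fomin--Lulov, and Roichman, splitting $\sigma \neq e$ by support size $k = n - \text{fix}(\sigma)$. For $k \geq \alpha n$ (large support), Larsen--Shalev gives $|\chi_\llambda(\sigma)|/\dim(\llambda) \leq \dim(\llambda)^{-c_\alpha}$, so after the $t$-th power and summing over the at most $n!$ such permutations, the contribution is dominated by $n! \cdot \dim(\llambda)^{-c_\alpha t}$, which is a negative power of $n!$ once $C > 1/c_\alpha$. For $k < \alpha n$ (small support), the number of such $\sigma$ is only $\binom{n}{k}k!\, p(k)$, and a Roichman-type bound $|\chi_\llambda(\sigma)|/\dim(\llambda) \leq (\max(\lambda_1, \lambda'_1)/n)^{ck}$ provides the required decay when $\llambda$ is sufficiently balanced. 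Combining the regimes and optimizing $\alpha$ should yield the universal constant $C$.

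The main obstacle is the case $\dim(\llambda) = \mathrm{poly}(n)$, e.g.\ near-hook shapes. Here $t = \Theta(n)$ and the crude bound $|\chi_\mmu(\sigma)| \leq \dim(\mmu)$ is too wasteful: already at transpositions, the contribution $\bigl(1 - 2/(n-1)\bigr)^{Cn}\binom{n}{2}$ blows up unless $C \gtrsim \log n$. Overcoming this requires retaining the factor $|\chi_\mmu(\sigma)|/\dim(\mmu)$ and exploiting its decay at low-support $\sigma$, or equivalently invoking the general covering criterion proved earlier in the paper. For tensor powers the diagram-similarity hypotheses of that criterion are automatic (all factors coincide), and its remaining quantitative requirement should specialize precisely to the threshold $t \geq Cn\log n/\log\dim(\llambda)$, matching the trivial lower bound coming from $\sum_\mmu \dim(\mmu)^2 = n!$.
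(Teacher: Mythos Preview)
Your approach is entirely different from the paper's: the paper never estimates a single character value. Its proof is constructive, using the semigroup property to locate rectangles inside $\llambda^{\otimes t}$ at many height scales, converting them to squares via Lemma~\ref{lem:rectsquare}, applying the Fourth Power Saxl theorem and Corollary~\ref{cor:powersqualitative} scale-by-scale, and recombining. The argument splits into three cases on whether $\dim(\llambda)\geq K^n$ and on how many linear-size rows or columns $\llambda$ has; the main case alone is several pages of hook-length-formula geometry.

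There is a genuine gap in your proposal, exactly where you flag it, and neither of your proposed fixes closes it. The absolute-value character bound provably fails for low-dimensional $\llambda$: take $\llambda=(n-1,1)$ and $\mmu=1^n$. Then $|\chi_\mmu(\sigma)|/\dim(\mmu)=1$ for every $\sigma$, so ``retaining that factor'' gains nothing, and already the transposition contribution $\binom{n}{2}(n-3)^t$ exceeds the main term $(n-1)^t$ at $t=Cn$ for every fixed $C$. Your second fallback, invoking the paper's general covering criterion, does not work either: with all factors equal to $\llambda$ that criterion is exactly Corollary~\ref{cor:powersqualitative}, which yields only $t=O(n/\Rows(\llambda)^2)$. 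The paper explicitly warns that this bound ``can also be far larger than the truth''---for a rectangle $\Rows(\llambda)=1$ and the criterion gives no information at all. It does \emph{not} specialize to $n\log n/\log\dim(\llambda)$; bridging from $n/r^2$ to the dimension-based threshold is the whole content of Section~5 and requires the multi-scale rectangle decomposition, not character ratios. Indeed \cite{liebeck2019diameters}, which poses the conjecture and proves the Lie-type analogue precisely by character bounds, describes the symmetric-group case as ``wide open,'' which is strong evidence that the known $S_n$ character estimates you cite are not sharp enough to run your plan even in the large-dimension regime.
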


We give a few appealing corollaries to help interpret this result. The first follows by simple estimates using the hook-length formula (or see Lemma~\ref{lem:rectsquare}). It implies that a constant power of any Young diagram with non-degenerate limiting shape under constant-area rescaling requires $O(1)$ tensor powers to cover $\Irrep(S_n)$. The second is a uniform bound showing that $\llambda^{\otimes O(n)}$ always suffices --- it follows from the fact that all irreducible $S_n$ representations have dimension $1$ or at least $n-1$, and was established as Theorem 5 of \cite{liebeck2019diameters} as well. The third is a refinement showing that unless $\llambda$ is essentially a single row or column we obtain the much smaller bound $\llambda^{\otimes O(\log n)}$. It follows from the proof of Theorem~\ref{thm:powersquantitative}; in particular all $\llambda$ considered in cases 1 and 2 of the proof satisfy the statement, and in case 3 it follows from Corollary~\ref{cor:dimllambda}. The fourth extends Theorem~\ref{thm:powersquantitative} to multiplicity-free representations $V$, i.e. those containing each irreducible representation at most once. It follows from considering the largest irreducible subrepresentation of $V$. Indeed as explained in the introduction of \cite{liebeck2019diameters}, this extension is automatic in all simple groups, and $\Irrep(S_n),\Irrep(A_n)$ contain representations with the same dimensions up to factors of $2$ so the same argument works for $S_n$.

\begin{cor}

Fix $0<\varepsilon\leq\frac{1}{2}$. If $\llambda$ is an irreducible $S_n$ representation containing an $\Omega(n^{\varepsilon})\times \Omega(n^{1-\varepsilon})$ rectangle inside its associated Young diagram, then $\llambda^{\otimes O(1/\varepsilon)}$ covers $\Irrep(S_n)$.

\end{cor}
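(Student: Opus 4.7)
The plan is to reduce the statement to Theorem~\ref{thm:powersquantitative} by proving the dimension bound $\log\dim(\llambda)=\Omega(\varepsilon n\log n)$. Write $a=\Omega(n^{\varepsilon})$ and $b=\Omega(n^{1-\varepsilon})$ for the side lengths of the rectangle, and assume $a\leq b$, so that $ab=\Omega(n)$. Once this bound is in hand, Theorem~\ref{thm:powersquantitative} gives that $\llambda^{\otimes t}$ covers $\Irrep(S_n)$ for any $t\geq Cn\log n/\log\dim(\llambda)=O(1/\varepsilon)$, which is exactly the desired conclusion.

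First I would argue the monotonicity $\dim(\llambda)\geq \dim(a\times b)$ whenever the $a\times b$ rectangle fits inside the Young diagram of $\llambda$. A standard Young tableau of $\llambda$ whose entries $1,\dots,ab$ fill precisely the rectangle is specified by a standard tableau of the rectangle together with a standard tableau of the skew shape $\llambda/(a\times b)$ (filled with $ab+1,\dots,n$). Since the skew shape admits at least one such filling, the number of SYT of $\llambda$ is at least $\dim(a\times b)$.

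Next I would bound $\dim(a\times b)$ via the hook-length formula. Every hook length inside the $a\times b$ rectangle is at most $a+b$, so $\dim(a\times b)\geq (ab)!/(a+b)^{ab}$. Taking logs, applying Stirling's approximation, and using $ab/(a+b)\geq a/2$ (valid since $a\leq b$), one obtains
\[
\log\dim(\llambda)\;\geq\;\log\dim(a\times b)\;\geq\; ab\log\frac{ab}{a+b}-ab+O(\log n)\;\geq\; ab\bigl(\log a-O(1)\bigr).
\]
Substituting $a=\Omega(n^{\varepsilon})$ and $ab=\Omega(n)$ yields $\log\dim(\llambda)=\Omega(\varepsilon n\log n)$, with the leading $\varepsilon n\log n$ term dominating the additive $O(n)$ correction as soon as $\varepsilon\log n$ exceeds a fixed absolute constant.

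I do not anticipate any real obstacle: the argument is essentially one hook-length estimate on the rectangle followed by a substitution into Theorem~\ref{thm:powersquantitative}. The only minor subtlety is the regime where $\varepsilon\log n$ is below the threshold needed for the $O(n)$ correction to be absorbed into the $\Omega(\varepsilon n\log n)$ bound; but for any fixed $\varepsilon$ this excludes only finitely many $n$, which can be handled by enlarging the constant implicit in $O(1/\varepsilon)$.
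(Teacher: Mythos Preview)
Your proposal is correct and matches the paper's intended argument: the paper states this corollary follows ``by simple estimates using the hook-length formula,'' which is exactly what you do (monotonicity $\dim(\llambda)\geq\dim(Rect(a,b))$, then the crude hook bound $H(s)\leq a+b$, then Theorem~\ref{thm:powersquantitative}). One small clarification worth making explicit is that the rectangle may be taken in the upper-left corner (since Young diagrams are left- and top-justified), which is what makes your SYT injection valid.
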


\begin{cor}[Theorem 5 of \cite{liebeck2019diameters}]

If $\llambda$ is an irreducible $S_n$ representation with $\dim(\llambda)>1$ then $\llambda^{\otimes O(n)}$ covers $\Irrep(S_n)$.

\end{cor}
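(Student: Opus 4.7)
The plan is to reduce the corollary directly to Theorem~\ref{thm:powersquantitative} via the classical dimension gap for irreducible representations of $S_n$. Theorem~\ref{thm:powersquantitative} guarantees that $\llambda^{\otimes t}$ covers $\Irrep(S_n)$ as soon as $t\geq \frac{Cn\log n}{\log\dim(\llambda)}$, so the entire task is to show that under the hypothesis $\dim(\llambda)>1$ this threshold is $O(n)$. Equivalently, I need a lower bound $\log\dim(\llambda)=\Omega(\log n)$.

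The key input is the well-known fact that the only irreducible representations of $S_n$ with dimension $1$ are the trivial and sign representations (corresponding to the Young diagrams $(n)$ and $(1^n)$); every other irreducible has dimension at least $n-1$, with equality achieved by the standard representation $(n-1,1)$ and its sign twist $(2,1^{n-2})$. This is immediate from the hook-length formula (or from a short direct argument on $S_n$-character values), so I would just cite it.

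With this fact in hand, the proof is a one-line calculation. For $n$ sufficiently large and any $\llambda\vdash n$ with $\dim(\llambda)>1$, we have $\log\dim(\llambda)\geq \log(n-1)\geq \tfrac{1}{2}\log n$, so
\[
\frac{Cn\log n}{\log\dim(\llambda)}\;\leq\;\frac{2Cn\log n}{\log n}\;=\;2Cn.
\]
Thus taking $t\geq 2Cn$ suffices and we obtain the $O(n)$ bound. The finitely many small values of $n$ (where $n-1$ may be $\leq 1$) can be absorbed into the implicit constant, since each such case requires only a bounded number of tensor factors.

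I do not expect any real obstacle: the content of the corollary is entirely contained in Theorem~\ref{thm:powersquantitative}, and the dimension gap $\dim(\llambda)\in\{1\}\cup[n-1,\infty)$ is a standard elementary fact about $\Irrep(S_n)$. The only subtlety worth flagging is that the same argument works \emph{because} the lower bound $n-1$ is polynomial in $n$, which is precisely what makes $\log\dim(\llambda)$ comparable to $\log n$ and hence cancels against the $\log n$ factor in the numerator of Theorem~\ref{thm:powersquantitative}'s threshold.
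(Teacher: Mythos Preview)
Your proposal is correct and matches the paper's own argument essentially verbatim: the paper derives this corollary from Theorem~\ref{thm:powersquantitative} using exactly the dimension gap $\dim(\llambda)\in\{1\}\cup[n-1,\infty)$ that you invoke. There is nothing to add.
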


\begin{cor}

If $\llambda$ has first row and first column each of length at most $n(1-\Omega(1))$, then $\llambda^{\otimes O(\log \, n)}$ covers $\Irrep(S_n)$.

\end{cor}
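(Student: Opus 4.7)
The plan is to combine Theorem~\ref{thm:powersquantitative} with a dimension lower bound on $\llambda$. Theorem~\ref{thm:powersquantitative} ensures that $\llambda^{\otimes t}$ covers $\Irrep(S_n)$ whenever $t \geq C n \log n / \log \dim(\llambda)$, so to obtain $t = O(\log n)$ it suffices to show $\log \dim(\llambda) = \Omega(n)$ under the hypothesis that $\lambda_1, \lambda_1' \leq (1-c) n$ for some fixed $c > 0$. The corollary then follows immediately from this estimate.

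For the dimension estimate I would split on the Durfee size $d = d(\lambda)$. When $d \geq c_0 \sqrt{n}$ for a suitable constant $c_0$, the $d \times d$ Durfee square is a subpartition of $\lambda$, so iterated branching yields $\dim \lambda \geq \dim(d \times d) \geq (d/(2e))^{d^2} = \exp(\Omega(n \log n))$ by the hook-length formula, which is far more than needed. When $d < c_0 \sqrt{n}$, $\lambda$ is a ``thick hook'' with arm partition $\mu$ and leg partition $\nu$ flanking the Durfee square, and restricting to SYTs that place $\{1, \dots, d^2\}$ inside the Durfee square gives
\[
\dim(\lambda) \;\geq\; \dim(d \times d) \cdot \binom{n - d^2}{|\mu|} \cdot \dim(\mu) \cdot \dim(\nu).
\]
If both $|\mu|, |\nu|$ are $\Omega(n)$ the binomial factor alone contributes $\exp(\Omega(n))$ by entropy. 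Otherwise one side dominates, WLOG $|\mu| \geq n - o(n)$ and $|\nu| = o(n)$; then $\mu$ itself is a partition of size $\Theta(n)$ with $\mu_1 \leq (1-c)n$ and only $O(\sqrt n)$ rows, and one applies an induction on $|\lambda|$ (or a direct hook-length analysis in the rectangle-like regime) to bound $\dim \mu \geq \exp(\Omega(n))$.

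The main obstacle is the small-Durfee, unbalanced regime, where the induction or direct analysis of $\dim \mu$ requires care to track constants cleanly. A clean extremal check is the pure hook $\lambda = (a, 1^{n-a})$ with $cn \leq a \leq (1-c)n$: here $\dim \lambda = \binom{n-1}{a-1} \geq \exp(H(c)(n-1))$ directly by the entropy bound, exactly as needed. As the text preceding the corollary notes, this dimension estimate is already embedded in the proof of Theorem~\ref{thm:powersquantitative}: cases 1 and 2 of that proof automatically yield the $O(\log n)$ bound, while case 3 invokes the required dimension estimate in Corollary~\ref{cor:dimllambda}.
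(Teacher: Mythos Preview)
Your final paragraph is correct and matches the paper's own argument exactly: one inspects the three cases in the proof of Theorem~\ref{thm:powersquantitative}. Cases 1 and 2 already produce the $O(\log n)$ covering bound directly, and in Case 3 the hypothesis $\lambda_1\le (1-c)n$ forces $k=|\hat\llambda|\ge cn$, whence Corollary~\ref{cor:dimllambda} gives $\log\dim(\llambda)\ge\Omega(k\log(n/k))=\Omega(n)$, and Theorem~\ref{thm:powersquantitative} finishes.

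The Durfee-square route you sketch in the middle is a genuinely different strategy (a standalone proof of $\log\dim(\llambda)=\Omega(n)$), but as written it has a real gap in the unbalanced small-Durfee regime, which you yourself flag. The induction does not close: passing from $\llambda$ to the arm $\mu$ gives $\mu_1\le (1-c')|\mu|$ only for some $c'$ depending on your auxiliary constants $c_0,c_1$, and without care the constant degrades at each step. The vague fallback to ``a direct hook-length analysis in the rectangle-like regime'' is exactly the missing content. Note also that the paper's Case 2 is \emph{not} a dimension argument --- it proves the $O(\log n)$ covering directly via Lemmas~\ref{lem:rectsquare} and~\ref{lem:2row} (the dimension lower bound is mentioned only as a byproduct). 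So your framing ``the dimension estimate is embedded in the proof'' is slightly off for that case, though the conclusion you draw from it is fine.
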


\begin{cor}

There exists an absolute constant $C$ such that for any multiplicity-free representation $V$ of $S_n$ with $\dim(V)\geq 3$, the tensor power $V^{\otimes t}$ covers $\Irrep(S_n)$ for all $t\geq \frac{Cn\log n}{\log\dim(V)}$. 

\end{cor}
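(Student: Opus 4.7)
My plan is to reduce to Theorem~\ref{thm:powersquantitative} by passing to the largest irreducible component of $V$. Since $V$ is multiplicity-free I will write $V = \bigoplus_i \mu_i$ with the $\mu_i$ distinct irreducibles and let $\llambda$ be one of maximal dimension $M = \dim(\llambda)$. Then $\llambda^{\otimes t} \subseteq V^{\otimes t}$, so by Theorem~\ref{thm:powersquantitative} the tensor power $V^{\otimes t}$ covers $\Irrep(S_n)$ whenever $t \geq \frac{C_0 n\log n}{\log M}$. The task therefore reduces to establishing an inequality of the form $\log M \geq c_0 \log \dim(V) - O(1)$ for a universal $c_0 > 0$, after which I just take $C = C_0/c_0$ (plus slack for the additive $O(1)$).

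To compare $M$ and $\dim(V)$, I will let $N(M)$ denote the number of irreducibles of $S_n$ of dimension at most $M$, so that multiplicity-freeness gives $\dim(V) \leq N(M)\cdot M$. The key input I plan to use is a polynomial bound of the form $N(M) \leq C_1 M^s$ for absolute constants $C_1, s > 0$. This is equivalent to uniform boundedness of the Witten zeta function $\sum_\mu \dim(\mu)^{-s}$ for $S_n$ and is well known; the excerpt's hint also notes the deduction is ``automatic in all simple groups,'' referring to \cite{liebeck2019diameters}, and I expect either to cite this directly or to spell out a short self-contained version using the fact that the dimensions of $S_n$-irreducibles grow polynomially in $n$ with the level. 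Substituting into the multiplicity-free bound gives $\dim(V) \leq C_1 M^{1+s}$, hence $\log M \geq \frac{1}{1+s}\log \dim(V) - O(1)$, which is the desired linear comparison as soon as $\log \dim(V)$ exceeds a universal constant.

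The remaining regime is $\dim(V) = O(1)$, where the target threshold $\frac{Cn\log n}{\log \dim(V)}$ degenerates to $\Theta(n\log n)$. Here the hypothesis $\dim(V) \geq 3$ is crucial: since the only one-dimensional irreducibles of $S_n$ are the trivial and sign representations, which together have total dimension $2$, $V$ must contain some non-trivial irreducible, necessarily of dimension at least $n-1$. Applying Theorem~\ref{thm:powersquantitative} to this component shows $V^{\otimes t}$ already covers $\Irrep(S_n)$ for $t \geq \frac{C_0 n\log n}{\log(n-1)} = O(n)$, which is comfortably within the $\Theta(n\log n)$ budget.

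The principal obstacle is producing the polynomial bound $N(M) \leq M^{O(1)}$; the rest of the argument is bookkeeping. Since the remark in the excerpt indicates this step is standard (and transfers from $A_n$ to $S_n$ using that their irreducible dimensions agree up to factors of $2$), I anticipate the main subtlety will simply be picking the constants $C_0, c_0, C_1, s$ in the correct order so that the final constant $C$ is truly absolute and not silently $n$-dependent.
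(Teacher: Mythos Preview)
Your proposal is correct and follows essentially the same approach as the paper: both reduce to the largest irreducible constituent $\llambda$ of $V$ and then invoke the polynomial representation-growth (Witten zeta) bound from \cite{liebeck2019diameters} to compare $\log\dim(\llambda)$ with $\log\dim(V)$. The paper simply cites that reference (and notes the transfer from $A_n$ to $S_n$ via dimensions agreeing up to factors of $2$), whereas you spell out the inequality $\dim(V)\leq N(M)\cdot M\leq C_1 M^{1+s}$ and the edge case $\dim(V)=O(1)$ explicitly.
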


Our proofs rely crucially on two results. The first is the previously mentioned Theorem~1.4 of \cite{LuoSellke} which states that $\vvarrho_r^{\otimes 4}$ covers $\Irrep(S_n)$. This may be surprising as the results of the present paper make no explicit mention of the staircase. The second is the \emph{semigroup property} of positive Kronecker coefficients which allows us to combine tensor information from $S_k,S_{\ell}$ into information on $S_{k+\ell}$. This was also the main tool of \cite{LuoSellke} as well as \cite{li2018}. A key idea is that Young diagrams with many distinct rows contain a staircase in an appropriate sense (see Proposition~\ref{prop:commonrows} for details). The semigroup property allows us to relate tensor products of these staircases to tensor products of the larger irreducible representations. Our use of the semigroup property is rather different from prior work, and the result of \cite{LuoSellke} essentially serves as a finishing step in our arguments. 

{
\begin{rem}
The conjecture of \cite{liebeck2019diameters} was for simple groups, which include the alternating groups $A_n$ but not $S_n$. Subsequent to our work, \cite{liebeck2021mckay} used Theorem~\ref{thm:powersquantitative} and induction/restriction arguments to deduce the same result for $A_n$. The extension is not immediate because self-conjugate $\llambda\vdash n$ split into pairs of irreducible $A_n$ representations.
\end{rem}
}

\section{Background}

In this section we give relevant definitions and prior results, largely overlapping with \cite{LuoSellke}. A notable new parameter is the number of distinct row lengths of a Young diagram, which did not appear in that work but is key here.

\subsection{Notations}

Throughout we use the terms ``irreducible representation", ``Young diagram", and ``partition" essentially interchangeably. We mean by $\llambda=(a_1,\dots,a_k)\vdash n$ that $\llambda$ has row lengths $a_1\geq a_2\geq\dots\geq a_k\geq 0$ summing to $n$, and also write $|\llambda|=n$. We denote by $\mathcal Y_n$ the set of Young diagrams with $n$ boxes. We denote by $\llambda'$ the conjugate Young diagram of $\llambda$. We denote by $1_n$ the trivial representation or horizontal strip and $1^n$ the alternating representation or vertical strip. We denote by $\vvarrho_r=(r,r-1,\dots,1)\vdash \binom{r+1}{2}$ the staircase and $\ttau_n=(n)\oplus (n-1,1)$ the (reducible) \emph{standard representation}. We set $\Rect(a,b)=(a,a,\dots,a)\vdash ab$ to be the rectangle with $a$ columns and $b$ rows and $\Hook(a,b)=(a,1,1,\dots,1)\vdash a+b-1$ the diagram with a row of length $a$ and column of length $b$. 

We will consider two probability measures on $\mathcal Y_n$: the self-explanatory \emph{uniform measure} and the algebraically natural \emph{Plancherel measure} which assigns probability $\frac{(\dim\llambda)^2}{n!}$ to each $\llambda\vdash n$. 
% We will only use properties of these measures which appear or are easily deduced from prior work.

\begin{figure}[h]
\begin{framed}
\begin{align*}
1_5&=\ydiagram{5}\quad\quad\quad1^3=\ydiagram{1,1,1}\quad& \ttau_4=\ydiagram{3,1}\oplus\ydiagram{4}  \\
\vvarrho_5&=\ydiagram{5,4,3,2,1}\quad &\Rect(5,4)=\ydiagram{5,5,5,5}\\
\Hook(7,4)&=\ydiagram{7,1,1,1}\quad &(10,6,4)=\ydiagram{10,6,4}
\end{align*}

\caption{Illustrations of our notations for various $S_n$ representations.}
\end{framed}
\end{figure}

If $V$ is a possibly reducible $S_n$-representation we will often identify $V$ with its set of irreducible subrepresentations and write $\llambda\in V$ or even $W\subseteq V$ accordingly.

\subsection{Kronecker Coefficients and Relations}

We recall the \emph{Kronecker coefficients} $g\big(\llambda,\mmu,\nnu\big)\geq 0$ which are given for $\llambda,\mmu,\nnu\vdash n$ by
{
\[
	g(\llambda,\mmu,\nnu)=\langle \chi^{\llambda},\chi^{\mmu}\chi^{\nnu}\rangle.
\]
Here $\chi$ denotes the character; note that $g\big(\llambda,\mmu,\nnu\big)$ is symmetric in its $3$ arguments.
} 
By the nature of our results, we are only concerned with whether certain Kronecker coefficients vanish rather than their actual values. Therefore as in \cite{LuoSellke} we adopt the following notation to indicate that a Kronecker coefficient is positive.

\begin{defn}

Let $c\big(\llambda,\mmu,\nnu\big)$ denote the statement that the Kronecker coefficient $g\big(\llambda,\mmu,\nnu\big)$ is positive, or equivalently that $\llambda\otimes \mmu\otimes \nnu$ contains the trivial representation $1_n$ in its direct sum decomposition. More generally, let $c\big(\llambda_{(1)},\llambda_{(2)},\dots,\llambda_{(k)}\big)$ denote the statement that $\bigotimes_{i=1}^k \llambda_{(i)}$ contains $1_n$ in its direct sum decomposition, i.e. that the corresponding extended Kronecker coefficient (as defined in the introduction) is positive.

\end{defn}

We call a statement $c\big(\llambda_{(1)},\llambda_{(2)},\dots,\llambda_{(k)}\big)$ a \emph{Kronecker relation}. Identifying arbitrary representations with subsets of irreducibles, we may equivalently write $c\big(\llambda,\mmu,\nnu\big)$ as $\llambda\in \mmu\otimes \nnu$. In following subsections we will give general criteria for Kronecker relations to hold. For now we point out the simple but crucial fact that overlapping Kronecker relations can be combined. For instance, the three relations
\[
c\big(\llambda_{(1)},\llambda_{(2)},\llambda_{(3)}\big)
,\quad 
c\big(\mmu_{(1)},\mmu_{(2)},\mmu_{(3)}\big)
,\quad 
c\big(\llambda_{(3)},\mmu_{(3)},\nnu\big)
\] 
imply the further relation 
\[
c\big(\llambda_{(1)},\llambda_{(2)},\mmu_{(1)},\mmu_{(2)},\nnu\big).
\] 
When we wish to emphasize such a step we may phrase the first three relations as 
\[
\llambda_{(3)}\in \llambda_{(1)}\otimes \llambda_{(2)},\quad \mmu_{(3)}\in  \mmu_{(1)}\otimes \mmu_{(2)},\quad \nnu\in \llambda_{(3)}\otimes \mmu_{(3)}
\]
so that the implication
\[
\nnu\in \llambda_{(1)}\otimes\llambda_{(2)}\otimes\mmu_{(1)}\otimes\mmu_{(2)}
\]
follows simply because $V\subseteq \tilde V, W\subseteq\tilde W\implies V\otimes W\subseteq \tilde V\otimes \tilde W$ for any (possibly reducible) representations $V,W,\tilde V,\tilde W$.

\subsection{Criteria for Kronecker Relations}

As in \cite{LuoSellke}, we make crucial use of the $\textit{semigroup property}$ of \cite{semigp}. To state this, we first define the \textit{horizontal sum} of partitions. This operation adds row lengths, or equivalently forms the disjoint union of the column-length multisets.

\begin{defn}
The \textit{horizontal sum} $\llambda_{(1)}\Hplus\llambda_{(2)}$ of partitions $\llambda_{(1)}=(a_1,a_2,\dots ) \vdash n_1$ and $\llambda_{(2)} =(b_1, b_2 \dots) \vdash n_2$ is $\llambda_{(1)}\Hplus\llambda_{(2)}:=(a_1+b_1, a_2+b_2,  \dots ) \vdash (n_1+n_2)$. We denote longer sums by
\[
\Hsum_{i=1}^k \llambda_{(i)} = \llambda_{(1)}\Hplus\llambda_{(2)}\Hplus\dots\Hplus\llambda_{(k)}.
\]
This operation is well-defined because $\Hplus$ is commutative and associative.
\end{defn}

Vertical addition is denoted by $\Vplus,\Vsum$ and defined analogously.
\begin{defn}
The \textit{vertical sum} $\llambda_{(1)}\Vplus\llambda_{(2)}$ of $\llambda_{(1)},\llambda_{(2)}$ is the partition formed by unioning the row-length multisets of $\llambda_{(1)},\llambda_{(2)}$. Equivalently, $\llambda_{(1)}\Vplus\llambda_{(2)}=(\llambda_{(1)}'\Hplus\llambda_{(2)}')'$. Similarly define $\Vsum$.
\end{defn}

\begin{figure}[h]
\begin{framed}
\noindent\begin{minipage}{.5\linewidth}
\[\ydiagram{3,2}\Hplus \ydiagram{4,2,1} = \ydiagram{7,4,1}.\]
\end{minipage}%
\begin{minipage}{.5\linewidth}
\begin{align*}&\ydiagram{3,2}\\ 
&\quad\quad\Vplus\\
&\equalto{\ydiagram{4}}{\ydiagram{4,3,2}}
\end{align*}
\end{minipage}

\caption{Examples of horizontal and vertical sums. Horizontal summation can be defined by adding row lengths, or equivalently by unioning column-length multisets. }
\end{framed}

\end{figure}

We now state the \textit{semigroup property}. The $k=3$ case was proved in \cite{semigp} and it immediately implies the version for larger $k$ as observed in \cite{LuoSellke}.

{
\begin{thm} [Semigroup Property, {\cite[Theorem~3.1]{semigp},\cite[Lemma~C.1]{LuoSellke}}]
\label{thm:Hsum}
If both 
\[
	c\big(\llambda_{(1)}, \llambda_{(2)}, \llambda_{(3)},\dots,\llambda_{(k)}\big),\quad 
	c\big(\mmu_{(1)},\mmu_{(2)}, \mmu_{(3)},\dots,\mmu_{(k)}\big)
\]
hold then we also have
\[
	c\big(\llambda_{(1)}\Hplus\mmu_{(1)}, \llambda_{(2)}\Hplus\mmu_{(2)},\dots, \llambda_{(k)} \Hplus \mmu_{(k)}\big).
\] 

\end{thm}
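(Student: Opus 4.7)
The plan is to establish the result first for $k=3$ and then bootstrap to arbitrary $k$ by induction on $k$. For the inductive step, I use the composition principle for Kronecker relations discussed in the preceding subsection: $c(\llambda_{(1)},\dots,\llambda_{(k)})$ is equivalent to the existence of some $\nnu\vdash n$ with $g(\llambda_{(1)},\llambda_{(2)},\nnu)>0$ and $c(\nnu,\llambda_{(3)},\dots,\llambda_{(k)})$. Picking such witnesses $\nnu$ and $\nnu'$ for the two given relations, the $k=3$ case applied to the triples $(\llambda_{(1)},\llambda_{(2)},\nnu)$ and $(\mmu_{(1)},\mmu_{(2)},\nnu')$ produces $g(\llambda_{(1)}\Hplus\mmu_{(1)},\llambda_{(2)}\Hplus\mmu_{(2)},\nnu\Hplus\nnu')>0$, while the inductive hypothesis on the $(k-1)$-tuples $(\nnu,\llambda_{(3)},\dots,\llambda_{(k)})$ and $(\nnu',\mmu_{(3)},\dots,\mmu_{(k)})$ yields $c(\nnu\Hplus\nnu',\llambda_{(3)}\Hplus\mmu_{(3)},\dots,\llambda_{(k)}\Hplus\mmu_{(k)})$. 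Composing these two relations gives the conclusion.

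For the $k=3$ base case, I would work inside the polynomial algebra $R=\mathrm{Sym}^{\bullet}(\mathbb{C}^a\otimes\mathbb{C}^b\otimes\mathbb{C}^c)$, where $a,b,c$ are chosen larger than the lengths of any partition that will appear. A double application of the Cauchy identity, together with the $S_n$-decomposition of Kronecker products, expresses the degree-$n$ piece as
\[ R_n \;\cong\; \bigoplus_{\llambda,\mmu,\nnu\vdash n} g(\llambda,\mmu,\nnu)\,V_{\llambda}(\mathbb{C}^a)\otimes V_{\mmu}(\mathbb{C}^b)\otimes V_{\nnu}(\mathbb{C}^c) \]
as a $GL_a\times GL_b\times GL_c$-module, where $V_{\llambda}(\mathbb{C}^a)$ is the Schur functor associated to $\llambda$. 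Consequently $g(\llambda,\mmu,\nnu)>0$ if and only if $R$ contains a nonzero \emph{triple highest weight vector} of tri-weight $(\llambda,\mmu,\nnu)$---that is, a polynomial of the prescribed torus weight annihilated by the raising operators of each of the three $\mathfrak{gl}$-summands.

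Given highest weight vector witnesses $f$ of tri-weight $(\llambda_{(1)},\llambda_{(2)},\llambda_{(3)})$ and $f'$ of tri-weight $(\mmu_{(1)},\mmu_{(2)},\mmu_{(3)})$ for the two hypothesized positivities, I would form the product $ff'\in R$. Torus weights add under multiplication, so the tri-weight of $ff'$ is the componentwise sum $(\llambda_{(1)}\Hplus\mmu_{(1)},\llambda_{(2)}\Hplus\mmu_{(2)},\llambda_{(3)}\Hplus\mmu_{(3)})$. Each raising operator acts as a derivation on $R$, so by the Leibniz rule $ff'$ is again annihilated by all such operators. Finally $ff'\neq 0$ because $R$ is an integral domain. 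Hence $ff'$ is a nonzero triple highest weight vector of the required tri-weight, exhibiting $g(\llambda_{(1)}\Hplus\mmu_{(1)},\llambda_{(2)}\Hplus\mmu_{(2)},\llambda_{(3)}\Hplus\mmu_{(3)})>0$ and completing the base case.

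The main obstacle is setting up the polynomial-ring framework correctly: verifying the triple-tensor Schur--Weyl decomposition above and the standard fact that the $\mathfrak{gl}$-factors act on $R$ by derivations. Once that is in place, the semigroup property reduces to the observation that the dominant cone of weights is closed under addition and that the polynomial ring is an integral domain, so positivity propagates automatically through the multiplication of highest weight vectors.
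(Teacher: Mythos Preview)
Your proof is correct. Note, however, that the paper does not supply its own proof of this theorem: it simply quotes the result, remarking that the $k=3$ case is Theorem~3.1 of \cite{semigp} and that the extension to general $k$ was observed in \cite{LuoSellke}. Your $k=3$ argument via triple highest weight vectors in $\mathrm{Sym}^\bullet(\mathbb{C}^a\otimes\mathbb{C}^b\otimes\mathbb{C}^c)$ is exactly the proof given in \cite{semigp}, and your inductive bootstrap to larger $k$ by splitting off a witness $\nnu$ and composing Kronecker relations is precisely the elementary reduction recorded in \cite[Lemma~C.1]{LuoSellke}. So your proposal is both correct and faithful to the sources the paper invokes.
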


\begin{cor} \label{cor:vsum}
If both 
\[
	c\big(\llambda_{(1)}, \llambda_{(2)}, \llambda_{(3)},\dots,\llambda_{(k)}\big),\quad 
	c\big(\mmu_{(1)},\mmu_{(2)}, \mmu_{(3)},\dots,\mmu_{(k)}\big)
\]
hold then we also have
\[
	c\big(\llambda_{(1)}\Vplus\mmu_{(1)},\dots \llambda_{(2j)}\Vplus\mmu_{(2j)},\llambda_{(2j+1)}\Hplus\mmu_{(2j+1)},\dots, \llambda_{(k)} \Hplus \mmu_{(k)}\big).
\]
\end{cor}
}

\begin{proof}

Since conjugating pairs does not affect Kronecker coefficients we have 
\[
	c\big(\llambda'_{(1)},\dots,\llambda'_{(2j)},\llambda_{(2j+1)},\dots,\llambda_{(k)}\big),\quad
	c\big(\mmu'_{(1)},\dots,\mmu'_{(2j)},\mmu_{(2j+1)},\dots,\mmu_{(k)}\big).
\]
Applying Theorem~\ref{thm:Hsum} and conjugating the first $2j$ entries back yields the result.
\end{proof}

Corollary~\ref{cor:vsum} states that in using the semigroup property we are allowed to use an even number of vertical additions in each step. It is \textit{not} true that vertically adding all 3 partitions preserves constituency. For example, $c\big((1),(1),(1)\big)$ holds for the trivial representation of $S_1$, but vertically adding this to itself would give a false statement since the alternating representation of $S_2$ is not contained in its own tensor square.

\begin{figure}

\begin{framed}

\[c\begin{pmatrix}\ydiagram{3,2,1}\\ \\ \ydiagram{6,0} \\ \\ \ydiagram{3,2,1}\end{pmatrix}\Hplus c\begin{pmatrix}\ydiagram{4}\\ \\ \ydiagram{1,1,1,1} \\ \\ \ydiagram{1,1,1,1}\end{pmatrix}=c\begin{pmatrix}\ydiagram{7,2,1}\\ \\ \ydiagram{7,1,1,1}\\ \\ \ydiagram{4,3,2,1}\end{pmatrix}\]

\caption{An example of the semigroup property. Horizontally summing the Young diagrams in the first two Kronecker relations yields a third Kronecker relation which is far less apparent. }
\end{framed}
\end{figure}

We will sometimes use the basic result that the tensor product $\llambda\otimes \ttau_n$ of $\llambda$ with the standard representation contains exactly the Young diagrams within blockwise distance $1$ of $\llambda$. Finally we mention an interesting and useful result from \cite{TensorCube}.

\begin{lem}[\cite{TensorCube}]
\label{lem:tensorcube} 
If $\llambda=\llambda'$ is symmetric then $c\big(\llambda,\llambda,\llambda\big)$ holds.
\end{lem}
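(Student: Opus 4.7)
The plan is to exploit the defining feature of self-conjugate partitions at the level of characters. Since $\chi^{\llambda'} = \mathrm{sgn}\cdot\chi^\llambda$, having $\llambda = \llambda'$ forces $\chi^\llambda(\sigma) = 0$ on every odd permutation. Combined with the Frobenius formula, this reduces the goal to
\[g(\llambda,\llambda,\llambda) \;=\; \frac{1}{n!}\sum_{\sigma \in A_n} \chi^\llambda(\sigma)^3 \;>\; 0,\]
a strict positivity statement for the third moment of $\chi^\llambda$ over the alternating group.

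From here I would pursue two complementary avenues. The first is the restriction $\chi^\llambda|_{A_n}$, which splits as $\chi^+ + \chi^-$ into two distinct irreducible $A_n$-characters precisely because $\llambda = \llambda'$; expanding the cube then exhibits $2g(\llambda,\llambda,\llambda)$ as a non-negative integer combination of four $A_n$-Kronecker coefficients with multiplicities $1,3,3,1$, reducing the problem to showing at least one of them is strictly positive. The second is the decomposition $\llambda\otimes\llambda = \mathrm{Sym}^2(\llambda)\oplus \wedge^2(\llambda)$. For self-conjugate $\llambda$ both $1_n$ and $1^n$ occur in $\llambda \otimes \llambda$ with multiplicity exactly one (via $\langle 1^n,\llambda\otimes\llambda\rangle = \langle \llambda',\llambda\rangle = 1$), and the Frobenius--Schur indicator together with its twisted variant $\frac{1}{n!}\sum_\sigma \mathrm{sgn}(\sigma)\chi^\llambda(\sigma^2)$ would determine which of the two summands each of them occupies. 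These considerations extract useful structural information but do not yet certify that $\llambda$ itself appears.

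The main obstacle is precisely this last step: showing that $\llambda$, not only $1_n$ and $1^n$, is a constituent of $\llambda \otimes \llambda$. My plan for this would be an induction on the principal hook decomposition, since any self-conjugate diagram of Durfee size $d$ is a nested union of $d$ symmetric hooks $H_{k_i} = \mathrm{Hook}(k_i,k_i)$. The base case $c(H_k,H_k,H_k)$ is handled by a direct Murnaghan--Nakayama calculation — for instance, one computes $\chi^{(3,1,1)} = (6,0,-2,0,0,0,1)$ on the conjugacy classes of $S_5$, giving $g((3,1,1),(3,1,1),(3,1,1)) = 1$, and the same strategy works for general $H_k$. The genuinely subtle step, which I expect constitutes the technical core of \cite{TensorCube}, is combining the hook-level invariants into an $S_n$-invariant element of $\llambda^{\otimes 3}$: because the principal hooks are nested rather than horizontally disjoint, the semigroup property (Theorem~\ref{thm:Hsum}) does not combine them directly, and a more delicate combinatorial gluing — perhaps routed through the $2$-core/$2$-quotient of $\llambda$, both of which take a particularly clean form for self-conjugate partitions — is required to finish the argument.
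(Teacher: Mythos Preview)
The paper does not prove this lemma at all; it is quoted as an external result from \cite{TensorCube} and used as a black box throughout. There is therefore no in-paper argument to compare your sketch against.

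Your write-up is candid about its own incompleteness, and the gap you flag is genuine. The reductions you record --- vanishing of $\chi^{\llambda}$ on odd permutations, the splitting $\chi^{\llambda}|_{A_n}=\chi^{+}+\chi^{-}$, and the $\mathrm{Sym}^2\oplus\wedge^2$ decomposition --- are all correct and do express $2g(\llambda,\llambda,\llambda)$ as a non-negative integer combination of $A_n$-Kronecker coefficients. But nothing in what you have written forces any summand to be strictly positive, and the principal-hook induction you propose cannot be executed via Theorem~\ref{thm:Hsum} for exactly the reason you state: nested hooks are not $\Hplus$-summands of $\llambda$, so the semigroup property does not combine the base cases $c(H_k,H_k,H_k)$ into $c(\llambda,\llambda,\llambda)$. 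What you have is a sequence of valid reformulations that stops short of the key positivity step; since the paper treats the lemma as imported, a self-contained proof would require actually carrying out the argument of \cite{TensorCube} rather than outlining where one expects it to live.
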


{
\subsection{Blockwise Distance}

Define the \emph{blockwise distance} $d(\llambda,\mmu)$ between $\llambda\vdash n$ and $\mmu\vdash n$ to be the smallest number of squares which must be moved from $\llambda$ to transform it into $\mmu$. It will be useful that block-wise distance is subadditive under horizontal summation:
\[
	d(\llambda_{(1)}\Hplus \llambda_{(2)},\mmu_{(1)}\Hplus\mmu_{(2)})\leq d(\llambda_{(1)},\mmu_{(1)})+d(\llambda_{(2)},\mmu_{(2)}).
\]
Further, we call $\frac{d(\llambda,\mmu)}{n}$ the \emph{rescaled blockwise distance}. The original blockwise distance is equivalent to the $L^1$ norm on the indicator functions of the Young diagrams viewed as a subset of the plane, and the rescaled block distance corresponds to the same norm upon dilating each $\mathcal Y_n$ by a factor of $n^{-1/2}$. Using this equivalent definition, we extend the rescaled blockwise distance to \emph{continuous} Young diagrams, i.e. negative, increasing cadlag functions $f:\mathbb R_+\to \mathbb R_-$ with total integral $-1$. See \cite{LuoSellke} for a longer discussion of continuous Young diagrams.
}

\subsection{Numbers of Distinct Row Lengths}

 Some of our results require that Young diagrams contain many distinct row lengths, or even many distinct shared row lengths. Here we give two simple results on this statistic.

\begin{defn}

Let $\Rows(\llambda)$ be the number of distinct row lengths of $\llambda$, and $\Rows(\llambda,\mmu)$ the number of \textbf{shared} distinct row lengths of $\llambda,\mmu$.

\end{defn}

\begin{prop}

Any partition $\llambda$ has exactly $\Rows(\llambda)$ distinct column lengths. Furthermore $\Rows(\llambda\Hplus\mmu)\geq \Rows(\llambda)$.

\end{prop}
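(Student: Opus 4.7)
The plan is to prove the two claims separately; both are fundamentally bookkeeping exercises about the partition ordering and the conjugation operation.

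For the first claim, I would exhibit an explicit bijection between distinct row lengths and distinct column lengths of $\llambda$. Writing $\llambda = (a_1, a_2, \dots, a_k)$ with $a_1 \geq a_2 \geq \dots \geq a_k > 0$, the map I propose is
\[
a \;\longmapsto\; \#\{i : a_i \geq a\},
\]
taking each distinct row length $a$ to a column length (namely $\llambda'_a$). To see injectivity, note that if $a > a'$ are both attained row lengths of $\llambda$, then some index $i$ has $a_i = a'$, and this index is counted in the preimage of $a'$ but not of $a$; hence the map is strictly decreasing in $a$. For surjectivity, any column length has the form $\llambda'_j = \#\{i : a_i \geq j\}$ for some $j \leq a_1$; letting $a = \min\{a_i : a_i \geq j\}$, which is an attained row length, the sets $\{i : a_i \geq j\}$ and $\{i : a_i \geq a\}$ coincide, so $a$ maps to $\llambda'_j$.

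For the second claim, let $\llambda$ have weakly decreasing row lengths $a_1 \geq a_2 \geq \dots$ and $\mmu$ have $b_1 \geq b_2 \geq \dots$ (padded by zeros to a common length). The key observation is monotonicity: whenever $i < j$ and $a_i > a_j$, the sorted property of $\mmu$ gives $b_i \geq b_j$, hence
\[
a_i + b_i \;>\; a_j + b_j.
\]
Now enumerate the distinct row lengths of $\llambda$ as $v_1 > v_2 > \dots > v_r$, where $r = \Rows(\llambda)$, and for each $s$ let $i_s$ be the smallest index with $a_{i_s} = v_s$; these indices satisfy $i_1 < i_2 < \dots < i_r$. Applying the previous display, the values $a_{i_s} + b_{i_s}$ are strictly decreasing in $s$, so there are at least $r$ distinct row lengths in $\llambda \Hplus \mmu$, as required.

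I do not anticipate a serious obstacle; the only thing to watch out for is the zero-padding convention when $\llambda$ and $\mmu$ have different numbers of rows, which is harmless since adding extra zero rows does not introduce new distinct row lengths on either side. Both claims then follow immediately from the bijection and the monotonicity argument above.
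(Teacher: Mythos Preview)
Your proof is correct, but it takes a different route from the paper on both parts. For the first claim, the paper simply observes that the boundary of the Young diagram is a staircase of alternating horizontal and vertical segments, so each ``inner corner'' (where a new distinct row length appears) is in bijection with each ``outer corner'' (where a new distinct column length appears); you instead write down an explicit bijection $a \mapsto \llambda'_a$ and verify injectivity and surjectivity by hand. For the second claim, the paper uses the first part together with the characterization of $\Hplus$ as union of column-length multisets: the distinct column lengths of $\llambda$ survive in $\llambda \Hplus \mmu$, hence so does $\Rows$. You bypass this entirely and argue directly on row lengths via the monotonicity $a_i > a_j, \, b_i \geq b_j \Rightarrow a_i + b_i > a_j + b_j$. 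Your argument is slightly longer but has the minor advantage of not invoking the column-multiset description of $\Hplus$ or the first part of the proposition; the paper's argument is terser and more conceptual, and the column-multiset viewpoint is exactly what is used again in Proposition~\ref{prop:commonrows}.
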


\begin{proof}

The first part is because the boundary of the shape of $\llambda$ consists of alternating horizontal and vertical line segments. The second is because horizontal summation is equivalent to union of column-length multisets.
\end{proof}

\begin{prop}
\label{prop:commonrows}

The partition $\llambda$ satisfies $\Rows(\llambda)\geq r$ if and only if it can be written as
\[
\llambda=\mmu\Vplus (\nnu\Hplus \vvarrho_r)
\]
for suitable partitions $\mmu,\nnu$. Furthermore the partitions $\big(\llambda_{(1)},\llambda_{(2)},\dots,\llambda_{(k)}\big)$, possibly of differing sizes, share at least $r$ distinct row lengths if and only if there exist partitions $(\mmu_{(1)},\dots,\mmu_{(k)},\nnu)$ such that
\[
\llambda_{(j)}=\mmu_{(j)}\Vplus (\nnu\Hplus \vvarrho_r),\quad \forall~ 1\leq j\leq k.
\]
\end{prop}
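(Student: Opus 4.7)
The plan is to prove both directions of the second (more general) statement, of which the first is the case $k=1$.

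For the easy (``if'') direction, I would first observe that for any partition $\nnu$ with parts $(c_1 \geq c_2 \geq \dots \geq c_r \geq 0)$ (padding with zeros as needed), the horizontal sum $\nnu \Hplus \vvarrho_r$ has rows of lengths $c_i + (r-i+1)$ for $i=1,\dots,r$. These values are strictly decreasing (the gap between consecutive entries is $(c_i - c_{i+1}) + 1 \geq 1$), so $\nnu \Hplus \vvarrho_r$ has exactly $r$ distinct row lengths. Vertical summation with $\mmu_{(j)}$ merely adjoins further rows, so the multiset of row lengths of $\llambda_{(j)}$ contains $\{c_i + (r-i+1) : 1 \leq i \leq r\}$ for every $j$. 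Hence these $r$ distinct values are shared row lengths of all the $\llambda_{(j)}$'s, giving the desired conclusion.

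For the converse (``only if'') direction, suppose $\llambda_{(1)},\dots,\llambda_{(k)}$ share at least $r$ distinct (necessarily positive) row lengths. The key step, which avoids the only real potential obstacle, is to select these values carefully: I would pick $d_1 > d_2 > \dots > d_r \geq 1$ to be the $r$ \emph{smallest} shared distinct row lengths. Since these are distinct positive integers in decreasing order, one automatically has $d_i \geq r - i + 1$ for each $i$. Now define $\nnu$ by $\nnu_i := d_i - (r - i + 1)$. The estimate just noted guarantees $\nnu_i \geq 0$, and $\nnu_i - \nnu_{i+1} = (d_i - d_{i+1}) - 1 \geq 0$ since the $d_i$ are strictly decreasing integers, so $\nnu$ is a legitimate partition. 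By construction $\nnu \Hplus \vvarrho_r$ has precisely the row lengths $d_1 > \dots > d_r$.

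Finally, for each $j$, since each value $d_i$ appears as a row length of $\llambda_{(j)}$, I can select one row of length $d_i$ from $\llambda_{(j)}$ for each $i=1,\dots,r$; together these $r$ rows form a sub-partition equal to $\nnu \Hplus \vvarrho_r$. Letting $\mmu_{(j)}$ be the remaining rows of $\llambda_{(j)}$, we get $\llambda_{(j)} = \mmu_{(j)} \Vplus (\nnu \Hplus \vvarrho_r)$ exactly as desired. The only subtle point throughout is the choice to take the smallest shared distinct row lengths so that $\nnu$ has nonnegative entries; every other verification reduces to routine checking of partition inequalities.
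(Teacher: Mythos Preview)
Your proof is correct and follows essentially the same approach as the paper's: extract $r$ rows of distinct shared lengths to form $\nnu \Hplus \vvarrho_r$, and let $\mmu_{(j)}$ be the remaining rows. One minor remark: your restriction to the $r$ \emph{smallest} shared distinct row lengths is unnecessary. Any $r$ distinct positive integers $d_1 > d_2 > \dots > d_r \geq 1$ automatically satisfy $d_i \geq r-i+1$ (by downward induction from $d_r \geq 1$), so the ``potential obstacle'' you flag never actually arises, and the paper accordingly makes no such restriction.
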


\begin{proof}

Because $\vvarrho_r$ has $r$ distinct row lengths, the previous proposition implies that $\mmu\Vplus (\nnu\Hplus \vvarrho_r)$ does as well. Conversely, given $\llambda$ with $r$ distinct row lengths, hence column lengths, we can take $(\nnu\Hplus \vvarrho_r)$ to consist of those $r$ rows; since their length are distinct it is easy to see that some suitable $\nnu$ exists. We take $\mmu$ to consist of the remaining rows in $\llambda$, so that vertical summation combines the row multisets to give $\llambda$. The second part is similar.
\end{proof}

\begin{figure}
\begin{framed}
\ytableausetup{boxsize=1em}
\begin{align*}
&\begin{matrix}\ydiagram [*(blue)]{0,4+3,0,1+2,0,1} *[*(red)]{0,7,0,3,0,1} *[*(green)]{8,0,4,0,3,0}\end{matrix}&= \begin{pmatrix} \ydiagram[*(green)]{8,4,3}\\ \Vplus \\ \ydiagram [*(blue)]{4+3,1+2,1} *[*(red)]{7,3,1}\end{pmatrix}&= \begin{pmatrix} \ydiagram[*(green)]{8,4,3}\\ \Vplus \\ \\  \ydiagram[*(red)]{4,1} \Hplus\ydiagram[*(blue)]{3,2,1}  \end{pmatrix} \\
&\begin{matrix}\ydiagram [*(blue)]{0,4+3,0,1+2,0,1} *[*(red)]{0,7,0,3,0,1} *[*(green)]{7,0,5,0,3,0}\end{matrix}&=\begin{pmatrix} \ydiagram[*(green)]{7,5,3}\\ \Vplus \\ \ydiagram [*(blue)]{4+3,1+2,1} *[*(red)]{7,3,1} \end{pmatrix} &=\begin{pmatrix} \ydiagram[*(green)]{7,5,3}\\ \Vplus \\ \\  \ydiagram[*(red)]{4,1} \Hplus \ydiagram[*(blue)]{3,2,1} \end{pmatrix} 
\end{align*}

\caption{An illustration of Proposition~\ref{prop:commonrows}, which shows how to extract a common staircase ``inside" partitions with shared row lengths. In this case the partitions $(8,7,4,3,3,1),(7,7,5,3,3,1)$ share distinct row lengths $(7,3,1)$ which are separated in the lower diagrams. Colors indicate how the diagrams on the right side are combined.} 
\end{framed}
\end{figure}

\subsection{The Fourth Power Saxl Theorem}

As explained in the introduction, the Saxl Conjecture asserts that $\vvarrho_r^{\otimes 2}$ contains all partitions of size $\binom{r+1}{2}$. Though the Saxl Conjecture is still open, the following fourth power Saxl theorem from \cite{LuoSellke} is just as good for our purposes.

\begin{thm}[Fourth Power Saxl Theorem; \cite{LuoSellke}, Theorem 1.4]
\label{thm:SaxlFourth}

For $r$ sufficiently large, the tensor fourth power $\varrho_r^{\otimes 4}$ covers $\Irrep(S_{\binom{r+1}{2}})$.

\end{thm}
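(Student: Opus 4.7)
The plan is to derive Theorem~\ref{thm:SaxlFourth} from the other main result of \cite{LuoSellke} recalled in the introduction: that $\vvarrho_r^{\otimes 2}$ contains an asymptotically full Plancherel-measure set of irreducibles of $S_{\binom{r+1}{2}}$. Set $n = \binom{r+1}{2}$ and let $S \subseteq \Irrep(S_n)$ denote the set of irreducibles appearing in $\vvarrho_r^{\otimes 2}$, so that prior result reads $P(S^c) = o_r(1)$ where $P$ is Plancherel measure. The reduction is immediate: whenever $\mmu, \nnu \in S$ we have $\mmu \otimes \nnu \subseteq \vvarrho_r^{\otimes 2} \otimes \vvarrho_r^{\otimes 2} = \vvarrho_r^{\otimes 4}$, so it suffices to prove that for every $\llambda \vdash n$ there exist $\mmu, \nnu \in S$ with $g(\llambda, \mmu, \nnu) > 0$.

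To establish this I would use a simple dimension-weighted counting identity. From $\sum_\nnu g(\llambda, \mmu, \nnu) \dim(\nnu) = \dim(\llambda \otimes \mmu) = \dim(\llambda) \dim(\mmu)$ and $\sum_\mmu \dim(\mmu)^2 = n!$ one obtains
\[\sum_{\mmu, \nnu \vdash n} g(\llambda, \mmu, \nnu) \dim(\mmu) \dim(\nnu) = \dim(\llambda) \cdot n!.\]
Restricting to $\mmu \in S^c$ and summing over $\nnu$ first yields
\[\sum_{\mmu \in S^c,\ \nnu \vdash n} g(\llambda, \mmu, \nnu) \dim(\mmu) \dim(\nnu) = \dim(\llambda) \sum_{\mmu \in S^c} \dim(\mmu)^2 = \dim(\llambda) \cdot n! \cdot P(S^c),\]
which is $o(\dim(\llambda)\, n!)$ by the prior Plancherel-measure result. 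The same bound holds with the roles of $\mmu$ and $\nnu$ swapped, so a union bound shows the total contribution from pairs with $\mmu \in S^c$ or $\nnu \in S^c$ is $o(\dim(\llambda)\, n!)$. Subtracting, the contribution from $(\mmu, \nnu) \in S \times S$ is at least $(1 - o(1)) \dim(\llambda)\, n!$, which is strictly positive for all sufficiently large $r$ (the $o(1)$ bound on $P(S^c)$ is uniform in $\llambda$). Hence at least one good pair $(\mmu, \nnu) \in S \times S$ achieves $g(\llambda, \mmu, \nnu) > 0$, and the reduction above then gives $\llambda \in \vvarrho_r^{\otimes 4}$.

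There is no serious obstacle once the identity is in place; the content is simply the observation that dimension-squared weights coincide with Plancherel weights, so the bad sum is controlled precisely by the quantity $P(S^c)$ that the prior result bounds. It is worth noting that the uniform-measure version of the prior result would not suffice on its own for this argument, since a single high-dimensional exceptional irreducible outside $S$ could dominate the bad sum; the Plancherel statement is essential.
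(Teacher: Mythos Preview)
Your argument is correct and follows the same high-level reduction as the paper's appendix: deduce Theorem~\ref{thm:SaxlFourth} from the Plancherel-measure result of \cite{LuoSellke} by showing that if $S\subseteq\Irrep(S_n)$ has $P(S^c)<1/2$ then for every $\llambda$ there exist $\mmu,\nnu\in S$ with $g(\llambda,\mmu,\nnu)>0$. The difference lies in how this last fact is proved. The paper establishes a general monotonicity lemma, $M_G(V\otimes\llambda)\geq M_G(V)$ for any irreducible $\llambda$, via an $L^2(G)$ argument: one approximates the regular representation by a linear combination of characters of irreducibles in $V$, and observes that pointwise multiplication by $\chi^{\llambda}/\dim(\llambda)$ cannot worsen this approximation. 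Pigeonhole then finishes. Your route is more direct and avoids the $L^2$ machinery entirely: the identity $\sum_{\mmu,\nnu}g(\llambda,\mmu,\nnu)\dim(\mmu)\dim(\nnu)=\dim(\llambda)\,n!$ together with the exact evaluation of the bad contribution as $\dim(\llambda)\,n!\,P(S^c)$ gives the result in one line. Your approach is more elementary and yields the needed inequality with the sharp constant (the bad contribution is at most $2P(S^c)$ of the total), while the paper's approach proves the stronger standalone statement that tensoring by any irreducible can only increase Plancherel support. Your closing remark that the uniform-measure version would not suffice matches the paper's own observation at the end of the appendix.
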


For convenience we recall the following simple result which ensures that the sufficiently large condition above will not affect the results of this paper. 

\begin{prop}
\label{prop:smallpowers}
For any $\llambda\vdash n$, if $\dim(\llambda)>1$ then $\llambda^{\otimes t}$ covers $\Irrep(S_n)$ for some $t=t(\llambda)$.
\end{prop}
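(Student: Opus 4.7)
The plan is to invoke Burnside's classical theorem, which states that if $\rho$ is a faithful complex representation of a finite group $G$, then every irreducible representation of $G$ appears as a subrepresentation of $\rho^{\otimes t}$ for some finite $t$. The proof of this theorem is the standard one: using that the character $\chi_{\rho}$ takes only finitely many distinct values on $G$ and a Vandermonde argument on the orthogonality relations, one shows that if some irreducible $\psi$ never appears in any $\rho^{\otimes t}$ then $\psi$ would have to vanish at the identity, a contradiction. Given this, the task reduces to verifying that any $\llambda \vdash n$ with $\dim(\llambda) > 1$ is a faithful representation of $S_n$.

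To verify faithfulness, I would analyze the kernel $K = \ker(\llambda)$, which is a normal subgroup of $S_n$. For $n \geq 5$ the only normal subgroups of $S_n$ are $\{e\}$, $A_n$, and $S_n$. The case $K = S_n$ is ruled out because $\llambda$ is nontrivial. If $K = A_n$, then $\llambda$ factors through the quotient $S_n/A_n \cong \mathbb{Z}/2$, forcing $\dim(\llambda) \leq 1$ and contradicting the hypothesis. Hence $K = \{e\}$ and $\llambda$ is faithful, so Burnside's theorem supplies a finite $t(\llambda)$ with the desired property.

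The only real subtlety is the finite list of small $n$ for which $S_n$ has extra normal subgroups, principally $n = 4$ where the Klein four-group $V_4$ is normal in $S_4$. For $n \leq 3$ every irreducible with $\dim > 1$ is easily checked to be faithful by direct inspection, and $n = 4$ can be resolved by finite case analysis or absorbed into the implicit ``$n$ sufficiently large'' convention under which this proposition will be deployed (to complement the ``$r$ sufficiently large'' clause of Theorem~\ref{thm:SaxlFourth}). Since only the existence of some finite $t(\llambda)$ is needed, with no quantitative bound required for the applications in the paper, this is the main and only obstacle and it is a mild one.
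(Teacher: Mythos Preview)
Your approach is essentially identical to the paper's: show that $\dim(\llambda)>1$ forces $\llambda$ to be faithful via the normal subgroup structure of $S_n$, then invoke the classical Burnside-type result on tensor powers of faithful representations. The paper phrases the faithfulness step as ``since $A_n$ is simple, the only non-faithful irreducibles are the trivial and sign representations,'' which is exactly your kernel analysis.

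One small correction to your handling of $n=4$: the proposition is in fact \emph{false} there, so it cannot be salvaged by finite case analysis. The partition $(2,2)\vdash 4$ has dimension $2$ but kernel equal to the Klein four-group $V_4$, so every tensor power of $(2,2)$ factors through $S_4/V_4\cong S_3$ and can never contain the $3$-dimensional irreducibles $(3,1)$ or $(2,1,1)$. Your alternative resolution---absorbing $n=4$ into the ``$n$ sufficiently large'' convention---is the correct one, and is implicitly what the paper does as well (its appeal to simplicity of $A_n$ already presumes $n\geq 5$).
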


\begin{proof}
$A_n$ is simple, so the only non-faithful irreducible representations of $S_n$ are the trivial and alternating representations which are dimension $1$. Hence $\llambda$ is faithful. It is well-known that a large tensor power of any faithful representation covers $\Irrep(G)$ for any finite group $G$.
\end{proof}

\begin{rem}
{
Subsequent to the initial posting of this paper, \cite{harman2022tensor} improved Theorem~\ref{thm:SaxlFourth}, showing that the tensor cube $\varrho_r^{\otimes 3}$ covers $\Irrep(S_{\binom{r+1}{2}})$ and with no requirement that $r$ is sufficiently large. The latter point means that $\alpha_r\leq 2$ for all $r$ in Lemma~\ref{lem:2pair}, which presumably improves the implicit constant factors in our results.
}
\end{rem}

\section{Statements of Results}

Our first main result is a sufficient criterion for a tensor product to cover $\Irrep(S_n)$. The condition requires that the Young diagrams being tensored can be grouped into pairs so that each pair shares many distinct row lengths and has small blockwise distance.

\begin{thm}
\label{thm:criterion}

There exists an absolute constant $C$ such that the following holds. For a positive integer $n$, let $r\geq 2$ and $k\geq \frac{Cn}{r^2}$. Let $\llambda_{(1)},\dots,\llambda_{(k)},{\widetilde\llambda}_{(1)},\dots,{\widetilde\llambda}_{(k)}\vdash n$ and suppose that 
\begin{equation}
\label{eq:criterion-hypothesis}
\Rows(\llambda_{(i)},{\widetilde\llambda}_{(i)})\geq r
\quad\text{ and }\quad
d(\llambda_{(i)},{\widetilde\llambda}_{(i)})\leq \frac{r^2}{96}
\end{equation}
for all $1\leq i\leq k$. Then the tensor product
\[
	\bigotimes_{i=1}^{k} (\llambda_{(i)}\otimes {\widetilde\llambda}_{(i)})
\]
covers $\Irrep(S_n)$.
\end{thm}

Our first corollary is an immediate specialization to tensor powers.

\begin{cor}
\label{cor:powersqualitative}

Suppose $\Rows(\llambda)\geq r\geq 2$. Then 
\[
\llambda^{\otimes O(n/r^2)}
\] 
covers $\Irrep(S_n)$. Moreover the bound $O(n/r^2)$ is best possible.

\end{cor}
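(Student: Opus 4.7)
The approach splits the corollary into the stated upper bound and a matching lower bound; both are short consequences of Theorem~\ref{thm:criterion} plus a standard dimension estimate. For the upper bound, I would specialize Theorem~\ref{thm:criterion} by setting $\llambda_{(i)}=\mmu_{(i)}=\llambda$ for every $i\leq k$. This makes the two hypotheses free: $\Rows(\llambda,\llambda)=\Rows(\llambda)\geq r$ and $d(\llambda,\llambda)=0\leq r^2/8$. The theorem's conclusion then reads that $\bigotimes_{i=1}^{k}(\llambda\otimes\llambda)=\llambda^{\otimes 2k}$ covers $\Irrep(S_n)$ as soon as $k\geq Cn/r^2$, so $\llambda^{\otimes t}$ covers whenever $t\geq 2Cn/r^2=O(n/r^2)$. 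Finitely many small-$n$ cases not already handled can be absorbed into the implicit constant via Proposition~\ref{prop:smallpowers}.

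To show the exponent $n/r^2$ is sharp, I would construct an explicit $\llambda^{\star}\vdash n$ with $\Rows(\llambda^{\star})=r$ whose dimension is small enough that the simple bound $t\geq\Omega(n\log n/\log\dim(\llambda^{\star}))$ forces $t\geq\Omega(n/r^2)$. A natural candidate is
\[\llambda^{\star}:=\bigl(n-\tbinom{r}{2},\;r-1,\;r-2,\;\ldots,\;1\bigr)\vdash n,\]
defined for $n>\binom{r+1}{2}$; it is a long top row above a copy of $\vvarrho_{r-1}$ and has exactly $r$ distinct row lengths. Every SYT of $\llambda^{\star}$ is determined by the $\binom{r}{2}$ entries filling the staircase together with their standard arrangement there, which would give the bound
\[\dim(\llambda^{\star})\leq\binom{n}{\binom{r}{2}}\dim(\vvarrho_{r-1})\leq n^{r^2/2}(r^2)^{r^2/2},\]
hence $\log\dim(\llambda^{\star})=O(r^2\log n)$. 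Since $S_n$ has irreducible representations of dimension $n^{\Omega(n)}$, any covering tensor power $(\llambda^{\star})^{\otimes t}$ must satisfy $\dim(\llambda^{\star})^t\geq n^{\Omega(n)}$, i.e.\ $t=\Omega(n/r^2)$, matching the upper bound up to an absolute constant.

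No serious obstacle is expected in either direction: the upper bound is a direct specialization, and the lower bound reduces to a standard SYT-counting estimate applied to an explicit family. All of the genuine difficulty is packaged inside Theorem~\ref{thm:criterion}.
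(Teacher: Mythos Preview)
Your upper bound is exactly the paper's argument: specialize Theorem~\ref{thm:criterion} with all $\llambda_{(i)}=\mmu_{(i)}=\llambda$. For the sharpness claim you also pick the same witness partition as the paper, namely $\llambda^{\star}=\vvarrho_r\Hplus 1_{n-\binom{r+1}{2}}$, but you argue the obstruction differently. The paper observes that $\llambda^{\star}$ lies within blockwise distance $O(r^2)$ of $1_n$, hence $\llambda^{\star}\in\ttau_n^{\otimes O(r^2)}$ and $(\llambda^{\star})^{\otimes t}\subseteq\ttau_n^{\otimes O(tr^2)}$; since $\ttau_n^{\otimes k}$ cannot reach $1^n$ for $k\leq n-2$, one needs $t=\Omega(n/r^2)$. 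Your argument instead bounds $\log\dim(\llambda^{\star})=O(r^2\log n)$ via the SYT count and invokes the generic dimension obstruction $t\geq\Omega(n\log n/\log\dim(\llambda^{\star}))$. Both are valid and yield the same bound; the paper's route is slightly more self-contained (no hook-length or Plancherel-size input needed), while yours connects the tightness directly to the dimension heuristic that motivates Theorem~\ref{thm:powersquantitative}.
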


\begin{proof}

The first assertion is immediate from Theorem~\ref{thm:criterion}. To see why the bound is best possible note that $\vvarrho_r\Hplus 1_{n-\binom{r+1}{2}}\in \ttau_n^{\otimes O(r^2)}$ and therefore $\left(\vvarrho_r\Hplus 1_{n-\binom{r+1}{2}}\right)^{\otimes t}\subseteq\ttau_n^{\otimes O(tr^2)}$. Finally observe that $\ttau_n^{\otimes k}$ cannot cover $\Irrep(S_n)$ for $k\leq n-2$ because $d(1^n,1_n)=n-1$. This shows that $n/r^2$ tensor powers may be necessary. Moreover the trivial representation shows we must have $r\geq 2$ for the result to hold.
\end{proof}

The next corollary gives a covering criterion which only needs to be checked on the individual diagrams. It requires the existence of a suitable matching between enough pairs $\llambda_{(i)},\widetilde\llambda_{(i)}$ of Young diagrams. Given this, covering happens if each individual diagram has both $\Omega(\sqrt{n})$ distinct row lengths and a reasonably nondegenerate shape after dilation by $n^{-1/2}$. This comes at the cost of good quantitative dependence on the number of distinct rows.

\begin{cor}
\label{cor:manyrowssuffices}

Suppose $M_n$ is a sequence of probability measures on $\mathcal Y_n$ such that $\llambda$ sampled from $M_n$ asymptotically almost surely has $\Rows(\llambda)\geq \varepsilon\sqrt{n}$ for some fixed $\varepsilon>0$. Moreover suppose that at least one of the following two conditions holds:

\begin{enumerate}

\item A sample $\llambda$ from $M_n$ asympotically almost surely has all row and column lengths at most $C\sqrt{n}$, for some fixed constant $C$. 
\item $M_n$ converges in probability to an area $1$ limit shape with respect to rescaled blockwise distance.

\end{enumerate}

Then for some $k=k(C,\varepsilon)$ sufficiently large, if $\llambda_{(1)},\dots,\llambda_{(k)}$ are arbitrarily coupled samples from $M_n$, the tensor product $\bigotimes_{i=1}^k \llambda_{(i)}$ asympototically almost surely covers $\Irrep(S_n)$. (If the second condition holds then $k=k(\varepsilon)$ depends only on $\varepsilon$.)

\end{cor}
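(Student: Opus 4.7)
My plan is to invoke Theorem~\ref{thm:criterion} with $r$ taken to be a constant multiple of $\sqrt{n}$, so that the required number of pairs $Cn/r^2$ becomes a constant depending only on $C$ and $\varepsilon$. The task reduces to extracting from $\llambda_{(1)},\dots,\llambda_{(k)}$ a matching of such pairs $(\llambda_{(i)},\mmu_{(i)})$ satisfying (a) $\Rows(\llambda_{(i)},\mmu_{(i)})\geq r$ and (b) $d(\llambda_{(i)},\mmu_{(i)})\leq r^2/8$.

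First I would handle pairwise closeness. Under Condition~1, the rescaled diagrams lie in a compact subset of $[0,C]^2$ with respect to rescaled blockwise distance, so covering by $N_0 = N_0(C, \varepsilon)$ balls of radius $\varepsilon^4/(100 C^2)$ and applying pigeonhole produces a subset $S$ of size $\geq k/N_0$ whose pairwise blockwise distances are at most $r^2/8$ for $r = \Theta(\varepsilon^2\sqrt{n}/C)$. Under Condition~2, convergence in probability already yields pairwise distances $o(n)$ a.a.s., so I can take $S$ to be the entire sample (and the limit shape's $O(\sqrt{n})$ bound on row and column lengths transfers to samples after a negligible truncation).

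I would next enforce condition (a) by a counting argument. Within $S$, the row lengths live in $\{1,\dots,C\sqrt{n}\}$ while each $\llambda_{(i)}$ contributes at least $\varepsilon\sqrt{n}$ distinct row lengths, collected in a set $R_i$. Cauchy--Schwarz then gives
\[
\sum_{i,j \in S} |R_i \cap R_j| = \sum_{\ell \leq C\sqrt{n}} \bigl|\{i\in S : \ell \in R_i\}\bigr|^2 \geq \frac{\left(\sum_{i \in S} |R_i|\right)^2}{C\sqrt{n}} \geq \frac{|S|^2 \varepsilon^2 \sqrt{n}}{C}.
\]
The average of $|R_i \cap R_j|$ thus exceeds $\varepsilon^2\sqrt{n}/C$, and since each intersection is at most $C\sqrt{n}$, a reverse-Markov estimate shows that a density $\delta = \Omega(\varepsilon^2/C^2)$ of pairs share at least $r$ row lengths. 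A graph of density $\delta$ on $|S|$ vertices has a matching of size $\Omega(\delta|S|)$ (via the standard bound $m \leq 2s|S|$ relating edge count and matching size), yielding the required $O(C^2/\varepsilon^4)$ good pairs provided $k$ is chosen large enough in terms of $C$ and $\varepsilon$ (or just $\varepsilon$ in Condition~2, where $N_0 = 1$).

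Once Theorem~\ref{thm:criterion} is applied to the matched pairs, the remaining unpaired samples are tensored into the product, which preserves coverage: if $V$ covers $\Irrep(S_n)$ and $W \neq 0$, then the multiplicity of any $\mmu \in \Irrep(S_n)$ in $V \otimes W$ is at least $\sum_\llambda g(\llambda, W, \mmu) \geq 1$ since $\mmu \otimes W$ is a nonzero sum of irreducibles. I expect the shared-row step to be the main obstacle: even two partitions at vanishing rescaled blockwise distance can share no row lengths at all (e.g.\ partitions with all-even versus all-odd row lengths, padded to the same size). Pair-by-pair constructions therefore cannot work, and one genuinely needs the global pigeonhole above, exploiting that the row-length universe has size $O(\sqrt{n})$ while each sample contributes $\Omega(\sqrt{n})$ distinct row lengths.
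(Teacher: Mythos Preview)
Your approach matches the paper's: pigeonhole for closeness via compactness, then an averaging argument on row-length sets to force many shared distinct rows, then apply Theorem~\ref{thm:criterion}. Your Cauchy--Schwarz plus matching-size bound is a valid alternative to the paper's simpler greedy extraction (find one intersecting pair, remove it, repeat on the rest); both rest on the same density observation that $\sim \alpha^{-1}$ subsets of $[N]$ each of size $\alpha N$ must have two with intersection $\Omega(\alpha^2 N)$.

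There is one gap. Under Condition~2 your parenthetical claim that ``the limit shape's $O(\sqrt{n})$ bound on row and column lengths transfers to samples after a negligible truncation'' is not justified and can fail: convergence in rescaled blockwise distance does not control individual row lengths, and an area-$1$ limit shape need not have bounded support. But your Cauchy--Schwarz step genuinely needs the row-length universe to have size $O(\sqrt{n})$. The fix (which the paper uses) is elementary and does not mention the limit shape at all: any $\llambda\vdash n$ with $\Rows(\llambda)\geq \varepsilon\sqrt{n}$ must have at least $\varepsilon\sqrt{n}/100$ of its distinct row lengths lying in $[1,100\sqrt{n}/\varepsilon]$, since otherwise those rows alone would have total area exceeding $n$. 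Restricting each $R_i$ to this truncated range and running your argument with the effective value $C=100/\varepsilon$ then works verbatim, and this is exactly why $k$ depends only on $\varepsilon$ in that case.
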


\begin{rem}

The two conditions of Corollary~\ref{cor:manyrowssuffices} regarding the shape of a typical $\llambda$ can be generalized to the following requirement: there exists a function $\delta(C)$ tending to $0$ such that asymptotically almost surely, at most $\delta n$ squares of a random $\llambda$ sampled from $M_n$ fall outside the upper-left $C\sqrt{n}\times C\sqrt{n}$ box. This requires essentially no change in the proof, and in fact $\delta$ only needs to eventually be below a small constant times $\varepsilon^2$. However we feel the given statement captures almost all interesting cases without unnecessary complication.

\end{rem}

Corollary~\ref{cor:manyrowssuffices} implies that tensor products of a constant number of uniformly or Plancherel random irreducible representations cover $\Irrep(S_n)$. All conditions of Corollary~\ref{cor:manyrowssuffices} are previously known, except the distinct rows condition for Plancherel measure which we verify using the results of \cite{Borodin}.

\randomproduct*

Finally we take a closer look at tensor powers and in particular the minimal $t=t(\llambda)$ such that $\llambda^{\otimes t}$ covers $\Irrep(S_n)$. Corollary~\ref{cor:powersqualitative} above gives an upper bound which is tight for some $\llambda$, but it is far from optimal for other shapes such as near-rectangles. As a trivial lower bound, observe that $t(\llambda)\geq \Omega(\frac{n\log n}{\log\dim(\llambda)})$ holds because there exist irreducible representations with dimension $n^{\Omega(n)}$. We show this bound is tight up to an absolute constant factor for all $\llambda$.

\powersquantitative*

The constants $k$ and $C$ in Theorems~\ref{thm:randomproduct} and~\ref{thm:powersquantitative} are both effective. We expect that for purposes of quantitative estimates, a better estimate of $k$ in Theorem~\ref{thm:randomproduct} could be obtained by verifying the distinct shared row lengths condition directly instead of using Corollary~\ref{cor:manyrowssuffices}.

\section{Proof of Theorem~\ref{thm:criterion} and Corollaries}

In this section we prove Theorem~\ref{thm:criterion} and then its corollaries. As mentioned in the introduction, the strategy is to apply Theorem~\ref{thm:SaxlFourth} to the staircases inside the Young diagrams (in the sense of Proposition~\ref{prop:commonrows}) and combine this information via the semigroup property.

\subsection{Proof of Theorem~\ref{thm:criterion}}

\begin{lem}
\label{lem:neartensor}

Suppose $\llambda,\widetilde\llambda\vdash n$ have blockwise distance $d(\llambda,\widetilde\llambda)\leq d$. Then there exists $\hat\ttheta\vdash n$ such that $d(1_n,\hat\ttheta)\leq d$ and $c\big(\llambda,\widetilde\llambda,\hat\ttheta\big)$ holds. Further, if $d(1_n,\hat\ttheta)\leq d$ holds for some $\hat\ttheta\vdash n$ with $n\geq 2d$, then there exists $\ttheta\vdash 2d$ such that $\hat\ttheta=1_{n-2d}\Hplus\ttheta$.
\end{lem}

\begin{proof}

Letting $\ttau_n$ denote the standard representation, we know that $\llambda\otimes {\ttau_n^{\otimes d}}$ contains $\widetilde\llambda$. This can be restated as
\[
c\big(\llambda\otimes {\ttau_n^{\otimes d}},\widetilde\llambda,1_n\big)
\]
which implies 
\[
	c\big(\llambda,\widetilde\llambda,{\ttau_n^{\otimes d}}\big).
\]
This implies the first statement since any subrepresentation of $\ttau_n^{\otimes d}$ has blockwise distance at most $d$ from $1_n$. For the second statement, it suffices to observe that the first row of $\hat\ttheta$ is at least $n-2d\geq 0$ longer than its second row.
\end{proof}

\begin{lem}
\label{lem:nearsharedrows}

Suppose $\llambda,\widetilde\llambda\vdash n$ have blockwise distance $d(\llambda,\widetilde\llambda)\leq d$ and $\Rows(\llambda,\widetilde\llambda)\geq r$, and {assume $n\geq 2d+\binom{r+1}{2}$}. Then there exists $\ttheta\vdash 2d$ such that any $\nnu\in\vvarrho_r^{\otimes 2}$ satisfies
\[
c\big(\llambda,\widetilde\llambda,{1_{n-2d-\binom{r+1}{2}}}\Hplus\ttheta\Hplus \nnu\big).
\]
\end{lem}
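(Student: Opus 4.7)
The plan is to decompose $\llambda$ and $\widetilde\llambda$ so as to extract the common staircase $\vvarrho_r$ guaranteed by $\Rows(\llambda,\widetilde\llambda)\ge r$, apply Lemma~\ref{lem:neartensor} to the leftover pieces, and then splice everything together via the semigroup property. Using Proposition~\ref{prop:commonrows}, I write
\[
\llambda = \mmu \Vplus (\ggamma \Hplus \vvarrho_r), \qquad \widetilde\llambda = \widetilde\mmu \Vplus (\ggamma \Hplus \vvarrho_r)
\]
for partitions $\mmu, \widetilde\mmu, \ggamma$ with $|\mmu| = |\widetilde\mmu|$ and $\ggamma$ shared between the two decompositions. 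Since the $\ggamma \Hplus \vvarrho_r$ piece is identical on both sides, the discrepancy between $\llambda$ and $\widetilde\llambda$ is carried entirely by $\mmu$ versus $\widetilde\mmu$, so the decomposition can be arranged to satisfy $d(\mmu,\widetilde\mmu) \le d(\llambda,\widetilde\llambda) \le d$.

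Next I would assemble three Kronecker relations. Applying Lemma~\ref{lem:neartensor} to $\mmu, \widetilde\mmu$ yields some $\ttheta \vdash 2d$ with $c(\mmu, \widetilde\mmu, 1_{|\mmu|-2d} \Hplus \ttheta)$. The hypothesis $\nnu \in \vvarrho_r^{\otimes 2}$ is by definition $c(\vvarrho_r, \vvarrho_r, \nnu)$, and trivially $c(\ggamma, \ggamma, 1_{|\ggamma|})$. Combining the latter two horizontally via Theorem~\ref{thm:Hsum} gives
\[
c(\vvarrho_r \Hplus \ggamma,\; \vvarrho_r \Hplus \ggamma,\; \nnu \Hplus 1_{|\ggamma|}).
\]
I then combine this with the $\mmu,\widetilde\mmu$ relation via Corollary~\ref{cor:vsum}, using vertical addition on the first two entries (an even number, as required) and horizontal on the third. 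The first two entries become exactly $\llambda$ and $\widetilde\llambda$, while the third telescopes as
\[
(1_{|\mmu|-2d} \Hplus \ttheta) \Hplus (\nnu \Hplus 1_{|\ggamma|}) \;=\; 1_{|\mmu|+|\ggamma|-2d} \Hplus \ttheta \Hplus \nnu \;=\; 1_m \Hplus \ttheta \Hplus \nnu,
\]
which is the desired conclusion.

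The main obstacle is the implicit hypothesis in Lemma~\ref{lem:neartensor} that the blockwise distance not exceed half the partition size, here demanding $|\mmu| \ge 2d$. The given assumption $m \ge 0$ only yields the weaker $|\mmu| + |\ggamma| = n - \binom{r+1}{2} \ge 2d$. I would resolve this by reshuffling the decomposition: for any splitting $\ggamma = \ggamma' \Vplus \ggamma''$ one may equivalently write $\llambda = (\mmu \Vplus \ggamma') \Vplus (\ggamma'' \Hplus \vvarrho_r)$ and similarly for $\widetilde\llambda$. Transferring enough rows of $\ggamma$ into $\mmu$ and $\widetilde\mmu$ boosts $|\mmu|$ to at least $2d$; this preserves $|\mmu| = |\widetilde\mmu|$ and keeps $d(\mmu, \widetilde\mmu) \le d$, since vertical summation with a common partition cannot increase blockwise distance. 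The final arithmetic $|\mmu| + |\ggamma| - 2d = m$ is unaffected, so the assembly above goes through verbatim.
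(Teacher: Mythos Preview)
Your decomposition and semigroup assembly are exactly the paper's argument (your $\mmu,\widetilde\mmu,\ggamma$ are the paper's $\chi,\widetilde\chi,\ggamma$), and you correctly flag a point the paper glosses over: Lemma~\ref{lem:neartensor} needs $|\mmu|\ge 2d$, which is not guaranteed by $m\ge 0$.

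However, your proposed repair is wrong. The identity $\llambda=(\mmu\Vplus\ggamma')\Vplus(\ggamma''\Hplus\vvarrho_r)$ would force $\ggamma\Hplus\vvarrho_r=\ggamma'\Vplus(\ggamma''\Hplus\vvarrho_r)$, but horizontal and vertical sums do not distribute this way: the left side has exactly $r$ rows, while the right side has $r$ rows from $\ggamma''\Hplus\vvarrho_r$ plus whatever rows $\ggamma'$ contributes. Hence the only admissible splitting is $\ggamma'=\emptyset$, and no boxes can be transferred. Concretely, with $\ggamma'=\ggamma''=(1)$ and $r=2$ one gets $\ggamma\Hplus\vvarrho_2=(3,2)$ but $\ggamma'\Vplus(\ggamma''\Hplus\vvarrho_2)=(3,1,1)$.

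A clean fix is to bypass the packaged statement of Lemma~\ref{lem:neartensor} and use its content directly. From $d(\mmu,\widetilde\mmu)\le d$ one still has $c(\mmu,\widetilde\mmu,\alpha)$ for some $\alpha\in\ttau_{|\mmu|}^{\,d}$, and any such $\alpha$ satisfies $\alpha_1\ge|\mmu|-d$ and $\alpha_2\le d$ regardless of whether $|\mmu|\ge 2d$. After the horizontal sum with $1_{|\ggamma|}$ in the third slot, the partition $1_{|\ggamma|}\Hplus\alpha$ has first row at least $|\ggamma|+|\mmu|-d=m+d$ and second row at most $d$, so (using only $m\ge 0$) it equals $1_m\Hplus\ttheta$ for some $\ttheta\vdash 2d$. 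The rest of your assembly then goes through verbatim.
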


\begin{proof}

Let $\mmu$ be the partition with $r$ distinct-size rows (and no other rows), one for each shared row length of $\llambda,\widetilde\llambda$. From the decomposition of Proposition~\ref{prop:commonrows}, we can write
\begin{equation}
\label{eq:llambda=sum}
\llambda=\cchi\Vplus \mmu,\quad \widetilde\llambda=\widetilde\cchi\Vplus\mmu
\end{equation}
for $\cchi,\widetilde\cchi\vdash (n-|\mmu|)$ satisfying $d(\cchi,\widetilde\cchi)\leq d$. {By the first part of Lemma~\ref{lem:neartensor}, there exists $\hat\ttheta\vdash (n-|\mmu|)$ such that $d(\hat\ttheta,1_{n-|\mmu|})\leq d$} and 
\begin{equation}
\label{eq:cchi}
	c\big(\cchi,\widetilde\cchi, {\hat\ttheta}\big).
\end{equation}
Writing $\mmu= \ggamma\Hplus\vvarrho_r$ and applying the semigroup property to $c\big(\ggamma,\ggamma,1_{|\ggamma|}\big)$ and $c\big(\vvarrho_r,\vvarrho_r,\nnu\big)$ implies
\begin{equation}
\label{eq:mmu}
	c\big(\mmu,\mmu,{1_{|\ggamma|}\Hplus \nnu} \big)
\end{equation}
for any $\nnu\in\vvarrho_r^{\otimes 2}$. Combining \eqref{eq:cchi}, \eqref{eq:mmu} via the semigroup property with vertical summation in the first two components and recalling \eqref{eq:llambda=sum}, we find
\[
{
	c\big(\llambda,\widetilde\llambda,1_{|\ggamma|}\Hplus \hat\ttheta\Hplus \nnu\big)
	.
}
\]
{
By assumption, $|\ggamma|+|\hat\ttheta|=n-|\nnu|=n-\binom{r+1}{2}\geq 2d$. It is moreover easy to see that
\[
	d\big(1_{|\ggamma|}\Hplus \hat\ttheta,1_{|\ggamma|+|\hat\ttheta|}\big)\leq d(\hat\ttheta,1_{|\hat\ttheta|})\leq d.
\]
The second part of Lemma~\ref{lem:neartensor} now implies that
\[
	1_{|\ggamma|}\Hplus \hat\ttheta
	=
	1_{n-2d-\binom{r+1}{2}}\Hplus \ttheta
\]
for some $\ttheta\vdash 2d$, completing the proof.
}
\end{proof}

For convenience we now set $\alpha_r$ to be the smallest number such that $\vvarrho_r^{\otimes 2\alpha_r}$ covers $\Irrep(S_n)$. We recall that $\alpha_r\leq 2$ for $r$ sufficiently large by Theorem~\ref{thm:SaxlFourth} and that $\alpha_r$ is uniformly bounded.

\begin{lem}
\label{lem:2pair}

Let {$r,s\geq 1$ and $d\geq 0$} satisfy $n\geq\binom{r+1}{2}+2d$ and { $s=\binom{r+1}{2}-2d$}. Suppose that for each $1\leq i\leq {2}\alpha_r$, the pair $\llambda_{(i)},\widetilde\llambda_{(i)}\vdash n$ has blockwise distance $d(\llambda_{(i)},\widetilde\llambda_{(i)})\leq d$ and satisfies $\Rows(\llambda_{(i)},\widetilde\llambda_{(i)})\geq r\geq 2$. Then
\[
\bigotimes_{i=1}^{2\alpha_r} (\llambda_{(i)}\otimes\widetilde\llambda_{(i)})
\] 
contains $1_{n-s}\Hplus \mmu$ for all $\mmu\vdash s$.
\end{lem}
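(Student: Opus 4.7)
The plan is to apply Lemma~\ref{lem:nearsharedrows} twice to each of the $\alpha_r$ pairs, producing $2\alpha_r$ Kronecker relations of the form
\[c(\llambda_{(i)},\widetilde\llambda_{(i)},1_{m_0}\Hplus\ttheta_i\Hplus\nnu_i^{(j)}),\qquad i\in\{1,\dots,\alpha_r\},\ j\in\{1,2\},\]
where $m_0=n-2d-\binom{r+1}{2}$, the partition $\ttheta_i\vdash 2d$ is determined by the pair and not under our control, and each $\nnu_i^{(j)}\in\vvarrho_r^{\otimes 2}$ may be chosen freely. The motivation for duplicating each pair is to force every $\ttheta_i$ to appear twice in the combined product, so that self-duality can eliminate it.

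Given an arbitrary target $\mmu\vdash s$, I would choose the $\nnu_i^{(j)}$'s so that $\bigotimes_{i,j}\nnu_i^{(j)}$ contains the partition $1_{2d}\Hplus\mmu\vdash\binom{r+1}{2}$. This is possible because $\vvarrho_r^{\otimes 2\alpha_r}$ covers $\Irrep(S_{\binom{r+1}{2}})$ by the definition of $\alpha_r$, and we have $2\alpha_r$ free $\nnu$-slots in $\vvarrho_r^{\otimes 2}$ to work with (any extra slots can be filled with $1_{\binom{r+1}{2}}\in\vvarrho_r^{\otimes 2}$ from the self-dual pairing).

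The next step is the semigroup property, applied by splitting each $X_i^{(j)}:=1_{m_0}\Hplus\ttheta_i\Hplus\nnu_i^{(j)}$ into its three horizontal summands $1_{m_0}$, $\ttheta_i$, and $\nnu_i^{(j)}$. Combined with the overlap trick from Section~2.2, this shows that $\bigotimes_{i=1}^{\alpha_r}(\llambda_{(i)}\otimes\widetilde\llambda_{(i)})^{\otimes 2}$ contains $1_{m_0}\Hplus Y^{(2)}\Hplus Y^{(3)}$ whenever $Y^{(2)}\in\bigotimes_{i,j}\ttheta_i$ and $Y^{(3)}\in\bigotimes_{i,j}\nnu_i^{(j)}$. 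Taking $Y^{(3)}=1_{2d}\Hplus\mmu$ as arranged above, it remains to realize $Y^{(2)}=1_{2d}$. This is where the doubling pays off: each $\ttheta_i\otimes\ttheta_i$ contains the trivial representation $1_{2d}$ by self-duality of $S_{2d}$-irreducibles, so $\bigotimes_{i=1}^{\alpha_r}(\ttheta_i\otimes\ttheta_i)$ contains $1_{2d}^{\otimes\alpha_r}=1_{2d}$.

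Combining these ingredients, the full product contains $1_{m_0}\Hplus 1_{2d}\Hplus(1_{2d}\Hplus\mmu)=1_{m_0+4d}\Hplus\mmu$, and the arithmetic identity $m_0+4d=n+2d-\binom{r+1}{2}=n-s$ finishes the proof. The main obstacle is the lack of control over the error partitions $\ttheta_i$ coming out of Lemma~\ref{lem:nearsharedrows}; the doubling trick converts this limitation into an advantage by using $\ttheta_i\otimes\ttheta_i\supseteq 1_{2d}$ to absorb the $\ttheta_i$'s entirely, leaving only the freely chosen $\nnu_i^{(j)}$ contributions to encode the arbitrary $\mmu$.
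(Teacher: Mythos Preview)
Your argument is correct and in fact reaches the same conclusion the paper's own proof reaches, namely that the tensor product contains $1_{n-s}\Hplus\mmu$ for every $\mmu\vdash s$. (The value $m=n-\binom{s+1}{2}$ in the lemma statement is a typo; the paper's proof also ends with $1_{n-s}\Hplus\mmu$, and this is what is used downstream in the proof of Theorem~\ref{thm:criterion}.)

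Your route differs from the paper's in how the uncontrolled error partitions $\ttheta_i$ are eliminated. The paper treats the hypothesis as supplying $2\alpha_r$ genuinely distinct pairs and splits them into two blocks of size $\alpha_r$. Each block separately yields a relation of the form $c(\ldots,1_m\Hplus\ttheta_A\Hplus\mmu_A)$ (resp.\ with $\ttheta_B,\mmu_B$) where $\ttheta_A,\ttheta_B$ are fixed and $\mmu_A,\mmu_B\vdash\binom{r+1}{2}$ are arbitrary; it then cross-cancels by choosing $\mmu_A$ to contain $\ttheta_B$ and $\mmu_B$ to contain $\ttheta_A$, so that both sides carry $\ttheta_A\Hplus\ttheta_B$, which self-annihilates via $c(\ttheta_A\Hplus\ttheta_B,\ttheta_A\Hplus\ttheta_B,1)$. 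You instead read the hypothesis literally as giving only $\alpha_r$ pairs and use each one twice, so every $\ttheta_i$ automatically appears in a matched pair $\ttheta_i\otimes\ttheta_i\ni 1_{2d}$; this frees all $2\alpha_r$ of the $\nnu$-slots to encode $1_{2d}\Hplus\mmu$ via the covering property of $\vvarrho_r^{\otimes 2\alpha_r}=(\vvarrho_r^{\otimes 2})^{\otimes\alpha_r}$. Your version is a bit more streamlined and, taken at face value, needs only $\alpha_r$ distinct pairs rather than $2\alpha_r$; the paper's version accommodates $2\alpha_r$ different pairs, though in the only application (Theorem~\ref{thm:criterion}) all pairs coincide anyway.
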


\begin{proof}
By Lemma~\ref{lem:nearsharedrows}, for each $1\leq i\leq 2\alpha_r$ there exists $\ttheta_{(i)}\vdash 2d$ such that 
\begin{equation}
\label{eq:llambda_i}
c\big(\llambda_{(i)},\widetilde\llambda_{(i)},1_m\Hplus\ttheta_{(i)}\Hplus {\nnu_{(i)}}\big)
\end{equation}
for arbitrary ${\nnu_{(i)}}\vdash \binom{r+1}{2}$ contained in $\vvarrho_r^{\otimes 2}$. 

Next for any $\mmu_A\vdash \binom{r+1}{2}$, by definition of $\alpha_r$ there exist ${\nnu_{(i)}}\in \vvarrho_r^{\otimes 2}$ for $1\leq i\leq \alpha_r$ such that 
\[
	c\big(\nnu_{(1)},\dots,{\nnu_{(\alpha_r)}},{\mmu_{A}}\big).
\]
Let $\ttheta_{A}\vdash 2d$ be an arbitrary partition with $c\big({\ttheta_{(1)},\ttheta_{(2)},\dots,\ttheta_{(\alpha_r)}},{\ttheta_{A}}\big)$. {Letting $m=n-2d-\binom{r+1}{2}\geq 0$}, the semigroup property shows
\[
c\big(1_m\Hplus{\ttheta_{(1)}}\Hplus \nnu_{(1)},1_m\Hplus{\ttheta_{(2)}}\Hplus \nnu_{(2)},\dots, 1_m\Hplus {\ttheta_{(\alpha_r)}}\Hplus {\nnu_{(\alpha_r)}}, 1_m\Hplus\ttheta_{A}\Hplus \mmu_{A}\big).
\]
Combining this with \eqref{eq:llambda_i} implies
\[
c\big(\llambda_{(1)},\widetilde\llambda_{(1)},\dots,{\llambda_{(\alpha_r)}},{\widetilde\llambda_{(\alpha_r)}},1_m\Hplus\ttheta_{A}\Hplus \mmu_{A}\big).
\]
Similarly there exists $\ttheta_B\vdash 2d$ such that for all $\mmu_B\vdash \binom{r+1}{2}$,
\[
c\big({\llambda_{(\alpha_r+1)}},{\widetilde\llambda_{(\alpha_r+1)}},\dots,{\llambda_{(2\alpha_r)}},{\widetilde\llambda_{(2\alpha_r)}},1_m\Hplus\ttheta_{B}\Hplus \mmu_{B}\big).
\]
Now for arbitrary $\mmu\vdash s$, {recalling that $\binom{r+1}{2}=s+2d$ by assumption} we set 
\begin{align*}
\mmu_{A}&=\ttheta_{B}\Hplus \mmu,
\\
\mmu_{B}&=\ttheta_{A}\Hplus 1_{s}.
\end{align*}
Then
\[
	c\big(\llambda_{(1)},\widetilde\llambda_{(1)},\dots,{\llambda_{(\alpha_r)}},{\widetilde\llambda_{(\alpha_r)}},1_{m}\Hplus (\ttheta_{A}\Hplus \ttheta_{B})\Hplus \mmu\big),
\]
\[
	c\big({\llambda_{(\alpha_r+1)}},{\widetilde\llambda_{(\alpha_r+1)}},\dots,{\llambda_{(2\alpha_r)}},{\widetilde\llambda_{(2\alpha_r)}},1_{m}\Hplus (\ttheta_{A}\Hplus \ttheta_{B})\Hplus 1_{s}\big).
\]
Recalling that $c\big(\ttheta_{A}\Hplus \ttheta_{B},\ttheta_{A}\Hplus \ttheta_{B},1_{|\ttheta_{A}|+ |\ttheta_{B}|}\big)$, the semigroup property now implies
\[
	c\big(1_{m}\Hplus (\ttheta_{A}\Hplus \ttheta_{B})\Hplus \mmu,{1_{m}}\Hplus (\ttheta_{A}\Hplus \ttheta_{B})\Hplus 1_{s},1_{n-s}\Hplus\mmu\big)
\]
Combining the previous displays, we find that
\[
	c\big(\llambda_{(1)},\widetilde\llambda_{(1)},\dots,{\llambda_{(2\alpha_r)}},{\widetilde\llambda_{(2\alpha_r)}},1_{n-s}\Hplus \mmu\big)
\]
holds for arbitrary $\mmu\vdash s$ as desired.
\end{proof}

\begin{proof}[Proof of Theorem~\ref{thm:criterion}]

{
By Proposition~\ref{prop:smallpowers} and the pigeonhole principle, we may assume $n$ is at least a large constant. If $r(r+1)< n$, set $\tilde r=r$. Otherwise, set $\tilde r=\lceil\sqrt{n}/2\rceil\leq r$. In the latter case, $r^2\leq r(r+1)\leq 2n\leq 8\tilde r^2$ must hold for $\llambda_{(1)}\vdash n$ to satisfy $\Rows(\llambda_{(1)})\geq r$. Therefore in either case $\tilde r$ satisfies $\tilde r\leq r\leq \tilde r\sqrt{8}$ and $\tilde r(\tilde r+1)< n$.

Next choose 
\[
	s\in 
	\left\{
	\left\lceil\frac{1}{2}\binom{\tilde r+1}{2}\right\rceil
	,
	\left\lceil\frac{1}{2}\binom{\tilde r+1}{2}\right\rceil+1
	\right\}
\]
to have the same parity as $\binom{\tilde r+1}{2}$. It is easy to see that $3\leq s\leq \binom{\tilde r+1}{2}$ since $\tilde r\geq 2$. Then we set
\begin{equation}
\label{eq:d_r^2_64}
\begin{aligned}
	d&:=\frac{\binom{\tilde r+1}{2}-s}{2}
	\\
	&\geq \frac{\tilde r(\tilde r+1)-6}{8}
	\geq 
	\left\lfloor\frac{\tilde r^2}{12}\right\rfloor
	\\
	&\geq 
	\left\lfloor\frac{r^2}{96}\right\rfloor.
\end{aligned}
\end{equation}
Then $s=\binom{\tilde r+1}{2}-2d$ by definition of $d$, and $\binom{\tilde r+1}{2}+2d= \tilde r(\tilde r+1)-s < n$ holds as well. These estimates allow us to apply Lemma~\ref{lem:2pair} to $(\tilde r,s,d)$. In particular \eqref{eq:d_r^2_64} together with the assumption \eqref{eq:criterion-hypothesis} implies $d(\llambda_{(i)},\mmu_{(i)})\leq d$ for $1\leq i\leq k$. In tandem with the second part of Lemma~\ref{lem:neartensor}}, we conclude that
  % for some $C_0=O(1)$, 
 all irreducibles within blockwise distance at most $\lfloor s/2\rfloor$ {of the trivial representation} are contained in the tensor product 
\begin{equation}
\label{eq:tensor-s/2}
	\bigotimes_{{i\in [2\alpha_r]}}
	\left(\llambda_{(i)}\otimes \widetilde\llambda_{(i)}\right).
\end{equation}
In particular such tensor products contain everything in $\ttau_n^{\otimes \lfloor s/2\rfloor}$. As $s\geq 3$ we have $\lfloor s/2\rfloor \geq s/3$. {Since $\ttau_n^{\otimes n}$ covers $\Irrep(S_n)$ and \eqref{eq:tensor-s/2} holds for any set of $2\alpha_r$ indices $i$, we deduce that
\[
	\bigotimes_{i=1}^{2\alpha_{\tilde r}\lceil\frac{3n}{s}\rceil}
	\left(\llambda_{(i)}\otimes \widetilde\llambda_{(i)}\right).
\]
Using $s\geq \tilde r^2/4$ in the second step and $r^2\leq 2n$ in the last, we have
\begin{align*}
	2\alpha_{\tilde r}\left\lceil\frac{3n}{s}\right\rceil
	&\leq 
	\alpha_{\tilde r}\left(\frac{6n}{s}+2\right)
	\\
	&\leq 
	\alpha_{\tilde r}\left(\frac{24n}{\tilde r^2}+2\right)
	\\
	&\leq 
	\alpha_{\tilde r}\left(\frac{192 n}{r^2}+2\right)
	\\
	&\leq 
	\frac{196\alpha_{\tilde r}n}{r^2}.
\end{align*}
This completes the proof since $\alpha_{\tilde r}$ is uniformly bounded.
}
\end{proof}

\subsection{Proof of Corollary~\ref{cor:manyrowssuffices}}

\begin{proof}[Proof of Corollary~\ref{cor:manyrowssuffices}]

First we show that in either case, for fixed $m$ and sufficiently large $k=k(m)$, asymptotically almost surely there exist $m$ Young diagrams $\llambda_{(j_1)},\dots,\llambda_{(j_m)}$ inside an common $\varepsilon/10$ rescaled-blockwise metric ball {(for some $1\leq j_1<j_2<\dots <j_m\leq k$)}. We assume all of $\llambda_{(1)},\dots,\llambda_{(k)}$ are inside a $C\times C$ rescaled-box or are within $\frac{\varepsilon}{100}$ rescaled blockwise distance of the limit shape; each condition holds asymptotically almost surely in the respective case. 

In the first case, the set of continuous Young diagrams contained in a $C\times C$ box is relatively compact in the Hausdorff metric (viewed as subsets of the plane). This implies relative compactness in the rescaled blockwise metric as well. Indeed Minkowski-summing a small $\delta$ ball in the plane with a continuous Young diagram contained in a $C\times C$ box increases the area of the continuous Young diagram by at least $\Omega(\delta)$ and at most $O(C\delta)$, so that the two metrics are equivalent on the set of rescaled Young diagrams confined to a $C\times C$ box.

Taking $k$ to be $2m$ times the $\varepsilon/20$-rescaled blockwise covering number (finite by the above discussion) now suffices for the first case. In the second case the claim is immediate as asymptotically almost surely, all partitions are within $\varepsilon/20$ of the limiting shape.

{
Next we \textbf{claim} that there exists a pair among these nearby Young diagrams with $\Omega_{\varepsilon}(\sqrt{n})$ shared row lengths, for $m=m(\varepsilon)$ large enough. By removing this pair and repeating, this will imply (after e.g. doubling $m$) that at least $m/4$ such disjoint pairs can be formed. Applying Theorem~\ref{thm:criterion} now completes the proof assuming this claim (note that it suffices to apply Theorem~\ref{thm:criterion} to any subset of the $\llambda_{(1)},\dots,\llambda_{(k)}$).}

To show the above claim, we first focus on the case where all Young diagrams are confined to a $C\times C$ box, so we set $N=C\sqrt{n}$ and think of each $\llambda_{(j_i)}$ as a subset of $[N]$ via its distinct row lengths. The general fact we need is: given $10\alpha^{-1}$ subsets of $[N]$ with size at least $\alpha N$, there exist two with intersection at least $\frac{\alpha^2N}{2}$. This is well known to follow from a simple averaging argument over pairs of subsets. And indeed this fact with $\alpha=\varepsilon/C$ and $N=C\sqrt{n}$ suffices to show that there are pairs of Young diagrams among our $m$ with $\Omega(\varepsilon^2 {N}/C^2)$ shared distinct rows as desired, as long as $m\geq \frac{100C}{\varepsilon}$.

In the second case, the finish is identical after observing that at least $\frac{\varepsilon \sqrt{n}}{100}$ of the distinct row lengths of any $\llambda$ satisfying $\Rows(\llambda)\geq \varepsilon\sqrt{n}$ must be at most $\frac{100\sqrt{n}}{\varepsilon}$. This allows us to repeat essentially the same argument as above using $C=\frac{100}{\varepsilon}$.
\end{proof}

\subsection{Proof of Theorem~\ref{thm:randomproduct}}

Here we prove Theorem~\ref{thm:randomproduct} by checking the distinct rows condition of Corollary~\ref{cor:manyrowssuffices} in the Plancherel case.

\begin{lem}
There exists an absolute constant $\alpha>0$ such that a Plancherel random partition $\llambda\vdash n$ satisfies $\Rows(\llambda)=(\alpha+o(1)) \sqrt{n}$ asymptotically almost surely. 
\end{lem}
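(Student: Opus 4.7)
The plan is to realize $\Rows(\llambda)$ as a local linear statistic of the determinantal point process underlying Plancherel measure, compute its expectation via the bulk (discrete sine kernel) limit, and obtain concentration by a variance bound together with Chebyshev's inequality.

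First I would reformulate $\Rows(\llambda)$ combinatorially. With the convention that trailing parts are $0$, we have $\Rows(\llambda) = |\{i \geq 1 : \lambda_i > \lambda_{i+1}\}|$, i.e.\ $\Rows$ counts the removable corners of $\llambda$. Passing to the Maya / particle description $X(\llambda) = \{\lambda_i - i + \tfrac{1}{2} : i \geq 1\} \subset \mathbb{Z} + \tfrac{1}{2}$, the condition $\lambda_i > \lambda_{i+1}$ is equivalent to $p := \lambda_i - i + \tfrac{1}{2} \in X(\llambda)$ with $p - 1 \notin X(\llambda)$ (a particle with a hole immediately below it). Hence
\[
\Rows(\llambda) = \sum_{p \in \mathbb{Z}+1/2} \mathbf{1}\{p \in X,\; p-1 \notin X\}.
\]

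I would then invoke the determinantal description of Plancherel measure (Borodin--Okounkov--Olshanski and subsequent work, as surveyed in \cite{Borodin}): $X(\llambda)$ is a determinantal point process whose correlation kernel, after rescaling positions by $\sqrt{n}$, converges in the bulk to the discrete sine kernel of local density $\rho(u) = \tfrac{1}{\pi}\arccos(u/2)$ at scaled location $u \in (-2, 2)$ (and to the trivial densities $1$ and $0$ on $(-\infty, -2)$ and $(2, \infty)$). A two-point determinantal computation for the sine kernel gives
\[
\mathbb{P}(p \in X,\, p-1 \notin X) = \rho(1-\rho) + \frac{\sin^2(\pi \rho)}{\pi^2},
\]
which is strictly positive for $\rho \in (0,1)$. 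Summing over the bulk now yields $\mathbb{E}[\Rows(\llambda)] = (\alpha + o(1))\sqrt{n}$ with
\[
\alpha := \int_{-2}^{2} \left[\rho(u)(1-\rho(u)) + \frac{\sin^2(\pi \rho(u))}{\pi^2}\right] du \;>\; 0.
\]
For concentration I would bound $\mathrm{Cov}(\mathbf{1}_{A_p}, \mathbf{1}_{A_q})$ where $A_p = \{p \in X,\, p - 1 \notin X\}$. Each such covariance is a polynomial in the kernel values on the four sites $\{p-1, p, q-1, q\}$, and off-diagonal sine-kernel entries decay like $1/|p-q|$, so $|\mathrm{Cov}(\mathbf{1}_{A_p}, \mathbf{1}_{A_q})| = O(1/|p-q|^2)$ for $|p - q| \geq 2$. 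Summing over a bulk window of length $O(\sqrt{n})$ gives $\mathrm{Var}[\Rows(\llambda)] = O(\sqrt{n})$, and Chebyshev's inequality yields $\Rows(\llambda) = (\alpha + o(1))\sqrt{n}$ asymptotically almost surely.

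The main technical obstacle is the edge regime $|u| \approx 2$, where the sine-kernel asymptotics transition to Airy asymptotics. However, standard edge estimates (Baik--Deift--Johansson and Borodin--Okounkov--Olshanski) imply that the number of particles within any $n^{1/6 + o(1)}$ window of either edge is $o(\sqrt{n})$ in expectation, so the edge positions contribute only $o(\sqrt{n})$ to both $\mathbb{E}[\Rows(\llambda)]$ and $\mathrm{Var}[\Rows(\llambda)]$, and the bulk sine-kernel computation above is asymptotically exact.
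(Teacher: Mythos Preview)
Your proposal is correct and follows essentially the same approach as the paper: both reformulate $\Rows(\llambda)$ as a local particle/hole statistic of the Plancherel determinantal process, use the bulk discrete sine kernel asymptotics from \cite{Borodin} to compute the mean (your formula $\rho(1-\rho)+\sin^2(\pi\rho)/\pi^2$ coincides with the paper's $V(a)$), and conclude via a second-moment/Chebyshev argument after controlling the edge with Baik--Deift--Johansson. The only minor difference is that you extract a quantitative $O(1/|p-q|^2)$ covariance decay and hence $\mathrm{Var}=O(\sqrt{n})$, whereas the paper simply cites asymptotic independence from \cite{Borodin} to get $\mathrm{Var}=o(n)$.
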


\begin{proof}
We apply the 2nd moment method, relying on Theorem 3 in \cite{Borodin}. Following that paper we set 
\[
	D(\lambda)=D((a_1,a_2,\dots,a_{\ell}))=\{a_i-i\}_{i=1}^{\ell}\subseteq\mathbb Z
\]
{to be the descent set of $\llambda$.} We count distinct row lengths via the number of $i\in\mathbb Z$ with both $i\in D(\llambda)$ and $i+1\notin D(\llambda)$ (it is easy to see this gives an exact count up to additive error $1$ which we ignore). Their work shows that when $i=(a+o(1))\sqrt{n}$ for $a\in [-2,2]$ the event $I_i=\{i\in D(\llambda),i+1\notin D(\llambda)\}$ has probability
\[
\mathbb P[I_i]= \frac{\arccos(a/2)}{\pi}-\det\begin{pmatrix} \frac{\arccos(a/2)}{\pi} & \frac{\sin(\arccos(a/2))}{\pi} \\
 \frac{\sin(\arccos(a/2))}{\pi} &  \frac{\arccos(a/2)}{\pi}\end{pmatrix}+o(1) := V(a)+o(1).
 \]
 Moreover a simple consequence of the Baik-Deift-Johansson theorem \cite{BDJ} is that asympotically almost surely, $I_i=0$ for all $|i|\geq 2n^{1/2}+n^{1/4}$. It is easy to see that $V(a)>0$ when $a\in (-2,2)$. From the preceding discussion we have
\[
\left|\Rows(\llambda)- \widetilde\Rows(\llambda)\right| \leq n^{1/4}
\]
where we define $\widetilde\Rows(\llambda)$ by 
\[\widetilde\Rows(\llambda):=\sum_{{i}=-2\sqrt{n}}^{2\sqrt{n}} 1_{I_i}.
\]
Therefore we have
\[
\mathbb E[\widetilde\Rows(\llambda)] =\sum_{{i}=-2\sqrt{n}}^{2\sqrt{n}} \mathbb P[I_i]=\sqrt{n}\left(\int_{-2}^2 V(a)da \right)(1+o(1)).
\]
Moreover \cite{Borodin} shows that when $i-j=\omega(1)$ is unbounded, the events $I_i,I_j$ are asymptotically independent. This means by definition that for any $\delta>0$ there exist $N,k$ such that if $n\geq N$ and $|i-j|\geq k$, then $Cor(I_i,I_j)\leq \delta$. From this the variance of $\widetilde\Rows(\llambda)$ is easily seen to be sublinear:
\[
Var\left[\widetilde\Rows(\llambda)\right]=o(n).
\] 
Hence the Chebychev inequality proves the lemma for $\alpha=\int_{-2}^2 V(a)da$.
\end{proof}

\begin{proof}[Proof of Theorem~\ref{thm:randomproduct}]

It is known that both Plancherel and uniformly random Young diagrams each converge to a limit shape (\cite{LoganShepp,PlanchLimShape,UnifLimShape}). Uniformly random Young diagrams have $(\frac{\sqrt 6}{\pi}+o(1))\sqrt n$ distinct rows asymptotically almost surely (see \cite{wilf}, or \cite{UniformRows} for a central limit theorem). We just verified that Plancherel random Young diagams contain $\Omega(\sqrt n)$ distinct row lengths. Hence Corollary~\ref{cor:manyrowssuffices} applies, completing the proof.
\end{proof}

\section{Proof of Theorem~\ref{thm:powersquantitative}}

Here we prove Theorem~\ref{thm:powersquantitative}, showing that $\llambda^{\otimes t}$ covers $\Irrep(S_n)$ for $t\geq O\left(\frac{n\log n}{\log\dim(\llambda)}\right)$. Note that we may assume $n$ is sufficiently large throughout via Proposition~\ref{prop:smallpowers}. The proof is split into three cases:

\begin{enumerate}
	\item $\dim(\llambda)\geq K^n$ for a large constant $K$.
	\item $\dim(\llambda)\leq K^n$, and $\llambda$ has at least $2$ rows and/or columns with length $\Omega(n)$.
	\item $\dim(\llambda)\leq K^n$, and $\llambda$ has only $1$ row or column with length $\Omega(n)$.
\end{enumerate}

We will show that $\dim(\llambda)\leq K^n$ implies the existence of a row/column with length $\Omega(n)$, so that the above combined with conjugation symmetry cover all cases for large $n$. 

The main case is the first. Using the hooklength formula for $\dim(\llambda)$, we show that the Young diagram for $\llambda$ contains large subsets which are possibly at different height scales. We then use the semigroup property to transform these large subsets into single rectangles and then single squares at each scale, all while using a small number of tensor powers of $\llambda$. The tensor cube of a square contains a Young diagram with many distinct row lengths, and applying Corollary~\ref{cor:powersqualitative} to each one allows us to obtain control over an arbitrary Young diagram of appropriate size for each scale. Finally we show that horizontally summing these arbitrary Young diagrams results in a single Young diagram with many distinct row lengths and we again apply Corollary~\ref{cor:powersqualitative} to conclude. We remark that without combining scales, we would lose a logarithmic factor, as is typical in dyadic pigeonhole arguments. More precisely, if $\dim(\llambda)\approx n^{\varepsilon n}$ we would obtain a slightly suboptimal upper bound $O(\varepsilon^{-1}\log(\varepsilon^{-1}))$ whereas by using multiple scales we achieve the tight bound $\Theta(\varepsilon^{-1})$. 

Case 2 is relatively straightforward. Case 3 goes by breaking $\llambda$ into the horizontal sum of $\hat\llambda$ with a long horizontal strip, where $\hat\llambda$ has first row length equal to that of either its second row or longest column. A key step in case 3 is to apply one of the previous cases to $\hat\llambda$, which by construction cannot itself fall into case 3.

\subsection{Preparatory Lemmas}

Here we prove various lemmas, primarily for use in case 1. 

\begin{lem}
\label{lem:rectcube}

We have $c\big(\Rect(ab,a),\Rect(ab,a),\Rect(ab,a)\big)$ for any positive integers $a,b$. More generally we have $c\big(\Rect(ab+c,a)),\Rect(ab+c,a), \Rect(ab,a)\Hplus 1_{ac}\big)$.
\end{lem}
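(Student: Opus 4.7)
The plan is to reduce everything to the symmetric (square) case and then glue via the semigroup property. The crucial observation is that the square $Rect(a,a)$ is self-conjugate, so Lemma~\ref{lem:tensorcube} immediately gives $c(Rect(a,a),Rect(a,a),Rect(a,a))$. This will serve as the atomic Kronecker relation from which everything is built.

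For the first claim, I would note that an $a\times ab$ rectangle decomposes as a horizontal sum of $b$ copies of the $a\times a$ square, namely $Rect(ab,a)=\Hsum_{i=1}^{b} Rect(a,a)$, because horizontally summing $b$ squares of side $a$ adds their row lengths. Iterating the semigroup property (Theorem~\ref{thm:Hsum}) $b-1$ times on the atomic relation above then yields $c(Rect(ab,a),Rect(ab,a),Rect(ab,a))$.

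For the general statement, I would write $Rect(ab+c,a)=Rect(ab,a)\Hplus Rect(c,a)$ and establish separately the easy relation $c(Rect(c,a),Rect(c,a),1_{ac})$. This follows from the universal fact $c(\llambda,\llambda,1_{|\llambda|})$ for every $\llambda$: indeed $g(\llambda,\llambda,1_n)=\langle\chi^\llambda,\chi^\llambda\rangle=1$ since the characters of $S_n$ are real, so $\llambda\otimes\llambda$ always contains the trivial representation. Applying the semigroup property to combine this with the first claim gives
\[
c\bigl(Rect(ab,a)\Hplus Rect(c,a),\;Rect(ab,a)\Hplus Rect(c,a),\;Rect(ab,a)\Hplus 1_{ac}\bigr),
\]
which is exactly $c(Rect(ab+c,a),Rect(ab+c,a),Rect(ab,a)\Hplus 1_{ac})$.

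There is no real obstacle; the argument is a clean application of the two tools already in hand, with the only thing to verify being the decomposition $Rect(ab+c,a)=Rect(ab,a)\Hplus Rect(c,a)$ of row lengths and the trivial remark about $\llambda\otimes\llambda$. The slightly subtle point to keep in mind is that the semigroup property is being used on a third coordinate that is \emph{not} itself a square or rectangle — the glued diagram $Rect(ab,a)\Hplus 1_{ac}$ has one long top row sticking out — but this is exactly what horizontal summation produces from the two pieces, so no extra care is needed.
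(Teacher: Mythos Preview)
Your proposal is correct and follows essentially the same route as the paper: invoke Lemma~\ref{lem:tensorcube} on the self-conjugate square $Rect(a,a)$, iterate the horizontal semigroup property $b-1$ times to obtain the first claim, and then perform one further horizontal sum with the trivial relation $c(Rect(c,a),Rect(c,a),1_{ac})$ for the general statement. The only additional content you supply is the explicit justification of $c(\llambda,\llambda,1_{|\llambda|})$ via $\langle\chi^{\llambda},\chi^{\llambda}\rangle=1$, which the paper leaves implicit.
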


\begin{proof}
The case $b=1$ for the first part is immediate from Lemma~\ref{lem:tensorcube} since squares are symmetric. For larger $b$, we apply the horizontal semigroup property repeatedly. The second part follows from another horizontal sum with $c\big(\Rect(c,a),\Rect(c,a),1_{ac}\big)$.
 \end{proof}

\begin{lem}

\label{lem:rectsquare}

For any positive integers $(x,y,z)$, 
\[
c\big(\Rect(xyz,xz),\Rect(xyz,xz),\Rect(yz^2,x^2)\big).
\]

\end{lem}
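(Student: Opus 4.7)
The plan is to build the desired Kronecker relation in two stages of semigroup applications, starting from the observation that $Rect(x,x)$ is a self-conjugate partition. Consequently the sign representation $1^{x^2}$ appears in $Rect(x,x)^{\otimes 2}$, i.e.
\[c(Rect(x,x),\, Rect(x,x),\, 1^{x^2})\]
holds. (Indeed $g(\lambda,\lambda,1^n)$ equals $1$ when $\lambda = \lambda'$ and vanishes otherwise, which is immediate from $\langle \chi^\lambda\chi^\lambda,\mathrm{sign}\rangle = \langle \chi^\lambda,\chi^{\lambda'}\rangle$.)

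For the first stage, I would apply Corollary~\ref{cor:vsum} iteratively to combine $z$ copies of this base relation using a V-V-H pattern (vertical sum in the first two slots, horizontal sum in the third; this is permitted since $2$ is even). Vertically summing $z$ copies of $Rect(x,x)$ merges the row multisets and produces $Rect(x,xz)$, while horizontally summing $z$ copies of $1^{x^2}$ produces $Rect(z,x^2)$. Hence we obtain the intermediate relation
\[c(Rect(x,xz),\, Rect(x,xz),\, Rect(z,x^2)).\]

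For the second stage, I would apply Theorem~\ref{thm:Hsum} iteratively to combine $yz$ copies of this intermediate relation using horizontal summation in all three slots. Horizontally summing $yz$ copies of $Rect(x,xz)$ gives $Rect(xyz,xz)$ (the $xz$ rows are preserved while row lengths multiply by $yz$), and horizontally summing $yz$ copies of $Rect(z,x^2)$ gives $Rect(yz^2,x^2)$. This produces the desired $c(Rect(xyz,xz),\, Rect(xyz,xz),\, Rect(yz^2,x^2))$.

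The main subtlety is spotting the correct two-level decomposition: the outer layer is $Rect(xyz,xz) = Rect(x,xz)^{\Hplus yz}$ and $Rect(yz^2,x^2) = Rect(z,x^2)^{\Hplus yz}$, while the inner layer is $Rect(x,xz) = Rect(x,x)^{\Vplus z}$ and $Rect(z,x^2) = (1^{x^2})^{\Hplus z}$. The first decomposition is compatible with a pure H-H-H semigroup step, and the second requires the even-parity V-V-H step. Once these are identified, the proof is a routine double invocation of the semigroup property; notably, it appeals to neither Lemma~\ref{lem:rectcube} nor the fourth power Saxl theorem.
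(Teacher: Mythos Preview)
Your proof is correct and essentially the same as the paper's: both start from $c(Rect(x,x),Rect(x,x),1^{x^2})$ via self-conjugacy, then apply two stages of the semigroup property (one all-horizontal, one V-V-H). The only difference is that you perform the V-V-H stage first ($z$ copies) and then the horizontal stage ($yz$ copies), whereas the paper does the horizontal stage first ($yz$ copies, yielding $c(Rect(xyz,x),Rect(xyz,x),Rect(yz,x^2))$) and then the V-V-H stage ($z$ copies); the two orderings commute and the argument is otherwise identical.
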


\begin{proof}

Because $\Rect(x,x)$ is symmetric, we have 
\[
c\big(\Rect(x,x),\Rect(x,x),\Rect(1,x^2)\big).
\]
Horizontally summing this relation $yz$ times with itself gives $c\big(\Rect(xyz,x),\Rect(xyz,x),\Rect(yz,x^2)\big)$. Vertically summing the first two components and horizontally summing the last component $z$ times with each of themselves gives the lemma.
\end{proof}

\begin{figure}[h]
\begin{framed}
% \[c\begin{pmatrix}\ydiagram{6,6},\ydiagram{6,6},\ydiagram{3,3,3,3}\end{pmatrix}\]

\[c\begin{pmatrix}\ydiagram[*(green)]{2,2,0,0} *[*(red)]{2+2,2+2,0,0} *[*(blue)]{4+2,4+2,0,0},\ydiagram[*(green)]{2,2,0,0} *[*(red)]{2+2,2+2,0,0} *[*(blue)]{4+2,4+2,0,0},\ydiagram[*(green)]{1,1,1,1} *[*(red)]{1+1,1+1,1+1,1+1} *[*(blue)]{2+1,2+1,2+1,2+1}\end{pmatrix}\] 

\caption{In the main case $z=1$, Lemma~\ref{lem:rectsquare} states that the tensor square of a rectangle of height $x$ contains a rectangle of height $x^2$. Here we illustrate the case $(x,y,z)=(2,3,1)$. As usual the colors indicate how the semigroup property was applied.}
\end{framed}
\end{figure}
\begin{rem}

Lemma~\ref{lem:rectsquare} is in a sense the most efficient way to find Young diagrams with many rows in a tensor power of diagrams with few rows. The beautiful paper \cite{dvir} shows that the most total rows in any constituent of $\llambda\otimes \mmu$ exactly equals the number of blocks in the intersection of $\llambda$ and the transpose $\mmu'$, when they are overlayed with upper-left corners in the same location. In particular $c\big(\llambda,\mmu,\nnu\big)$ implies $\height(\nnu)\leq \height(\llambda)\cdot \height(\mmu)$, and Lemma~\ref{lem:rectsquare} is an equality case when $z=1$.

\end{rem}

The next lemma directly applies Lemma~\ref{lem:rectsquare} to show that a small number of tensor powers suffice to turn a rectangle into a square. We remark that in the main proof we round row and column lengths to powers of $16$ and not $2$ purely for the convenience of this lemma, which would otherwise have bothersome parity issues in the statement and proof. 

\begin{lem}
\label{lem:need16}
For any $b\geq a$ let $\llambda=\Rect(2^{4b},2^{4a})$. Then for all $t\geq O\left(\frac{b}{a}\right)$, the tensor power $\llambda^{\otimes 2t}$ contains $\Rect(2^{2a+2b},2^{2a+2b})$.

\end{lem}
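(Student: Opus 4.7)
The plan is to apply Lemma~\ref{lem:rectsquare} iteratively, first with parameter $z = 1$ to repeatedly square the height while dividing the width, and then once more with a carefully chosen $z \geq 1$ to produce the exact target square. Each application of the lemma uses one tensor square of the current rectangle, so after $k$ total iterations we have consumed $2^k$ copies of $\llambda$; the aim is to show $k = O(\log(b/a))$ iterations suffice, giving $2^k = O(b/a)$ tensor factors.

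First I would iterate the $z = 1$ case of Lemma~\ref{lem:rectsquare}, which turns any $Rect(W, H)$ with $H \mid W$ into $Rect(W/H, H^2)$. Applying this inductively to $R_0 = \llambda$ yields
\[
R_i = Rect\bigl(2^{4b - 4a(2^i - 1)},\, 2^{4a\cdot 2^i}\bigr),
\]
where the divisibility hypothesis $H_i \mid W_i$ holds precisely when $2a\cdot 2^i \leq a + b$. Let $i^*$ denote the largest such $i$, so that $2^{i^*+1} \leq (a+b)/a \leq 2b/a$.

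To finish, I would apply Lemma~\ref{lem:rectsquare} once more to $R_{i^*}$ with $z = 2^\zeta$ for $\zeta = 4a \cdot 2^{i^*} - (a+b)$. A short calculation shows this is the unique choice (solving $z^4 = H^3/W$) making the two output dimensions equal, and both turn out to be exactly $2^{2(a+b)}$ as required. One checks directly that $\zeta$ is a nonnegative integer with $2^\zeta$ dividing $2^{4a \cdot 2^{i^*}}$, both facts following from the choice of $i^*$. This last application brings the total cost to $2^{i^* + 1} \leq 2b/a$ copies of $\llambda$. Finally, since $\llambda \otimes \llambda$ always contains the trivial representation (symmetric-group characters being real), tensoring with extra copies of $\llambda \otimes \llambda$ preserves the target square, so the conclusion holds for all $t \geq b/a$.

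The main technical obstacle is the divisibility and integrality bookkeeping: one must verify $H_i \mid W_i$ at every iteration and that $\zeta$ is a nonnegative integer not exceeding the height exponent in the final step. This is precisely why the lemma takes side lengths that are powers of $2^4 = 16$ rather than arbitrary powers of $2$: the extra factor of $4$ supplies exactly the slack needed to make $\zeta = 4a\cdot 2^{i^*} - a - b$ an integer, matching the remark before the lemma statement.
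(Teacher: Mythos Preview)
Your proof is correct and follows essentially the same approach as the paper: iterate the $z=1$ case of Lemma~\ref{lem:rectsquare} until the height and width exponents are close enough, then apply the lemma once more with the unique $z$ producing the target square $Rect(2^{2(a+b)},2^{2(a+b)})$. Your parameters $(x,y,z)=(2^{a+b},\,2^{4(a+b)-8a\cdot 2^{i^*}},\,2^{4a\cdot 2^{i^*}-(a+b)})$ coincide exactly with the paper's choice $(2^{u+v},2^{4v-4u},2^{3u-v})$ under the substitution $u=a\cdot 2^{i^*}$, $v=a+b-u$.
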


\begin{proof}

We first remark that adding arbitrary even integers to a tensor exponent cannot hurt the statement, as $\llambda^{\otimes t}\subseteq \llambda^{\otimes t+2}$ holds for any $t\geq 0$. However we do need to take care that the exponent is even.

We apply Lemma~\ref{lem:rectsquare} with $z=1$ at most $\lfloor \log_2\left({\frac{b}{a}}\right)\rfloor$ times to repeatedly square the height of the rectangle: it shows $c\big(\Rect(2^{i+j},2^{i}),\Rect(2^{i+j},2^{i}),\Rect(2^{j},2^{2i})\big)$ for any non-negative integers $i,j$. Doing this as long as possible we obtain 
\begin{equation}
\label{eq:rect-recursion}
\Rect(2^{4v},2^{4u})\in \left(\Rect(2^{4b},2^{4a})^{\otimes \big(\frac{b}{a}+O(1)\big)}\right)^{\otimes 2} 
\end{equation}
for $(u,v)$ satisfying $u\leq v\leq 2u$ and $u+v=a+b$. Applying Lemma~\ref{lem:rectsquare} once more using 
\[
(x,y,z)=\left(2^{u+v},2^{4v-4u},2^{3u-v}\right)
\]
gives 
\[
c\big(\Rect(2^{4v},2^{4{u}}),\Rect(2^{4v},2^{4{u}}),\Rect(2^{2u+2v},2^{2u+2v})\big).
\]
{Since $\Rect(2^{2u+2v},2^{2u+2v})=\Rect(2^{2a+2b},2^{2a+2b})$, combining with \eqref{eq:rect-recursion} completes the proof.
}
\end{proof}

The next few lemmas show how to turn a square into an arbitrary Young diagram with a few more tensor powers.

\begin{lem}

\label{lem:squarecube}

$\Rect(2k,2k)^{\otimes 3}$ contains a Young diagram $\llambda$ with $\Rows(\llambda)=2k$.

\end{lem}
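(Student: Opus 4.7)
The plan is to exhibit an explicit $\llambda\vdash 4k^2$ with $\Rows(\llambda)=2k$ inside $Rect(2k,2k)^{\otimes 3}$. I take $\llambda := 1^{2k}\Hplus \mmu$, where $\mmu := \vvarrho_{2k}\Hplus 1_{2k^2-3k} = (2k^2-k,\,2k-1,\,2k-2,\,\ldots,\,1)$ is a partition of $4k^2-2k$ with $2k$ distinct rows (its top row $2k^2-k$ is, for $k\ge 2$, strictly larger than all other entries $\{2k-1,\ldots,1\}$, and the remaining $2k-1$ entries are themselves distinct). Then $\llambda = (2k^2-k+1,\,2k,\,2k-1,\,\ldots,\,2)$ is a partition of $4k^2$ with exactly $2k$ distinct row lengths.

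The key structural identity is the vertical decomposition $Rect(2k,2k) = 1_{2k}\Vplus Rect(2k,2k-1)$. Applying Corollary~\ref{cor:vsum} with vertical summation in the first two slots and horizontal summation in the last two to the two $4$-term Kronecker relations
\[c\bigl(1_{2k},\,1_{2k},\,1^{2k},\,1^{2k}\bigr)\quad\text{(in $S_{2k}$)}\]
and
\[c\bigl(Rect(2k,2k-1),\,Rect(2k,2k-1),\,Rect(2k-1,2k),\,\mmu\bigr)\quad\text{(in $S_{2k(2k-1)}$)}\]
produces the target relation $c\bigl(Rect(2k,2k),\,Rect(2k,2k),\,Rect(2k,2k),\,1^{2k}\Hplus\mmu\bigr)$ in $S_{4k^2}$, which places $\llambda=1^{2k}\Hplus\mmu$ in $Rect(2k,2k)^{\otimes 3}$. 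The first relation is immediate since $1_{2k}\otimes 1_{2k}\otimes 1^{2k}\otimes 1^{2k} = 1_{2k}$, and the slot-by-slot combination works because $1_{2k}\Vplus Rect(2k,2k-1) = Rect(2k,2k)$ (vertical sum unions row multisets) and $1^{2k}\Hplus Rect(2k-1,2k) = Rect(2k,2k)$ (horizontal sum adds same-ranked row lengths $1+(2k-1)=2k$).

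The main obstacle is verifying the subsidiary relation $c(Rect(2k,2k-1),Rect(2k,2k-1),Rect(2k-1,2k),\mmu)$. By the Kronecker symmetry that conjugating any even subset of slots preserves validity, this is equivalent to $\mmu\in Rect(2k-1,2k)^{\otimes 3}$, an instance of essentially the same type of statement for a near-square rectangle of slightly smaller size. I expect this to yield to a recursive argument using the analogous decomposition $Rect(2k-1,2k) = 1_{2k-1}\Vplus Rect(2k-1,2k-1)$ together with $c(Rect(2k-1,2k-1),Rect(2k-1,2k-1),Rect(2k-1,2k-1))$ provided by Lemma~\ref{lem:tensorcube} (since the genuine square $Rect(2k-1,2k-1)$ is self-conjugate), possibly in combination with identities from Lemmas~\ref{lem:rectcube} and~\ref{lem:rectsquare}. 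Careful bookkeeping of sizes is required to ensure that at each recursive step the final slot carries the intended partition and that the recursion terminates; the choice of $\mmu=\vvarrho_{2k}\Hplus 1_{2k^2-3k}$ is designed precisely so that its conjugate $\mmu'=\vvarrho_{2k}\Vplus 1^{2k^2-3k}$ is a staircase-plus-column shape amenable to this recursive semigroup construction.
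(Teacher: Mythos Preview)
Your proposal has a genuine gap: you never establish the relation $c\bigl(Rect(2k,2k-1),\,Rect(2k,2k-1),\,Rect(2k-1,2k),\,\mmu\bigr)$, which is the entire content of the argument. You reduce the lemma to this relation and then write that you ``expect this to yield to a recursive argument'' --- but you do not carry out the recursion, and on inspection the recursion does not close up cleanly. If you try to peel off a row of $Rect(2k-1,2k)$ by the same vertical/vertical/horizontal/horizontal pattern, the third slot must be written as $1^{2k-1}\Hplus(\text{something})$ that equals $Rect(2k-1,2k)$; but $Rect(2k-1,2k)$ has $2k$ rows while $1^{2k-1}$ has only $2k-1$, so the horizontal subtraction leaves a non-partition $(2k-2,\ldots,2k-2,2k-1)$. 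More fundamentally, your ``reduction'' has not reduced anything: you started with the task of finding \emph{some} $\llambda$ with $2k$ distinct rows in $Rect(2k,2k)^{\otimes 3}$, and you have traded it for the task of showing a \emph{specific} $\mmu$ with $2k$ distinct rows lies in $Rect(2k-1,2k)^{\otimes 3}$, which is a strictly harder statement of the same type.

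The paper's proof avoids any recursion by working from the bottom up. It starts from the two easy relations $(2,2)\in Rect(2,2)^{\otimes 3}$ and $(4)\in Rect(2,2)^{\otimes 3}$, horizontally sums $k$ of them in varying proportions $j:(k-j)$ to get $c\bigl(Rect(2k,2),Rect(2k,2),Rect(2k,2),(2k+2j,2k-2j)\bigr)$ for each $j=1,\dots,k$, and then vertically sums these $k$ four-term relations (an even number of slots, so all four may be summed vertically). The first three slots stack to $Rect(2k,2k)$, and the fourth slot becomes $\Vsum_{j=1}^k (2k+2j,2k-2j)$, which visibly has $2k$ distinct row lengths. This is a complete, two-step argument with no unresolved subclaims.
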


\begin{proof}

We have $(2,2),(4)\in \Rect(2,2)^{\otimes 2}\subseteq \Rect(2,2)^{\otimes 3}$. Therefore we have 
\[
c\big(\Rect(2k,2),\Rect(2k,2),\Rect(2k,2),\mmu\big)
\]
where $\mmu$ is any horizontal sum of $k$ Young diagrams which are either $(2,2)$ or $(4)$. Taking $\mmu_{(j)}$ to consist of $j$ copies of $(4)$ and $k-j$ copies of $(2,2)$ for $j\in \{1,2,\dots,k\}$, we have $\mmu_{(j)}=(2k+2j,2k-2j)$. Vertically summing all $k$ relations $c\big(\Rect(2k,2),\Rect(2k,2),\Rect(2k,2),\mmu_{(j)}\big)$ for each $j\in \{1,2,\dots,k\}$, which is possible since $4$ is even, we obtain
\[
c\left(\Rect(2k,2k),\Rect(2k,2k),\Rect(2k,2k),\Vsum_{j=1}^k \mmu_{(j)}\right).
\]
It is easy to see that $\Rows(\Vsum_{i=1}^k \mmu_{(j)})=2k$ as desired.
\end{proof}

\begin{figure}[h]
\begin{framed}
\[
c\begin{pmatrix}\ydiagram[*(darkgreen)]{2,2} *[*(lightgreen)]{2+2,2+2} *[*(green)]{4+2,4+2}& \ydiagram[*(darkgreen)]{2,2} *[*(lightgreen)]{2+2,2+2} *[*(green)]{4+2,4+2} & \ydiagram [*(darkgreen)]{2,2} *[*(lightgreen)]{2+2,2+2} *[*(green)]{4+2,4+2} & \ydiagram[*(darkgreen)]{4} *[*(lightgreen)]{4+4} *[*(green)]{8+4} \end{pmatrix}
\]
\[\Large \Vplus\]
\[c\begin{pmatrix}\ydiagram[*(darkred)]{2,2} *[*(lightred)]{2+2,2+2} *[*(red)]{4+2,4+2}& \ydiagram[*(darkred)]{2,2} *[*(lightred)]{2+2,2+2} *[*(red)]{4+2,4+2} & \ydiagram[*(darkred)]{2,2} *[*(lightred)]{2+2,2+2} *[*(red)]{4+2,4+2} & \ytableaushort{\none \none \none \none \none \none \none \none \none \none \none \none }*[*(darkred)]{4} *[*(lightred)]{4+4} *[*(red)]{8+2,2}
\end{pmatrix}\]
\[\Large \Vplus\]
\[c\begin{pmatrix}\ydiagram[*(darkblue)]{2,2} *[*(lightblue)]{2+2,2+2} *[*(blue)]{4+2,4+2}& \ydiagram[*(darkblue)]{2,2} *[*(lightblue)]{2+2,2+2} *[*(blue)]{4+2,4+2} & \ydiagram[*(darkblue)]{2,2} *[*(lightblue)]{2+2,2+2} *[*(blue)]{4+2,4+2} & \ytableaushort{\none \none \none \none \none \none \none \none \none \none \none \none }*[*(darkblue)]{4} *[*(lightblue)]{4+2,2} *[*(blue)]{6+2,2+2}\end{pmatrix}\]
\[\Large || \]
\[c\begin{pmatrix}\ydiagram[*(green)]{6,6,0,0,0,0} *[*(red)]{0,0,6,6,0,0} *[*(blue)]{0,0,0,0,6,6} & \ydiagram[*(green)]{6,6,0,0,0,0} *[*(red)]{0,0,6,6,0,0} *[*(blue)]{0,0,0,0,6,6}&  \ydiagram[*(green)]{6,6,0,0,0,0} *[*(red)]{0,0,6,6,0,0} *[*(blue)]{0,0,0,0,6,6}&  \ydiagram[*(green)]{12,0,0,0,0} *[*(red)]{0,10,0,0,2} *[*(blue)]{0,0,8,4,0}\end{pmatrix}\]

\caption{An illustration of Lemma~\ref{lem:squarecube} for $k=3$. The green, red, and blue each show a single $\mmu_{(j)}$ together with three $\Rect(2k,2)$ shapes. The $k$ shades of each color indicate how the relations are obtained via horizontal summation. We then vertically sum these $k$ relations (note that there are an even number of diagrams in each relation). We obtain three $\Rect(2k,2k)$ squares together with a fourth shape with $2k$ distinct row lengths, proving the lemma.}
\end{framed}
\end{figure}

\begin{lem}
\label{lem:rectcover}
Let $\llambda=\Rect(2^{4b},2^{4a})\Hplus 1_m \vdash n$ for $b\geq a\geq 1$. Then for $t\geq O\left(\frac{bn}{a2^{4a+4b}}\right)$, {the tensor power $\llambda^{\otimes t}$} covers $\Irrep(S_n)$.
\end{lem}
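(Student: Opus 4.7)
My plan is to stitch together Lemma~\ref{lem:need16}, Lemma~\ref{lem:squarecube}, and Corollary~\ref{cor:powersqualitative} via the semigroup property, carrying the horizontal strip $1_m$ along as a trivial rider. Write $R = Rect(2^{4b},2^{4a})$ so that $\llambda = R\Hplus 1_m$.

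First, I apply Lemma~\ref{lem:need16} to obtain the square $Rect(2^{2a+2b},2^{2a+2b})$ as a constituent of $R^{\otimes 2s}$ for some $s = O(b/a)$. Then I apply Lemma~\ref{lem:squarecube} with $k = 2^{2a+2b-1}$ to produce a Young diagram $\nu\vdash 2^{4a+4b}$ with $\Rows(\nu) = 2^{2a+2b}$ lying inside $Rect(2^{2a+2b},2^{2a+2b})^{\otimes 3}$. Composing these two steps, $\nu \in R^{\otimes t_0}$ for $t_0 = 2s+3 = O(b/a)$.

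Next, I attach the $1_m$ strip. Since $c(1_m,1_m,\dots,1_m)$ holds trivially with any number of arguments, horizontally summing it via the semigroup property with $c(R,R,\dots,R,\nu)$ produces $c(\llambda,\llambda,\dots,\llambda,\nu\Hplus 1_m)$, i.e.\ $\nu\Hplus 1_m\in\llambda^{\otimes t_0}$. The key observation is that $\Rows(\nu\Hplus 1_m)\geq \Rows(\nu) = 2^{2a+2b}$, since horizontally summing with a single row $1_m$ only lengthens the longest row of $\nu$ and so cannot decrease the number of distinct row lengths. Note also that $\nu\Hplus 1_m\vdash 2^{4a+4b}+m = n$.

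Finally, I apply Corollary~\ref{cor:powersqualitative} to $\nu\Hplus 1_m$ with $r = 2^{2a+2b}\geq 2$: the tensor power $(\nu\Hplus 1_m)^{\otimes t_1}$ covers $\Irrep(S_n)$ for some $t_1 = O(n/2^{4a+4b})$. Chaining this with $\nu\Hplus 1_m\in\llambda^{\otimes t_0}$ gives that $\llambda^{\otimes t_0 t_1}$ covers $\Irrep(S_n)$, and
\[
t_0 t_1 \;=\; O\!\left(\frac{bn}{a\cdot 2^{4a+4b}}\right)
\]
matches the target bound. I do not expect any step to be the main obstacle, as the heavy lifting has already been done in Lemmas~\ref{lem:need16} and~\ref{lem:squarecube}; the only real content of this proof is the observation that horizontal summation with $1_m$ preserves the distinct-rows count, allowing Corollary~\ref{cor:powersqualitative} to be invoked at the end.
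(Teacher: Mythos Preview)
Your argument is correct and follows essentially the same approach as the paper: apply Lemma~\ref{lem:need16} to reach a square, then Lemma~\ref{lem:squarecube} to produce many distinct rows, carry the $1_m$ along via the semigroup property, and finish with Corollary~\ref{cor:powersqualitative}. One small arithmetic slip: composing ``square $\in R^{\otimes 2s}$'' with ``$\nu\in\text{square}^{\otimes 3}$'' gives $\nu\in R^{\otimes 6s}$, not $R^{\otimes 2s+3}$, since tensor exponents multiply under substitution rather than add---but $6s = O(b/a)$ just the same, so your final bound is unaffected.
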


\begin{proof}
{Lemma~\ref{lem:need16}} implies that
\[
\Rect(2^{2a+2b},2^{2a+2b})\Hplus 1_m~\subseteq \llambda^{\otimes 2\cdot O(b/a)}.  
\]
Tensor cubing both sides and applying Lemma~\ref{lem:squarecube}, we find that $\llambda^{\otimes 2\cdot O(b/a)}$ contains {some $\mmu$ with $\Rows(\mmu)\geq2^{2a+2b}\geq 2$. The result now follows by applying Corollary~\ref{cor:powersqualitative} to $\mmu$.} (Note that the parity issues in the exponent disappear after $\Irrep(S_n)$ has been covered.)
\end{proof}

\begin{lem}
\label{lem:krectsinside}

Let $\llambda=(a_1,\dots, a_j)$ and fix $k\geq 1$. We can write
\[
	\llambda=\Vsum_{i=1}^{\lceil j/k\rceil}\left( \Rect(kh_i,k)\Hplus {\nnu_{(i)}} \right)
\]
where $\sum_{i=1}^{\lceil j/k\rceil} |{\nnu_{(i)}}|\leq k(a_1+j)$.
\end{lem}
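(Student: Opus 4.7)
The plan is to partition the rows of $\llambda$ into $\lceil j/k \rceil$ consecutive blocks of up to $k$ rows and, in each block, peel off a rectangle of height $k$ whose width is the smallest row length in that block; the leftover becomes $\nnu_i$. Concretely, I would extend the sequence $(a_1, \dots, a_j)$ by trailing zeros to length $\lceil j/k \rceil k$, set $h_i = a_{ik}$ for each $1 \leq i \leq \lceil j/k\rceil$, and let $\nnu_i$ be the partition with row lengths $(a_{(i-1)k+1} - h_i, \dots, a_{ik} - h_i)$. Then $Rect(kh_i, k) \Hplus \nnu_i$ has row-length multiset $\{a_{(i-1)k+1}, \dots, a_{ik}\}$, and $\Vsum_i$ unions these multisets and returns $\llambda$ (the artificial zero rows disappear in the multiset).

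To bound $\sum_i |\nnu_i|$, I would introduce the consecutive differences $d_\ell = a_\ell - a_{\ell+1} \geq 0$, extended to $\ell \in \{1,\dots,\lceil j/k\rceil k\}$ with $d_\ell = 0$ for $\ell \geq j$. Using the telescoping identity $a_\ell - a_{ik} = \sum_{s = \ell}^{ik-1} d_s$ and swapping the order of summation gives
\[
|\nnu_i| = \sum_{\ell = (i-1)k+1}^{ik} (a_\ell - a_{ik}) = \sum_{s = (i-1)k+1}^{ik-1} (s - (i-1)k)\, d_s \leq (k-1) \sum_{s = (i-1)k+1}^{ik-1} d_s.
\]
Summing over $i$ and using $\sum_s d_s \leq a_1$ (one more telescoping) yields $\sum_i |\nnu_i| \leq (k-1) a_1$, which is well within the stated bound $k(a_1 + j)$.

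I do not expect any real obstacle. The only care needed is the treatment of the last, possibly partial, block when $k \nmid j$; padding the row sequence by zeros lets the uniform argument above run without alteration. Alternatively one could bound the last block's contribution separately by $|\nnu_m| \leq k\, a_{(m-1)k+1} \leq k a_1$, which is precisely the sort of slack the extra $kj$ summand in the stated bound was designed to absorb.
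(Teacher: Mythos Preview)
Your overall strategy---chop $\llambda$ into consecutive blocks of $k$ rows, peel off from each block a rectangle of height $k$ and width equal to the block's shortest row, and let $\nnu_i$ be the residue---is exactly what the paper does. But there is a genuine slip in how you match this to the lemma's notation, and it is not merely cosmetic.

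You set $h_i = a_{ik}$. With the paper's convention $Rect(a,b)=(a,\dots,a)\vdash ab$, the shape $Rect(kh_i,k)$ then has $k$ rows each of length $k a_{ik}$, not $a_{ik}$. So $Rect(kh_i,k)\Hplus\nnu_i$ has row $\ell$ equal to $(k-1)a_{ik}+a_{(i-1)k+\ell}$, and your claim that this recovers the multiset $\{a_{(i-1)k+1},\dots,a_{ik}\}$ is false for $k\geq 2$. What you actually want is a rectangle of width $a_{ik}$, i.e.\ $h_i=a_{ik}/k$, but the lemma asks for $Rect(kh_i,k)$ with integer $h_i$, so the width must be a \emph{multiple of $k$}. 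In general $k\nmid a_{ik}$, so your intended construction does not fit the statement.

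The repair is exactly the paper's choice: take $h_i=\lfloor a_{ik}/k\rfloor$, so the rectangle has width $k\lfloor a_{ik}/k\rfloor\le a_{ik}$, and absorb the extra $a_{ik}-k\lfloor a_{ik}/k\rfloor<k$ per row into $\nnu_i$. This rounding contributes at most $k$ to each of the (at most $j$) rows of all the $\nnu_i$ combined, i.e.\ at most $kj$ in total. Added to your telescoping bound $(k-1)a_1$ for the ``within-block variation'' part, this gives $\sum_i|\nnu_i|\le (k-1)a_1+kj\le k(a_1+j)$, which is precisely the stated bound and explains the $+j$ term you found superfluous. Once you make this one-line correction your argument coincides with the paper's.
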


\begin{proof}

For each $1\leq i\leq \lceil j/k\rceil$, simply take $h_i=\lfloor \frac{a_{ki}}{k}\rfloor$ and 
\[
{\nnu_{(i)}}=(a_{k(i-1)+1}-kh_i,a_{k(i-1)+2}-kh_i,\dots,a_{ki}-kh_i).
\]
Since $a_1\geq a_2\geq \dots$,
\[
\sum_{i=1}^{\lceil j/k\rceil} |{\nnu_{(i)}}| 
\leq 
\left(\sum_{i=1}^{\lceil j/k\rceil} a_{ki}-kh_{i}\right) 
+ 
k\left(\sum_{i=1}^{\lceil j/k\rceil} a_{k(i-1)+1}-a_{ki}\right).
\]
The first term on the right-hand size is a sum of at most $j$ numbers which are each at most $k$. The second term is at most $ka_1$ by telescoping. The remainder of $\llambda$ can be written as rectangles as in the lemma statement.
\end{proof}

\begin{lem}

\label{lem:finish}

Let $\sum_{j=1}^J n_j=n$. Then there exist $\llambda_{(n_j)}\vdash n_j$ such that $\llambda:=\Hsum_{j=1}^J \llambda_{(n_j)}\vdash n$ has $\Rows(\llambda)\geq \sqrt{2n}-10J$.

\end{lem}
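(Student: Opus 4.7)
The plan is to construct each $\llambda_{(n_j)}$ explicitly through its column-length multiset, and then rely on two facts already recorded in the paper: (i) the column-length multiset of a horizontal sum $\llambda \Hplus \mmu$ is the disjoint union of the column-length multisets of $\llambda$ and $\mmu$, and (ii) $\Rows(\llambda)$ equals the number of distinct column lengths of $\llambda$. So I want to choose the $\llambda_{(n_j)}$ so that the union of their column-length multisets contains $\{1, 2, \ldots, s\}$ for some integer $s \geq \sqrt{2n} - 10J$; the result then follows.

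I proceed greedily. Set $s_0 := 0$, and for $j = 1, 2, \ldots, J$ (the order does not matter) let $r_j$ be the largest non-negative integer with
\[
r_j \, s_{j-1} + \binom{r_j+1}{2} \leq n_j,
\]
and put $s_j := s_{j-1} + r_j$. Define $\llambda_{(n_j)}$ to be the partition whose column lengths, listed in decreasing order, read
\[
s_j,\ s_j-1,\ \ldots,\ s_{j-1}+1,\ \underbrace{1,\ 1,\ \ldots,\ 1}_{m_j \text{ copies}},
\]
where $m_j := n_j - r_j\, s_{j-1} - \binom{r_j+1}{2} \geq 0$ (when $r_j = 0$ this degenerates to $\llambda_{(n_j)} = 1_{n_j}$). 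This is a valid partition of $n_j$ because the listed column lengths are weakly decreasing, and they sum to $n_j$ by the choice of $m_j$. By construction, step $j$ contributes the fresh block $\{s_{j-1}+1, \ldots, s_j\}$ to the union of distinct column lengths, so the horizontal sum $\Hsum_j \llambda_{(n_j)}$ has at least $s_J$ distinct column lengths, and therefore at least $s_J$ distinct row lengths by fact (ii) above.

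It remains to show $s_J \geq \sqrt{2n} - 10J$. From the maximality of $r_j$ we get $(r_j + 1)s_{j-1} + \binom{r_j+2}{2} > n_j$, which (writing it as a sum of $r_j+1$ consecutive integers) is exactly $\binom{s_j+2}{2} - \binom{s_{j-1}+1}{2} > n_j$. Using the identity $\binom{s_j+2}{2} = \binom{s_j+1}{2} + (s_j+1)$ and telescoping over $j = 1, \ldots, J$ yields
\[
\binom{s_J+1}{2} > n - \sum_{j=1}^J (s_j + 1) \geq n - J(s_J + 1).
\]
This rearranges to the quadratic inequality $s_J^2 + (2J+1)s_J + 2J > 2n$, whose solution $s_J > \sqrt{2n} - J - \tfrac{1}{2}$ comfortably implies the stated bound.

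There is no real obstacle beyond algebra; the one conceptual step is choosing to describe $\llambda_{(n_j)}$ by its column-length multiset, since the horizontal-sum-as-multiset-union identity then makes both the construction (stack disjoint blocks $\{s_{j-1}+1, \ldots, s_j\}$) and the telescoping estimate transparent.
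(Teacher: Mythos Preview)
Your argument is correct and follows essentially the same greedy strategy as the paper: allocate consecutive integer column lengths to each $\llambda_{(n_j)}$ and use filler to reach the exact size $n_j$. The only cosmetic difference is that the paper uses a single extra column (of length at most $m$) as filler, whereas you use $m_j$ columns of length~$1$; your telescoping estimate is then a slightly more explicit version of the paper's inequality $\binom{m+1}{2}+Jm\geq n$.
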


\begin{proof}

Recalling that horizontal summation is equivalent to unioning column multisets, we form $\llambda_{(i)}\vdash n_i$ (to be horizontally summed fulfilling the lemma statement) as follows. Set $\llambda_{(1)}$ to have column lengths $1,2,\dots,a_1$ greedily until it has no more capacity, then assign the remaining column length so that $\llambda_{(1)}$ has size exactly $n_1$. Then proceed with $\llambda_{(2)}$ having columns $a_1+1,\dots,a_2$ with one final column, and similarly. At the end, suppose $\llambda:=\Hsum_{j=1}^J \llambda_{(j)}$ has $\Rows(\llambda)=m$. Then we see that $\binom{m+1}{2}+Jm\geq n$, because each of the $J$ diagrams $\llambda_{(n_j)}$ contains at most one extra column of length at most $m$. It follows that $(m+10J)^2\geq 2n$ which completes the proof. 
\end{proof}

\subsection{Case 1: $\dim(\llambda)\geq K^{n}$ for Large $K$}

Here we address the main case $\dim(\llambda)\geq K^{n}$ for $K$ a large constant. This is equivalent to $\dim(\llambda)\geq n^{\varepsilon n}$ for $\varepsilon\geq \frac{K}{\log(n)}$ (i.e. $n^{\varepsilon}$ is at least a large constant). We write the proof of this case in terms of $\varepsilon$; though not technically justified, it might be psychologically helpful to think of $\varepsilon$ as a small constant which does not go to $0$ with $n$.

Before beginning the main proof, we outline a special case. Suppose that $\llambda$ contains a macroscopic \emph{Durfee square} of side length $k=\Omega(\sqrt{n})$ (i.e. the $k$-th row of $\llambda$ has length at least $k$). Then we claim $\llambda^{\otimes t}$ covers $\Irrep(S_n)$ for $t\geq O(1)$. The reason is that we can write $\llambda=(\Rect(k,k)\Hplus \mmu)\Vplus \nnu$ and hence $\Rect(k,k)\Hplus 1_{n-k^2}\in \llambda^{\otimes 2}$. Lemma~\ref{lem:squarecube} then shows that some diagram with $\Omega(\sqrt{n})$ distinct row lengths is a subrepresentation of $\llambda^{\otimes 6}$. Finally we apply Corollary~\ref{cor:powersqualitative} to see that a small tensor power $\llambda^{O(1)}$ covers $\Irrep(S_n)$.

The full proof is a generalization of the above. We identify rectangles at different height scales and turn them into squares via Lemma~\ref{lem:rectsquare}. The hooklength formula allows us to relate the sizes of these rectangles to $\dim(\llambda)$ in general.

\begin{proof}[Proof of Theorem~\ref{thm:powersquantitative} when $\dim(\llambda)\geq K^{n}$ for Large $K$]

Write $\llambda=(a_1,a_2,\dots,a_A)$ and $\llambda'=(b_1,\dots,b_B)$. Let $H(s)$ be the $\llambda$-hooklength of a square $s\in\llambda$, defined as the number of squares directly below or directly to the right of $s$, including $s$. Let $H_r(s),H_c(s)$ be the lengths of the row and column parts of this hook, so that $H_r(s)+H_c(s)-1=H(s)$. Also for a square $s\in\llambda$ let $a(s)$ denote the length of the entire row containing $s$. Throughout we will use constant $\varepsilon_1,\varepsilon_2,\dots$ with each ratio $\frac{\varepsilon_j}{\varepsilon}$ bounded below by an absolute positive constant; each new subscript will correpond to roughly a constant factor decrease. We often omit floors and ceiling when irrelevant. Our first step is to use the hook-length formula to understand the geometry of $\llambda$. We have:
\[
n^{\varepsilon n}\leq \dim(\llambda)=\frac{n!}{\prod_{s\in\llambda}{H(s)}}\leq \frac{n^n}{\prod_{s\in\llambda}{H(s)}}=\prod_{s\in \llambda}\frac{n}{H(s)}.
\]
To control the hooklengths, we consider { the intersection $\llambda_0$ of the set of squares in $\llambda$ with a $2\sqrt{n}\times 2\sqrt{n}$ square diagram.} This leads to a decomposition 
\[
\llambda=(\llambda_R\Hplus \llambda_0)\Vplus \llambda_C=\llambda_R\Hplus (\llambda_0\Vplus \llambda_C)
\]
where $\llambda_R,\llambda_C$ respectfully consist of all columns and rows after the first $2\sqrt{n}$. {(These sets of rows and columns do not overlap because $(2\sqrt n)^2>n$.)} Observe that 
\[
n^{\varepsilon n}\leq \prod_{s\in \llambda}\frac{n}{H(s)}\leq \left(\prod_{s\in \llambda_R\Hplus\llambda_0}\frac{n}{H_r(s)}\right)\left(\prod_{s\in \llambda_C\Vplus\llambda_0}\frac{n}{H_c(s)}\right). 
\]
Assuming without loss of generality that the first product on the right side is larger, we obtain 
\begin{align*}
	n^{\varepsilon n/2}&\leq \prod_{s\in \llambda_R\Hplus\llambda_0}\frac{n}{H_r(s)} 
	\\
	\implies \varepsilon_1 n &\leq 
	\sum_{s\in\llambda_R\Hplus\llambda_0} \log_n\left(\frac{n}{H_r(s)}\right)
	\\
	&= \sum_{i=1}^{2\sqrt{n}}\log_n\left(\frac{n^{a_i}}{a_i!}\right).
\end{align*}
Using the fact $\left(\frac{a_i}{e}\right)^{a_i}\leq a_i!$ we see that 
\[
\log_n\left(\frac{n^{a_i}}{a_i!}\right) \leq a_i\log_n\left(\frac{n}{a_i}\right) + \frac{a_i}{\log(n)}. 
\]
Summing and recalling that $\varepsilon\cdot\log(n)$ is at least a large constant we obtain
\begin{align}
\nonumber
	\varepsilon_1n
	&\leq 
	\sum_{i=1}^{2\sqrt{n}} 
	\log_n\left(\frac{n^{a_i}}{a_i!}\right)
	\\
	&{\leq}
\nonumber
	\left(\sum_{i=1}^{2\sqrt n} a_i\log_n\left(\frac{n}{a_i}\right)\right)+\frac{n}{\log(n)}
	\\
	\implies 
\label{eq:HLF}
	\varepsilon_2n&\leq \sum_{i=1}^{2\sqrt{n}} a_i\log_n\left(\frac{n}{a_i}\right). 
\end{align}

Examining Equation~\eqref{eq:HLF}, we see that both large and small rows contribute a small amount to the sum. For large rows we have
\[
\sum_{i:a_i\geq n^{1-\varepsilon_3}} a_i \log_n\left(\frac{n}{a_i}\right) \leq \varepsilon_3 \sum_{i=1}^{2\sqrt n} a_i \leq \varepsilon_3 n.
\]
For small rows, note that the rows of length $a_i\leq n^{0.4}$ contribute in total at most $2n^{0.9}=o(\varepsilon n)$.

Choosing $\varepsilon_2,\varepsilon_3$ to ensure that $\varepsilon=O(\varepsilon_2-\varepsilon_3-o(\varepsilon))$, we obtain:
\begin{align}
	\varepsilon_4 n&\leq \sum_{i:n^{0.4}\leq a_i\leq n^{1-\varepsilon_3}} a_i\log_n\left(\frac{n}{a_i}\right)
	\\
	&=
	\sum_{s\in \llambda:n^{0.4}\leq a(s)\leq n^{1-\varepsilon_3}} \log_n\left(\frac{n}{a(s)}\right)\label{eq:HLF2}.
\end{align}

We partition the rows of $\llambda$ into scales according to the value $\log_n\left(\frac{n}{a_i}\right)$. Explicitly, we set $\alpha_j=\varepsilon_4\cdot 1.1^{j}$ for integers $0\leq j \leq J:=\left\lceil\log_{1.1}\left(\frac{0.6}{\varepsilon_4}\right)\right\rceil\leq O(\log\log n)$ and let $\mmu_{(j)}$ be the Young diagram consisting of all rows of $\llambda$ with lengths $a_i$ in the range $[n^{1-\alpha_{j+1}},n^{1-\alpha_j})$. 

The result is a decomposition \begin{equation}\label{eq:lambdascale}\llambda=\nnu\Vplus \Vsum_{j=0}^J\mmu_{(j)}.\end{equation} Here $\nnu$ consists of all the short and long rows not included in any $\mmu_{(j)}$, as well as the part $\llambda_C$ of $\llambda$ below the square $D$. This decomposition has the following properties:

\begin{enumerate}

\item All rows of $\mmu_{(j)}$ have length in the range $[n^{1-\alpha_{j+1}},n^{1-\alpha_j})$ for $\alpha_j= {\varepsilon_4\cdot 1.1^{j}}$.
\item From Equation~\eqref{eq:HLF2},
\[ 
	\sum_{j=0}^J \alpha_j |\mmu_{(j)}|\geq \varepsilon_5 n.
\]

\end{enumerate}

We now apply Lemma~\ref{lem:krectsinside} to each $\mmu_{(j)}$ using $k_j=16^{\lfloor \log_{16}(n^{\alpha_j/2})\rfloor}\asymp n^{\alpha_j/2} $. We get:	
\begin{equation}
\label{eq:sj}
	\mmu_{(j)}
	=
	\Vsum_{i}
	\left( \Rect(k_j h_{i,j},k_j)\Hplus \nnu_{i,j} \right).
\end{equation}
$\mmu_{(j)}$ has all row lengths at most $n^{1-\alpha_j}$, and most $2n^{1/2}$ rows. We use this to estimate the total size of the error partitions $\sum_{i}|\nnu_{i,j}|$: for each $j$,
\begin{align*}
	\sum_i |\nnu_{i,j}|&\leq k_j(n^{1-\alpha_j}+2n^{0.5})
	\\
	&\asymp n^{1-\frac{\alpha_j}{2}}+n^{\frac{1+\alpha_j}{2}}.
\end{align*}
Next we will apply the semigroup property to Equation~\eqref{eq:sj}. First, Lemma~\ref{lem:rectcube} gives 
\[
	c\left(\Rect(k_j h_{i,j},k_j),\Rect(k_j h_{i,j},k_j),\Rect(k_j \tilde h_{i,j},k_j)\Hplus 1_{k_j^2(h_{i,j}-\tilde h_{i,j})}\right)
\] 
for any $\tilde h_{i,j}\leq h_{i,j}$. We also recall that $c\big(\nnu_{i,j},\nnu_{i,j},1_{|\nnu_{i,j}|}\big)$ holds for each $i,j$. Applying the semigroup property with these Kronecker relations on Equation~\eqref{eq:sj} and setting $h_j:=\sum_i h_{i,j}$, we obtain for any $\tilde h_j\leq h_j$ and some appropriate value $r_j$:
\[
	c\left(\mmu_{(j)},\mmu_{(j)},\Rect(k_j\tilde h_j,k_j)\Hplus 1_{r_j}\right).
\]
Combining Equations~\eqref{eq:lambdascale} and \eqref{eq:sj} together with $c\big(\nnu,\nnu,1_{|\nnu|}\big)$ via the semigroup property implies, for $r:=|\nnu|+\sum_{j=0}^J r_j$, 
\begin{equation}
\label{eq:1}
	c\left(\llambda,\llambda,\left(\Hsum_{j=0}^J \Rect(k_j\tilde h_j,k_j)\right) \Hplus 1_{r}\right).
\end{equation}

We (for convenience as remarked before Lemma~\ref{lem:need16}) set $\tilde h_{j}\leq h_j$ to be the largest power of $16$ which is at most $h_j$. Recall the previous conclusion $\sum_{j=0}^J \alpha_j |\mmu_{(j)}| \geq \varepsilon_5 n$, the value $k_j\asymp n^{\alpha_j/2}$. Also note the simple estimates $k_j^2 \tilde h_j\geq \frac{|\mmu_{(j)}|-\sum_i |\nnu_{i,j}|}{100}$ and $\alpha_j\leq 0.8$ for all $j$. These together imply 
\begin{align*}
	\sum_j k_j^2\tilde h_j \log(k_j)
	&\geq 
	\Omega\left(\sum_j \alpha_j \log(n) |\mmu_{(j)}|\right) - O\left(\sum_{i,j} \alpha_j \log(n)|\nnu_{i,j}|.\right)
	\\
	&\geq 
	\Omega(\varepsilon_5 n\log(n)) 
	- 
	O\left(
	\log(n)
	\sum_{j=0}^J 
	\alpha_j\left(
	n^{1-\frac{\alpha_j}{2}}+n^{\frac{1+\alpha_j}{2}} 
	\right)
	\right).
\end{align*}
We now estimate the last term $\log(n)\sum_{j=0}^J \alpha_j\left(n^{1-\frac{\alpha_j}{2}}+n^{\frac{1+\alpha_j}{2}} \right)$. We will show that it is at most a small constant (depending on $K$) times $\varepsilon_4 n\log(n)$ so that we may ignore it in the last expression (as it is dominated by the other term). Since $\alpha_j\leq 0.8$ and $J\leq O(\log\log n)$ the contribution from all terms $n^{\frac{1+\alpha_j}{2}}$ is $O(n^{0.95})$. So we focus on upper bounding $\log(n)\sum_{j=0}^J \alpha_jn^{1-\frac{\alpha_j}{2}} = n\log(n)\sum_{j=0}^J \alpha_j n^{-\alpha_j/2}$. Recall we are in the case that $\varepsilon \log(n)$ is at least a large constant, or equivalently $n^{\varepsilon}$ at least a large constant. Hence we know that $\alpha_{j+1}-\alpha_j=\frac{\alpha_j}{10}\geq \frac{\varepsilon_4 }{10}$ is at least a large constant times $\frac{1}{\log(n)}$. This means that the sequence $(n^{-\alpha_j/2})_{j=0}^J$ is dominated by a geometrically decaying sequence with common ratio (say) $1/10$ and starting value $n^{-\alpha_0/2}=n^{-\varepsilon_4/2}$ which is at most a small constant. Because
{ $\frac{\alpha_{j+1}}{\alpha_j}=1.1$,} 
the sum $\sum_{j=0}^J \alpha_jn^{1-\frac{\alpha_j}{2}}$ is bounded above by a geometric series with common ratio $\frac{1.1}{10}\leq 1/5$, hence up to a constant factor by its first term $\varepsilon_4 n^{1-\frac{\varepsilon_4}{2}}$. Using one more time the assumption that $n^{\varepsilon}$ is at least a large constant we conclude that $\log(n)\sum_{j=0}^J \alpha_j\left(n^{1-\frac{\alpha_j}{2}}+n^{\frac{1+\alpha_j}{2}} \right)$ is at most a small constant times $\varepsilon_4 n\log(n)$. 

In summary we have established 
\[
\sum_{j=0}^J k_j^2\tilde h_j \log(k_j)\geq \varepsilon_6 n \log(n).
\]
As a result, we may choose non-negative integers $\{m_j:j\leq J\}$ such that $\sum_j m_j=r\leq n$ and 
\[
m_j\leq \frac{k_j^2 \tilde h_j\log(k_j)}{\varepsilon_7 \log(n)}.
\]
We then rewrite Equation~\eqref{eq:1} as
\begin{equation}
\label{eq:nearend}
	\Hsum_{j=0}^J
	\left(\Rect(k_j \tilde h_{j},k_j)\Hplus 1_{m_j}\right)
	\subseteq 
	\llambda^{\otimes 2}.
\end{equation}
Applying Lemma~\ref{lem:rectcover} with $n_j:=m_j+k_j^2\tilde h_j$ we see that for $t\geq t_j=O\left(\frac{\log(k_j\tilde h_j)n_j}{\log(k_j)k_j^2\tilde h_j}+1\right)$ the tensor power
\[
	\left(\Rect(k_j\tilde h_{j},k_j)\Hplus 1_{m_j}\right)^{\otimes t}
\] 
covers $\Irrep(S_{n_j})$. Since $n_j=O\left(\frac{k_j^2 \tilde h_j\log(k_j)}{\varepsilon_7 \log(n)}\right)$ we have $t_j=O\left(\frac{\log(k_j\tilde h_j)}{\varepsilon_7 \log(n)}+1\right)$. As $k_j\tilde h_j\leq n$ we conclude that $t_j=O(\varepsilon_7^{-1})$ for all $j$. By the semigroup property applied to Equation~\eqref{eq:nearend}, this means for any irreducible representations $\ggamma^{(j)}\vdash n_j$ we have 
\[
	\Hsum_{j=0}^J \ggamma^{(j)} \in \llambda^{\otimes O(\varepsilon_7^{-1})}.
\]
Lemma~\ref{lem:finish} implies that for appropriate choices of $\ggamma^{(j)}$, we have 
\[
	\Rows\left(\Hsum_{j=0}^J \ggamma^{(j)}\right)= (\sqrt{2}-o(1))\sqrt{n}.
\]
By Corollary~\ref{cor:powersqualitative} a further $O(1)$ tensor power $\llambda^{\otimes O(\varepsilon_7^{-1})}$ covers $\Irrep(S_n)$. As $\varepsilon_7=\Omega(\varepsilon)$ we conclude that $\llambda^{\otimes O(\varepsilon^{-1})}$ covers $\Irrep(S_n)$. This finishes the proof of Theorem~\ref{thm:powersquantitative} in the case that $\dim(\llambda)\geq K^n$ for large constant $K$.
\end{proof}

\subsection{Case $2$: $\dim(\llambda)\leq K^{n}$ and $\llambda$ Contains Multiple Large Rows/Columns}

Here we consider the case $\dim(\llambda)\leq K^{n}$. We first show how to break into two further cases.

\begin{lem}
If $\dim(\llambda)\leq K^{n}$, then for sufficiently large $n$ at least one row or column in $\llambda$ has size $\Omega(n)$.
\end{lem}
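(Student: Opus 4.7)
The plan is to prove the contrapositive via the hook length formula: if every row and every column of $\llambda$ has length smaller than $cn$ for a suitably small $c > 0$, then $\dim(\llambda) > K^n$ for all sufficiently large $n$, contradicting the hypothesis.

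The key observation is that every hook length $H(s)$ in $\llambda$ is bounded above by $a_1 + b_1 - 1$, where $a_1$ and $b_1$ are the lengths of the first row and first column. Indeed, for a cell $s=(i,j)$, the hook length equals $(a_i - j) + (b_j - i) + 1$, which is at most $(a_1 - 1) + (b_1 - 1) + 1$. Therefore, if we assume $a_1, b_1 \leq cn$, every hook length is at most $2cn$.

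Combining this with the hook length formula and Stirling's inequality $n! \geq (n/e)^n$ gives
\[
\dim(\llambda) = \frac{n!}{\prod_{s\in\llambda} H(s)} \geq \frac{(n/e)^n}{(2cn)^n} = \left(\frac{1}{2ec}\right)^n.
\]
Choosing $c$ small enough that $\frac{1}{2ec} > K$ (for instance $c = \frac{1}{4eK}$) contradicts the bound $\dim(\llambda) \leq K^n$ for large $n$. Hence at least one of $a_1, b_1$ must satisfy $a_1 \geq cn$ or $b_1 \geq cn$, proving that some row or column has size $\Omega(n)$.

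There is no real obstacle here; the argument is essentially a one-line consequence of the hook length formula once one notices the uniform bound $H(s) \leq a_1 + b_1 - 1$. The only subtlety is choosing the implicit constant in $\Omega(n)$ in terms of $K$, which the above calculation makes explicit as $c = \Theta(1/K)$.
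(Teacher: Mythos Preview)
Your proof is correct and uses essentially the same idea as the paper: both apply the hook length formula together with the observation that hook lengths are controlled by $a_1+b_1$. The paper phrases it directly (from $\dim(\llambda)\leq K^n$ deduce that the average of $\log(n/H(s))$ is $O(1)$, so some $H(s)=\Omega(n)$), while you argue the contrapositive with a uniform bound $H(s)\leq a_1+b_1-1$; these are two sides of the same coin, and your version even makes the dependence of the implicit constant on $K$ explicit.
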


\begin{proof}
The hooklength formula gives 
\[
	\prod_{s\in\llambda} H(s)\geq \frac{n!}{K^n} = \Omega(n)^n.
\]
Taking logarithms on each side, we find
\begin{align*}
	\sum_{s\in\llambda} \log(H(s)) 
	&\geq 
	n\log(n)-O(n),
	\\
	\implies \frac{1}{n}
	\sum_{s\in\llambda} 
	\log\left(\frac{n}{H(s)}\right) 
	&\leq 
	O(1).
\end{align*}
Hence some hooklength has size $\Omega(n)$, which is equivalent to the desired claim.
\end{proof}

In this subsection we will prove Theorem~\ref{thm:powersquantitative} when $\dim(\llambda)\leq K^{n}$ and there are at least $2$ linear-size rows or columns. We separate this case into two subcases: $2$ long rows, or $1$ long row and $1$ long column.  (Note that the case of $2$ long columns is identical to $2$ long rows by conjugation.) In both situations we simply prove that $\llambda^{\otimes O(\log n)}$ covers $\Irrep(S_n)$ which suffices given the upper bound on $\dim(\llambda)$ (and shows $\dim(\llambda)$ is at least exponential in $n$ in Case $2$). We leave the final case of a single linear-size row to the next subsection.

\begin{lem}
\label{lem:2row}

If $\llambda=(a_1,a_2,\dots)$ for $a_1\geq a_2\geq \Omega(n)$ then $\llambda^{\otimes O(\log \, n)}$ covers $\Irrep(S_n)$.

\end{lem}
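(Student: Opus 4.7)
The plan is to find, inside $\llambda^{\otimes O(\log n)}$, a partition with $\Omega(\sqrt{n})$ distinct row lengths; then Corollary~\ref{cor:powersqualitative} with $r=\Omega(\sqrt{n})$ finishes in $O(1)$ extra tensor factors. To produce such a partition I will extract a $2\times a_2$ rectangle sitting inside $\llambda$, iteratively square its height to obtain a near-$\sqrt{n}\times\sqrt{n}$ square (horizontally summed with a trivial strip), and then apply Lemma~\ref{lem:squarecube} to harvest many distinct row lengths.

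For the embedding step, I set $\nnu=(a_3,\ldots,a_k)$ so that $\llambda = (Rect(a_2,2)\Vplus \nnu)\Hplus 1_{a_1-a_2}$. Combining the relations $c(Rect(a_2,2),\ldots,Rect(a_2,2),\theta)$ (one for each $\theta\in Rect(a_2,2)^{\otimes m}$) and the always-valid $c(\nnu,\ldots,\nnu,1_{|\nnu|})$ via Corollary~\ref{cor:vsum}, with all $m$ ``rectangle'' positions summed vertically (which requires $m$ even) and the final position summed horizontally, I obtain $c(\tilde\llambda,\ldots,\tilde\llambda,\theta\Hplus 1_{|\nnu|})$ for $\tilde\llambda=Rect(a_2,2)\Vplus\nnu$. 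A final horizontal sum with the trivial relation on $1_{a_1-a_2}$ yields, for every even $m$ and every $\theta\in Rect(a_2,2)^{\otimes m}$, the containment $\theta\Hplus 1_{n-2a_2}\in \llambda^{\otimes m}$.

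Next I build a large square inside $Rect(a_2,2)^{\otimes O(\log n)}$. Using the second assertion of Lemma~\ref{lem:rectcube} to round $a_2$ down to a nearby power of $2$ (absorbing the slack into a trivial strip), and applying Lemma~\ref{lem:rectsquare} with $z=1$ twice to raise the height from $2$ to $16$, I reach a rectangle of the form $Rect(2^{4b},2^4)$ inside $Rect(a_2,2)^{\otimes 8}$ with $b=\Theta(\log n)$. Lemma~\ref{lem:need16} applied with $a=1$ then yields a square $Rect(2^{2+2b},2^{2+2b})$ of side $s=\Omega(\sqrt{a_2})=\Omega(\sqrt{n})$ inside $Rect(2^{4b},2^4)^{\otimes O(\log n)}$, hence (after absorbing bookkeeping strips) a shape $Rect(s,s)\Hplus 1_{2a_2-s^2}$ sits inside $Rect(a_2,2)^{\otimes O(\log n)}$. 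Combined with the embedding step this produces $Rect(s,s)\Hplus 1_{n-s^2}\in \llambda^{\otimes O(\log n)}$.

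To finish, I apply Lemma~\ref{lem:squarecube} to $Rect(s,s)$ (with $s$ rounded down to be even) to produce a $\theta'\in Rect(s,s)^{\otimes 3}$ with $\Rows(\theta')=s=\Omega(\sqrt{n})$. By the semigroup property $\theta'\Hplus 1_{n-s^2}\in \llambda^{\otimes O(\log n)}$, and horizontally summing with the trivial strip $1_{n-s^2}$ cannot decrease the number of distinct row lengths. Corollary~\ref{cor:powersqualitative} with $r=\Omega(\sqrt{n})$ then gives that an $O(1)$ tensor power of $\theta'\Hplus 1_{n-s^2}$ covers $\Irrep(S_n)$, so $\llambda^{\otimes O(\log n)}$ covers as desired. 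The main obstacle I anticipate is the divisibility bookkeeping---rounding $a_2$ to powers of $2$ or $16$ and ensuring $s$ is even---but each such rounding loses only a constant factor and can be absorbed into trivial strips via the second part of Lemma~\ref{lem:rectcube} without affecting the final $O(\log n)$ exponent.
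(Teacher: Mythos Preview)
Your proposal is correct and follows essentially the same route as the paper: extract a $2\times\Omega(n)$ rectangle from the first two rows, use Lemma~\ref{lem:rectsquare} (twice) and Lemma~\ref{lem:need16} to reach a square of side $\Omega(\sqrt{n})$ plus a trivial strip inside $\llambda^{\otimes O(\log n)}$, then obtain $\Omega(\sqrt{n})$ distinct row lengths and finish with Corollary~\ref{cor:powersqualitative}. The only differences are cosmetic---the paper rounds $a_2$ to $8\cdot 16^m$ in the initial decomposition rather than afterward, and it invokes coverage of $\Irrep(S_{16^{m+1}})$ (effectively Lemma~\ref{lem:rectcover}) to produce a diagram with many distinct rows, whereas you explicitly go through Lemma~\ref{lem:squarecube}; your handling of the even-$m$ parity constraint via Corollary~\ref{cor:vsum} is more explicit than the paper's terse treatment.
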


\begin{proof}

We may write 
\[
	\llambda = \big(\Rect(8\cdot 16^m,2)\Hplus \mmu\big)\Vplus \nnu
\]
for $16^m=\Omega(n)$.
(It is easy to see that whenever a Young diagram contains a rectangle we have such an equation.) By Lemma~\ref{lem:rectsquare}, $\llambda^{\otimes 4}$ contains $\Rect(16^m,16)\Hplus 1_{|\mmu|+|\nnu|}$. Directly applying {Lemma~\ref{lem:rectcover}} shows that $\Rect(16^m,16)^{\otimes {O(\log \, n)}}$ covers $\Irrep(S_{16^{m+1}})$ and in particular contains a Young diagram $\ggamma$ with $\Omega(\sqrt{n})$ distinct rows. Hence $\llambda^{\otimes O(\log \, n)}$ contains $\ggamma\Hplus 1_{|\mmu|+|\nnu|}$ which also has $\Omega({\sqrt{n}})$ distinct rows. Applying Corollary~\ref{cor:powersqualitative} shows that a further $O(1)$ tensor power suffices to cover $\Irrep(S_n)$. 
\end{proof}

The case of one long row and one long column is similar via the following Kronecker relation for hook shapes. We remark that tensor products of two hook shapes are in fact understood completely, see \cite{rosas2001kronecker}. 
 
\begin{lem}
\label{lem:hooksquare}

If $x, y\geq m$ then $c\big(\Hook(x,y),\Hook(x,y), {\Rect(2,m-1)}\Hplus 1_{x+y-2m+1}\big)$.

\end{lem}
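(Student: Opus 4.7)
The plan is to apply the semigroup property (Theorem~\ref{thm:Hsum} and Corollary~\ref{cor:vsum}) to peel off the outer trivial and alternating strips of $Hook(x,y)$ and reduce to a Kronecker relation for the smaller self-conjugate core hook $Hook(m,m)$, which can then be verified directly.

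Concretely, I will use the decomposition $Hook(x,y) = 1_{x-m} \Hplus \bigl(Hook(m,m) \Vplus 1^{y-m}\bigr)$, valid since $x,y \geq m$. Combining the trivial Kronecker relations $c(1_{x-m}, 1_{x-m}, 1_{x-m})$ and $c(1^{y-m}, 1^{y-m}, 1_{y-m})$ with a sub-claim $c(Hook(m,m), Hook(m,m), \theta_m)$ for a specific $\theta_m \vdash 2m-1$, the semigroup property produces $c(Hook(x,y), Hook(x,y), 1_{x-m} \Hplus \theta_m \Hplus 1_{y-m})$. Since horizontally summing with the two trivial pieces only extends the first row of $\theta_m$ by $x+y-2m$, a short size check pins down $\theta_m = (3, 2^{m-2})$, so that $1_{x-m} \Hplus \theta_m \Hplus 1_{y-m} = (x+y-2m+3, 2^{m-2})$, which is the target shape.

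The remaining sub-claim $(3, 2^{m-2}) \in Hook(m,m)^{\otimes 2}$ is a finite combinatorial statement about a Kronecker square of two hooks. Lemma~\ref{lem:tensorcube} only yields $Hook(m,m) \in Hook(m,m)^{\otimes 2}$, which is the wrong shape. The cleanest route is to invoke Rosas' 2001 classification of Kronecker products of two hook shapes~\cite{rosas2001kronecker}, which describes all positive constituents of $Hook(a,b) \otimes Hook(a',b')$ via simple Durfee-size and arm/leg inequalities; the specific $(3, 2^{m-2})$ has Durfee size $2$ and is easily checked to satisfy these inequalities for every $m \geq 2$. I expect this sub-claim to be the main obstacle, since a naive induction on $m$ using the decomposition $Hook(m,m) = \bigl(Hook(m-1,m-1) \Hplus (1)\bigr) \Vplus (1)$ extends $\theta_{m-1} = (3, 2^{m-3})$ to $(5, 2^{m-3})$ rather than to the needed $(3, 2^{m-2})$, so one must either appeal to Rosas' formula directly or find a more delicate inductive decomposition.
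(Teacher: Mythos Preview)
Your outline is correct and follows the same semigroup strategy as the paper: peel off trivial and sign strips from $Hook(x,y)$ to reduce to a Kronecker relation for a small core hook, then extend back. The only real difference is the choice of core. You take the self-conjugate $Hook(m,m)$ and are then forced into the sub-claim $(3,2^{m-2})\in Hook(m,m)^{\otimes 2}$, which you resolve by citing Rosas. The paper instead takes the even-sized core $Hook(m,m-1)$, which admits the decomposition
\[
Hook(m,m-1)=1^{m-1}\Hplus 1_{m-1}=1_{m-1}\Hplus 1^{m-1}.
\]
Horizontally summing the two trivial relations $c(1^{m-1},1_{m-1},1^{m-1})$ and $c(1_{m-1},1^{m-1},1^{m-1})$ immediately gives
\[
c\bigl(Hook(m,m-1),\,Hook(m,m-1),\,(2^{m-1})\bigr),
\]
so the core relation comes for free from the semigroup property alone. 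From there one horizontally adds $1_{x-m}$ everywhere, then adds $1^{y-m+1}$ vertically in the first two slots and horizontally in the third, exactly as in your extension step.

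Your sub-claim is in fact a one-line consequence of the paper's core relation: combining $c\bigl(Hook(m,m-1),Hook(m,m-1),(2^{m-1})\bigr)$ with $c((1),(1),(1))$ via Corollary~\ref{cor:vsum} (vertical in the first two slots, horizontal in the third) yields precisely $c\bigl(Hook(m,m),Hook(m,m),(3,2^{m-2})\bigr)$. So Rosas is not needed; shifting your core down by a single box makes the entire argument elementary and self-contained, and removes what you correctly identified as the main obstacle.
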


\begin{proof}

We have $c\big(1^{m-1},1_{m-1},1^{m-1}\big)$ and $c\big(1_{m-1},1^{m-1},1^{m-1}\big)$. {(Recall that $1^{\ell}$ denotes the alternating representation of $S_{\ell}$.)} Horizontally summing yields
\[
c\left(\Hook(m,m-1),\Hook(m,m-1),{\Rect(2,m-1)}\right).
\]
Summing horizontally with $c\big(1_{x-m},1_{x-m},1_{x-m}\big)$, and then vertically in the first two arguments (and horizontally in the third) with $c\big(1^{y-m+1},1^{y-m+1},1_{y-m+1}\big)$ gives the conclusion.
\end{proof}

\begin{lem}

If $\llambda=(a_1,a_2,\dots)$ and $\llambda'=(b_1,b_2,\dots)$ for $a_1,b_1=\Omega(n)$ then then $\llambda^{\otimes O(\log \, n)}$ covers $\Irrep(S_n)$.

\end{lem}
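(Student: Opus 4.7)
My plan is to parallel the proof of Lemma~\ref{lem:2row}, using the big hook inside $\llambda$ in place of two long rows and invoking Lemma~\ref{lem:hooksquare} in place of the wide-rectangle step. First, if $a_2 = \Omega(n)$ or $b_2 = \Omega(n)$ then Lemma~\ref{lem:2row} applied to $\llambda$ or to $\llambda'$ already gives the bound (an even tensor power of $\llambda$ agrees with the same power of $\llambda'$ since $(1^n)^{\otimes 2} = 1_n$), so I assume $a_2, b_2 = o(n)$. Under this assumption I use the horizontal decomposition
\[\llambda = Hook(x, y) \Hplus \rho, \qquad x := a_1 - a_2 + 1,\quad y := b_1,\quad \rho := (a_2-1,\,a_2-1,\,a_3-1,\,\ldots,\,a_{b_1}-1),\]
and verify that $\rho$ is a valid partition (since $\rho_1 = \rho_2 = a_2-1$ and subsequent entries inherit $\llambda$'s weak decrease) and that the horizontal sum reproduces $\llambda$. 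Since $x, y = \Omega(n)$, I fix the largest integer $M$ with $m := 8 \cdot 16^M + 1 \leq \min(x, y)$, so $16^M = \Theta(n)$.

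Next I apply Lemma~\ref{lem:hooksquare} together with the trivial relation $c(\rho, \rho, 1_{|\rho|})$ and the horizontal semigroup property to conclude
\[Rect(2, m-1) \Hplus 1_{n - 2(m-1)} \in \llambda \otimes \llambda.\]
This is the analog of extracting $Rect(8 \cdot 16^m, 2)$ in Lemma~\ref{lem:2row}, except the rectangle here is tall rather than wide. I fatten it by applying the conjugate of Lemma~\ref{lem:rectsquare} twice (with $(x,z) = (2,1)$, then $(4,1)$) to obtain $Rect(16, 16^M) \Hplus 1_{n - 2(m-1)} \in \llambda^{\otimes 8}$. Since the target square is self-conjugate, Lemma~\ref{lem:need16} applies to $Rect(16, 16^M) = Rect(2^4, 2^{4M})$ and produces the square $Rect(2^{2+2M}, 2^{2+2M})$ of side $\Omega(\sqrt n)$ inside the $O(\log n)$-th tensor power, so $\llambda^{\otimes O(\log n)}$ contains this square horizontally summed with $1_{n - 2(m-1)}$.

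Finally, exactly as in Lemma~\ref{lem:2row}, Lemma~\ref{lem:squarecube} turns the square (after one further tensor cube) into a Young diagram $\ggamma$ with $\Omega(\sqrt n)$ distinct rows sitting inside $\llambda^{\otimes O(\log n)}$; the $1_{n - 2(m-1)}$ horizontal summand preserves the distinct-row count. Corollary~\ref{cor:powersqualitative} then shows that a further $O(1)$ tensor power covers $\Irrep(S_n)$, yielding $\llambda^{\otimes O(\log n)}$ coverage as claimed. The main obstacle is the tall-versus-wide orientation: Lemma~\ref{lem:hooksquare} outputs a tall rectangle, while Lemma~\ref{lem:need16} is stated for wide ones. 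This is resolved by the two-step conjugate-Lemma~\ref{lem:rectsquare} fattening, together with the fact that a self-conjugate square appears in $\llambda^{\otimes 2t}$ if and only if it appears in $(\llambda')^{\otimes 2t}$.
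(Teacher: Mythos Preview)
Your approach is essentially the same as the paper's: reduce via Lemma~\ref{lem:2row} to the case $a_2,b_2=o(n)$, peel off a large hook from $\llambda$, apply Lemma~\ref{lem:hooksquare} together with the semigroup property to obtain $Rect(2,\Omega(n))\Hplus 1_r$ inside $\llambda^{\otimes 2}$, and then finish ``by the same argument as Lemma~\ref{lem:2row}''. The only structural difference is that the paper uses the vertical decomposition $\llambda=Hook(a_1,b_1-b_2)\Vplus\mmu$, whereas you use the horizontal decomposition $\llambda=Hook(a_1-a_2+1,b_1)\Hplus\rho$; both are valid and lead to the same place.

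Two cosmetic slips are worth fixing. First, Lemma~\ref{lem:hooksquare} with parameter $m$ outputs $Rect(2,2m-2)$, not $Rect(2,m-1)$; this factor of $2$ is harmless at the $\Theta(n)$ scale but your arithmetic should be adjusted. Second, your rectangle orientations are inverted in the fattening step: conjugating the first two arguments of Lemma~\ref{lem:rectsquare} with $(x,z)=(2,1)$ sends $Rect(2,2y)^{\otimes 2}$ to the \emph{wide} rectangle $Rect(y,4)$, and a second \emph{unconjugated} application of Lemma~\ref{lem:rectsquare} then yields the wide $Rect(16^M,16)=Rect(2^{4M},2^4)$. Lemma~\ref{lem:need16} applies to this shape with $(a,b)=(1,M)$, whereas for the tall $Rect(16,16^M)$ you wrote the hypothesis $b\geq a$ would fail. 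With these fixes the argument goes through.
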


\begin{proof}

Let $c$ be a small constant so that $a_1,b_1\geq cn$. Assuming we cannot apply Lemma~\ref{lem:2row}, we have (say) $a_2,b_2\leq cn/2$. Then it is easy to see that we may write $\llambda=\Hook(a_1,b_1-b_2)\Vplus\mmu$ for an appropriate $\mmu$. Lemma~\ref{lem:hooksquare} then implies that $\llambda^{\otimes 2}$ contains $\Rect(2,\Omega(n))\Hplus 1_r$ for appropriate $r$. By the same argument as Lemma~\ref{lem:2row} a constant tensor power of this covers $\Irrep(S_n)$, finishing the proof.
\end{proof}

\subsection{Case $3$: $\dim(\llambda)\leq K^{n}$ and $\llambda$ contains $1$ Large Row}

As usual we take $\llambda=(a_1,\dots)$ and $\llambda'=(b_1,\dots)$. Here we assume $a_1\geq cn$ and all other rows and columns have length at most $c'n$ for $c'\leq c/2$. Let $M=\max(a_2,b_1)$ and $m=a_1-M$. Write $\llambda=\hat\llambda\Hplus 1_m$. Then $\hat\llambda$ has first row of length $M=\max(a_2,b_1)$, so $|\hat\llambda|\geq {2M-1}$. The idea will be to apply a previously established case to $\hat\llambda$. Indeed, since $\hat\llambda$ has a tie for its two largest row and column lengths, we by definition cannot be in the case of a single large row. (Note that we may have $|\hat\llambda|=O(1)$, but here Proposition~\ref{prop:smallpowers} acts as a base case.) Writing 
\[
	k=n-m=|\hat\llambda|,
\]
we know $\hat\llambda^{\otimes t}$ covers $\Irrep(S_{k})$ for some $t=O\left(\frac{\log(k!)}{\log\dim(\hat\llambda)}\right)$. We next relate $\dim(\llambda)$ to $\dim(\hat\llambda)$.

\begin{figure}[h]

\begin{framed}
\[
\ydiagram[*(red)]{4,2,2,1}*[*(blue)]{4+14,0,0,0}=\ydiagram[*(red)]{4,2,2,1}\Hplus \ydiagram[*(blue)]{14,0,0,0} 
\]

\caption{In Case $3$ we break a Young diagram $\llambda$ with $1$ long row into $\hat\llambda$ (shown in red) plus a long horizontal strip (shown in blue). We ensure that $\hat\llambda$ has longest row equal to either its second longest row or longest column, so that some already-proved case of Theorem~\ref{thm:powersquantitative} applies to $\hat\llambda$. We have $|\hat\llambda|=k\geq 2M-1$, and the size of the long horizontal strip is $m=n-k$.}
\end{framed}
\end{figure}

\begin{lem}
\label{lem:SYTcombo}
With $(\llambda,\hat\llambda,n,M,k)$ as above,
\[
	\max\left(\dim(\hat\llambda),\binom{n-M}{k-M}\right)\leq \dim(\llambda)\leq \dim(\hat\llambda)\cdot\binom{n}{k}
\]
\end{lem}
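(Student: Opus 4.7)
The plan is to prove the three inequalities separately, each via classical tools for standard Young tableaux (SYT).

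For the upper bound $\dim(\llambda)\leq\dim(\hat\llambda)\binom{n}{k}$, observe that $\llambda/\hat\llambda$ is a horizontal strip of size $m$ (namely the $m$ cells appended to the right of the first row). The Pieri rule then implies that $\llambda$ occurs as a summand of the induced representation $\mathrm{Ind}_{S_k\times S_m}^{S_n}(\hat\llambda\boxtimes 1_m)$. Comparing dimensions, and noting this induced representation has dimension $\binom{n}{k}\dim(\hat\llambda)\cdot 1$, yields the bound. Alternatively one may give a direct injection from $\mathrm{SYT}(\llambda)$ to pairs $(S,T')$ with $S\in\binom{[n]}{k}$ and $T'\in\mathrm{SYT}(\hat\llambda)$, by letting $S$ be the set of entries in the $\hat\llambda$ subregion of a SYT of $\llambda$ and $T'$ the restriction relabeled by rank.

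For the first lower bound $\dim(\hat\llambda)\leq\dim(\llambda)$, iterate the branching rule. Form a chain $\llambda=\llambda^{(0)}\supset\llambda^{(1)}\supset\cdots\supset\llambda^{(m)}=\hat\llambda$ by repeatedly removing the rightmost cell of the first row. Since $M\geq a_2$, the first row of $\llambda^{(i)}$ has length $a_1-i\geq M+1>a_2$ for all $0\leq i\leq m-1$, so the removed cell is genuinely a corner at each step. Branching then gives $\dim(\llambda^{(i)})\geq\dim(\llambda^{(i+1)})$ for each $i$, and chaining yields $\dim(\llambda)\geq\dim(\hat\llambda)$.

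For the second lower bound $\binom{n-M}{k-M}\leq\dim(\llambda)$, I will exhibit $\binom{n-M}{k-M}$ distinct SYTs of $\llambda$ explicitly. Let $\llambda^{*}:=(a_2,a_3,\ldots)$ denote the shape below the first row of $\llambda$, and fix any SYT $T^{*}$ of $\llambda^{*}$ (treating the empty case as having a unique empty SYT). For each subset $V\subseteq\{M+1,\ldots,n\}$ with $|V|=k-M$, build a filling $T_V$ of $\llambda$ by (i) placing $1,2,\ldots,M$ in the cells $(1,1),\ldots,(1,M)$; (ii) filling $\llambda^{*}$ by applying to $T^{*}$ the order-preserving bijection $\{1,\ldots,k-M\}\to V$; and (iii) placing the $m$ elements of $\{M+1,\ldots,n\}\setminus V$ in increasing order in the strip cells $(1,M+1),\ldots,(1,M+m)$. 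Row-increase is clear, and column-increase holds because in each column $j\leq M$ the top cell has value $j\leq M$ while cells below have values drawn from $V\subseteq\{M+1,\ldots,n\}$, and each column $j>M$ contains only the single strip cell (since $\hat\llambda$ has first row of exact length $M$, no cell $(2,j)$ with $j>M$ exists in $\llambda$). Distinct $V$ produce distinct $T_V$, giving the bound.

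No step is technically deep; the small pitfalls are verifying the corner-removal condition in the branching argument, which relies on $M\geq a_2$, and verifying column-increase in the explicit construction, which relies on $\hat\llambda$ having first row of exact length $M$ so that the strip cells have no cells below them in $\llambda$.
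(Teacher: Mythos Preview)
Your proof is correct and follows essentially the same approach as the paper's. Both arguments use the SYT interpretation of dimension: for the upper bound, the paper gives exactly your alternative injection from SYT$(\llambda)$ into pairs $(S,T')$; for $\dim(\hat\llambda)\leq\dim(\llambda)$, the paper simply notes that any SYT of $\hat\llambda$ extends to one of $\llambda$, which is your branching argument in compressed form; and for $\binom{n-M}{k-M}\leq\dim(\llambda)$, the paper builds the same explicit family of tableaux (placing $1,\ldots,M$ first, then choosing $k-M$ values for the remaining cells of $\hat\llambda$). Your Pieri/induction phrasing of the upper bound is a pleasant alternative but amounts to the same injection.
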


\begin{proof}

We use the interpretation of dimension as counting standard Young tableaux (henceforth SYT) of a given shape. This makes it clear that $\dim(\hat\llambda)\leq \dim(\llambda)$, since any SYT of shape $\hat\llambda$ extends to a SYT of shape $\llambda$. To see that $\binom{n-M}{k-M}\leq\dim(\llambda)$ we explicitly construct $\binom{n-M}{k-M}$ SYTs of shape $\llambda$ at follows. Fill the leftmost $M$ elements of the top row with $1,2,\dots,M$. Then pick $k-M$ of the remaining $n-M$ numbers in $[n]$ to complete some SYT of shape $\hat\llambda$ inside $\llambda$ and use the remaining $n-k$ numbers to fill the top row of $\llambda$. Since $a_2\leq M$ this is a valid SYT.

To show the upper bound $\dim(\llambda)\leq \dim(\hat\llambda)\cdot\binom{n}{k}$, we argue similarly. The point is that each choice of which $k$ numbers in $[n]$ are used to label $\hat\llambda\subseteq\llambda$, combined with a choice of SYT on $\hat\llambda$ to determine their relative order, determines at most $1$ SYT of shape $\llambda$ as the remaining $m$ squares of the first row must be in sorted order.
\end{proof}

\begin{cor}
\label{cor:dimllambda}
We have
\[
\log(\dim(\llambda)) \asymp \max\left(\log(\dim(\hat\llambda)),k\log(n/k)\right).
\]
\end{cor}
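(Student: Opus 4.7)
The corollary is essentially a clean logarithmic restatement of Lemma~\ref{lem:SYTcombo}. My plan is to take logarithms of the sandwich
\[\max\left(\dim(\hat\llambda),\binom{n-M}{k-M}\right)\leq\dim(\llambda)\leq\dim(\hat\llambda)\cdot\binom{n}{k}\]
and show that both $\log\binom{n}{k}$ and $\log\binom{n-M}{k-M}$ are $\Theta(k\log(n/k))$. Once that is done, the upper bound reads $\log\dim(\hat\llambda)+O(k\log(n/k))$, which is within a constant factor of the max of the two terms, while the lower bound is already phrased as a max.

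For the upper bound I use the standard estimate $\binom{n}{k}\leq(en/k)^k$, giving $\log\binom{n}{k}\leq k\log(n/k)+k$. To absorb the trailing $+k$ into $k\log(n/k)$ I need $\log(n/k)=\Omega(1)$, and this is exactly what the Case~3 hypotheses supply: since $a_1\geq cn$ while all other rows and columns have length at most $c'n\leq (c/2)n$, one has $m\geq cn/2$ and so $k=n-m\leq(1-c/2)n$, giving $n/k\geq 1/(1-c/2)$.

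For the lower bound I apply the standard inequality $\binom{N}{K}\geq(N/K)^K$ with $N=n-M$ and $K=k-M$. The small trick is the elementary observation $(n-M)/(k-M)\geq n/k$, which rearranges to $nM\geq kM$ and so follows from $n\geq k$; combined with $k-M\geq k/2$ (from $|\hat\llambda|=k\geq 2M$, as noted just before Lemma~\ref{lem:SYTcombo}), this yields $\binom{n-M}{k-M}\geq(n/k)^{k/2}$, hence $\log\binom{n-M}{k-M}\geq(k/2)\log(n/k)$. There is no real obstacle; the whole argument is a short accounting exercise, and the only step that requires a moment's thought is the inequality $(n-M)/(k-M)\geq n/k$, which is what lets me express the lower sandwich bound directly in terms of $n/k$ rather than a more awkward ratio like $m/(k-M)$.
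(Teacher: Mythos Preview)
Your proof is correct and follows essentially the same route as the paper: both take logarithms of the sandwich from Lemma~\ref{lem:SYTcombo}, use the standard estimates $(a/b)^b\le\binom{a}{b}\le(ea/b)^b$, and reduce to showing $\log\binom{n-M}{k-M}\asymp\log\binom{n}{k}\asymp k\log(n/k)$ via the two observations $(n-M)/(k-M)\ge n/k$ and $k-M\asymp k$, together with $\log(n/k)=\Omega(1)$ from the Case~3 hypothesis $k\le(1-c/2)n$. You spell out the absorption of the $+k$ term and the rearrangement $(n-M)/(k-M)\ge n/k\Leftrightarrow n\ge k$ a bit more explicitly than the paper does, but the arguments are the same.
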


\begin{proof}

Recall the simple estimate $\left(\frac{a}{b}\right)^b\leq \binom{a}{b}\leq \left(\frac{ae}{b}\right)^b$, which implies
\[
b\log(a/b)\leq \log\binom{a}{b}\leq b\left(\log(a/b)+1\right).
\]
From before we have the inequalities $c'n\geq M$ for a small absolute constant $c'$, and $(1-c')n\geq k\geq {2M-1}$. Then it is easy to see that $\binom{n-M}{k-M}\asymp \binom{n}{k}\asymp k\log\left(\frac{n}{k}\right)$. Indeed, we have 
\[
\log\frac{n-M}{k-M}\geq \log\frac{n}{k}\geq \Omega(1)
\]
and also
\[
k-M\asymp k.
\]
Combined with Lemma~\ref{lem:SYTcombo} this implies the claim.
\end{proof}

In light of the above corollary, it suffices to show that $\llambda^{\otimes t}$ covers $\Irrep(S_n)$ for 
{
\[
	t
	\geq 
	O\left(
	\frac{n\log n}{\log\dim(\llambda)}
	\right)
	= 
	O\left(
	\min\left(
	\frac{n\log n}{\log\dim(\hat\llambda)},\frac{n\log n}{k\log(n/k)}
	\right)
	\right).
\]
}
We will show separately that either of the numbers on the right hand side suffices, beginning with the first. As mentioned before, one of the previous cases of Theorem~\ref{thm:powersquantitative} applies to $\hat\llambda$ because clearly the present case 3 does not.  Therefore we have that $\hat\llambda^{\otimes t}$ covers $\Irrep(S_k)$ for $t=O\left(\frac{k \log k}{\log\dim(\hat\llambda)}\right)$. This implies $\ttau_n^{\otimes k}\subseteq \llambda^{\otimes t}$ for the same range of $t$, and hence taking $O(n/k)$-th tensor powers we see that $\llambda^{\otimes O(nt/k)}$ covers $\Irrep(S_n)$ in the same range of $t$. Since $k\leq n$ we have $\frac{n}{k}\cdot \frac{k \log k}{\log\dim(\hat\llambda)}\leq \frac{n \log n}{\log\dim(\hat\llambda)}$, so we conclude that $\llambda^{\otimes t}$ covers $\Irrep(S_n)$ for $t\geq O\left(\frac{n\log n}{\log\dim(\hat\llambda)}\right)$.

It remains to show that $\llambda^{\otimes t}$ covers $\Irrep(S_n)$ for $t\geq O\left(\frac{n\log n}{k\log(n/k)}\right)$. To do this we will essentially reduce to the cases that $\llambda$ is a hook or contains two rows. However the proofs in these cases are slightly more involved than those of the previous subsection (since the first row can have length $n-o(n)$) and use a version of the Pieri rule. We first note again that for $k=O(1)$ of constant size, the result is clear. In this case, $\hat\llambda^{\otimes O(1)}$ covers $\Irrep(S_k)$ which implies as before that $\llambda^{\otimes O(n/k)}$ covers $\Irrep(S_n)$. And $n/k\asymp \frac{n\log(n)}{k\log(n/k)}$ when $n$ has superconstant size (which we already assumed) and $k$ has constant size. Therefore we may assume that $k$ is sufficiently large.

\begin{lem}
\label{lem:hookidempotent}
For any $a,b$ we have the Kronecker relation 
\[
{	
c\big(\Hook(a,b),\Hook(a,b),\Hook\big(\max(a,b),\min(a,b)\big)\big).
}
\]

\end{lem}

\begin{proof}
By conjugating we may assume $a\geq b$. {Lemma~\ref{lem:tensorcube} implies $c\big(\Hook(b,b),\Hook(b,b),\Hook(b,b)\big)$} since $\Hook(b,b)$ is symmetric. Horizontal summation with $c\big(1_{a-b},1_{a-b},1_{a-b}\big)$ completes the proof.
\end{proof}

\begin{lem}
\label{lem:hookortworow}
For any $\llambda$ as in this subsection, $\llambda^{\otimes 2}$ contains at least one of the following:
\begin{enumerate}
	\item The two row partition $(n-\ell,\ell)$ for some $\ell=\Omega(k)$.
	\item The hook partition $\Hook(n-\ell+1,\ell)$ for some $\ell=\Omega(k)$.
\end{enumerate}

\end{lem}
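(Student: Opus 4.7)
The plan is to reduce the problem to finding an appropriate constituent inside $\hat\llambda^{\otimes 2}$ and then case-split on the shape of $\hat\llambda$.

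For the reduction, since $\llambda = \hat\llambda \Hplus 1_m$, the horizontal semigroup property (Theorem~\ref{thm:Hsum}) together with the trivial relation $c(1_m, 1_m, 1_m)$ shows that any $\mmu \in \hat\llambda^{\otimes 2}$ yields $\mmu \Hplus 1_m \in \llambda^{\otimes 2}$. Since $1_m$ is a single row, horizontally summing with it only extends the first row, so a two-row partition $(k - \ell, \ell)$ maps to $(n - \ell, \ell)$ and a hook $Hook(k - \ell + 1, \ell)$ maps to $Hook(n - \ell + 1, \ell)$, both preserving the leg $\ell$. Thus it suffices to exhibit such a partition inside $\hat\llambda^{\otimes 2}$ with $\ell = \Omega(k)$.

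I would then split on whether $M = b_1$ (Case A: $\hat\llambda$'s first row and first column both equal $M$) or $M = a_2 > b_1$ (Case B: $\hat\llambda$ has two rows of length $M$). In the clean sub-case of Case A where $\hat\llambda = Hook(M, M)$, Lemma~\ref{lem:hookidempotent} directly gives $Hook(M, M) \in Hook(M, M)^{\otimes 2}$, a hook with leg $M \geq k/2$. For general $\hat\llambda$ in Case A, I would use the vertical decomposition $\hat\llambda = (M) \Vplus \mmu$ with $\mmu = (a_2, \ldots, a_M)$ and invoke Corollary~\ref{cor:vsum} in its two-vertical-one-horizontal configuration: combining the trivial $c((M), (M), (M))$ with a Kronecker relation $c(\mmu, \mmu, \nnu)$ yields $c(\hat\llambda, \hat\llambda, (M) \Hplus \nnu)$, and horizontally summing $(M)$ with a hook or two-row $\nnu$ preserves the leg length, reducing the problem to the smaller partition $\mmu$.

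In Case B I would analogously decompose $\hat\llambda = Rect(M, 2) \Vplus \mmu'$ with $\mmu' = (a_3, \ldots, a_{b_1})$ and apply Corollary~\ref{cor:vsum} to the two-row Kronecker self-containment $c((M, M), (M, M), (M, M))$ combined with the trivial $c(\mmu', \mmu', 1_{|\mmu'|})$, obtaining the two-row partition $(k - M, M) \in \hat\llambda^{\otimes 2}$ with $\ell = M$. This suffices whenever $M = \Omega(k)$. The main obstacle is the near-rectangular regime in which $\hat\llambda$ is close to $Rect(M, b_1)$ with both $M$ and $b_1$ comparable to $\sqrt{k}$: here the top-level decompositions yield $\ell$ only on the order of $\sqrt{k}$, so an iterated use of Lemma~\ref{lem:rectsquare} (which can trade rectangle width for height inside $Rect(M, 2)^{\otimes 2}$) together with the semigroup property is needed to assemble a hook or two-row partition of size $\Omega(k)$.
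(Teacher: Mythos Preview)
Your reduction from $\llambda$ to $\hat\llambda$ via the semigroup property is correct and matches the paper. Your Case B computation giving the two-row shape $(k-M,M)\in\hat\llambda^{\otimes 2}$ with $\ell=M$ is also correct (modulo the parity of $M$ in the relation $c((M,M),(M,M),(M,M))$, which fails for $M=1$ and in general needs Lemma~\ref{lem:rectcube} with even side length). The difficulty you flag is real, and it is not resolved by your sketch.

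The gap is precisely the regime $M\asymp b_1\asymp\sqrt{k}$. Your Case B extracts only the top two rows as a rectangle and sends the rest to $1_{|\mmu'|}$, so you get $\ell=M=O(\sqrt{k})$, not $\Omega(k)$. Your Case A ``peel off the top row and recurse'' is not a working induction: after removing $(M)$ you must find a two-row or hook constituent of leg $\Omega(k)$ inside $\mmu^{\otimes 2}$ with $|\mmu|=k-M$, and nothing prevents $\mmu$ from again being a near-square, so the recursion never improves the leg beyond $O(\sqrt{k})$. Finally, Lemma~\ref{lem:rectsquare} is the wrong tool: it converts width into \emph{more} rows, moving you away from a two-row or hook target; and in any case the statement allows only $\llambda^{\otimes 2}$, so you cannot spend extra tensor factors on an iteration.

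The paper's fix is to decompose \emph{all} of $\hat\llambda$ at once into height-$2$ rectangles using Lemma~\ref{lem:krectsinside} with parameter $2$:
\[
\hat\llambda=\Vsum_i\bigl(Rect(2h_i,2)\Hplus\nnu_i\bigr),\qquad \sum_i|\nnu_i|\le 2(M+b_1).
\]
Applying the semigroup property (vertical in the first two slots, horizontal in the third) to the relations $c(Rect(2h_i,2),Rect(2h_i,2),Rect(2h_i,2))$ and $c(\nnu_i,\nnu_i,1_{|\nnu_i|})$ yields
\[
c\Bigl(\hat\llambda,\hat\llambda,\;Rect\bigl(2\textstyle\sum_i h_i,2\bigr)\Hplus 1_{\sum_i|\nnu_i|}\Bigr),
\]
a two-row shape with $\ell=2\sum_i h_i=\tfrac{1}{2}(k-\sum_i|\nnu_i|)$. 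When $b_1\le k/10$ one has $\sum_i|\nnu_i|\le k/2$ (splitting into the sub-cases $M=b_1$ and $M=a_2$ to control $M+b_1$), so $\ell=\Omega(k)$. When $b_1=\Omega(k)$ the paper instead writes $\llambda=Hook(c'n,b_1)\Hplus\mmu$ and uses Lemma~\ref{lem:hookidempotent} directly to produce the hook. The point you are missing is that the many small height-$2$ rectangles, one for each pair of consecutive rows, add up to width $\Omega(k)$ even when no single pair of rows is long.
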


\begin{proof}

First suppose that $b_1=\Omega(k)$. Then we may write $\llambda=\Hook(c'n,b_1)\Hplus \mmu$ for some $\mmu$. We use Lemma~\ref{lem:hookidempotent} and apply the relation $c\big(\mmu,\mmu,1_{|\mmu|}\big)$ and the semigroup property. As $b_1,c'n$ are both at least $\Omega(k)$ this shows we obtain a hook of the form $\Hook(n-\ell+1,\ell)$ for $\ell=\Omega(k)$ inside $\llambda^{\otimes 2}$ in this case.

Next suppose that $b_1\leq k/10$, and suppose further that $M=b_1\leq k/10$. Then we use Lemma~\ref{lem:krectsinside} on $\hat\llambda$, with the value of $k$ in that Lemma equal to $2$. This says we can write
\[
\hat\llambda=\Vsum_{1\leq i\leq \lceil b_1/2\rceil}\left( \Rect(2h_i,2)\Hplus {\nnu_{(i)}} \right)
\]
where $\sum_i |{\nnu_{(i)}}|\leq 2(M+b_1)\leq k/2$. Therefore the semigroup property (with vertical/vertical/horizontal summation in the outer layer) and the relations 
\[
	c\big({\nnu_{(i)}},{\nnu_{(i)}},{1_{|{\nnu_{(i)}}|}}\big),
	\quad 
	{
	c\big(\Rect(2h_i,2),\Rect(2h_i,2),\Rect(2h_i,2)\big)
	}
\] 
imply $c\big(\hat\llambda,\hat\llambda,\Rect(2\sum_i h_i,2)\Hplus 1_{\sum_i |{\nnu_{(i)}}|}\big)$. Since $\sum_i|{\nnu_{(i)}}|\leq k/2$ the third argument equals $\Rect(\ell,2)\Hplus 1_{k-\ell}$ for $\ell=\Omega(k)$. Applying the semigroup property again to $c\big(1_{m},1_m,1_m\big)$ and recalling $\llambda=\hat\llambda\Hplus 1_m$ gives a suitable two row partition inside $\llambda^{\otimes 2}$ in this subcase.

We also have the remaining subcase that $b_1\leq k/10$ and $M=a_2$. In this case we similarly apply Lemma~\ref{lem:krectsinside} to the 3rd row and below in $\hat\llambda$ and separately to the first two rows; the fact that the first two rows of $\hat\llambda$ have equal length improves the bound on $\sum_i|{\nnu_{(i)}}|$ since it means $|\nnu_{(1)}|\leq 2$. The conclusion of this is the same decomposition 
\[
\hat\llambda=\Vsum_{1\leq i\leq \lceil b_1/2\rceil}\left( \Rect(2h_i,2)\Hplus {\nnu_{(i)}} \right)
\]
but with the guarantee $\sum_i |{\nnu_{(i)}}|\leq 2+2(a_3+b_1)$. Since $b_1\leq k/10$ and $a_3{\leq} \frac{M+a_2+a_3}{3}\leq k/3$, it follows that $\sum_i |{\nnu_{(i)}}|\leq 2+k\cdot 0.9$. Since we assumed $k$ is at least a large constant, we again have $\sum_i h_i=k-\sum_i |{\nnu_{(i)}}|=\Omega(k)$ and similarly obtain a suitable two row partition inside $\llambda^{\otimes 2}$. This concludes all the cases of the lemma, finishing the proof.
\end{proof}

Now we finish by proving the result for hooks and two-row partitions. The key is the following version of the more general Pieri rule which we obtain directly from the semigroup property. It says that tensoring with a two-row partition allows us to move a horizontal strip down from the top row, and that tensoring with a hook allows us to extract a vertical strip from the top row.

\begin{lem}
\label{lem:pieri}
For any $\mmu\vdash (n-k)$ we have
\[
c\big((n-k,k),\mmu\Hplus 1_{k},\mmu\Vplus 1_k\big)
\]
and 
\[
c\big(\Hook(n-k+1,k),\mmu\Hplus 1_{k},\mmu\Hplus 1^k\big)
\]
\end{lem}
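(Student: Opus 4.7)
My plan is to derive both relations directly from the semigroup property (Theorem~\ref{thm:Hsum} and Corollary~\ref{cor:vsum}) by combining essentially trivial base Kronecker relations. The three universally valid building blocks I will use are (i) $c(1_{n-k}, \mmu, \mmu)$, which holds because $1_{n-k}$ is the trivial representation so $1_{n-k}\otimes \mmu = \mmu$; (ii) $c(1_k, 1_k, 1_k)$, trivially; and (iii) $c(1^k, 1_k, 1^k)$, since $1_k\otimes 1^k = 1^k$.

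For the hook statement, the key observation is the decomposition $Hook(n-k+1, k) = 1_{n-k}\Hplus 1^k$: horizontally appending a column of height $k$ to a single row of length $n-k$ produces exactly that hook. I then apply Theorem~\ref{thm:Hsum} to horizontally sum the $(n-k)$-sized relation $c(1_{n-k}, \mmu, \mmu)$ with the $k$-sized relation $c(1^k, 1_k, 1^k)$, which yields
\[c\bigl(1_{n-k}\Hplus 1^k,\ \mmu\Hplus 1_k,\ \mmu\Hplus 1^k\bigr) = c\bigl(Hook(n-k+1,k),\ \mmu\Hplus 1_k,\ \mmu\Hplus 1^k\bigr),\]
as desired.

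For the two-row statement, the analogous observation is $(n-k, k) = 1_{n-k}\Vplus 1_k$, i.e., stacking a row of length $k$ beneath a row of length $n-k$. Now the target relation requires a vertical sum in the position holding $(n-k,k)$ and in the position holding $\mmu\Vplus 1_k$, while the middle position $\mmu\Hplus 1_k$ requires a horizontal sum. Since this amounts to an \emph{even} number (namely two) of vertical sums, Corollary~\ref{cor:vsum} is available. Using the symmetry of Kronecker coefficients to permute the target's entries so that the two vertical-sum positions come first, I combine the base relations $c(1_{n-k}, \mmu, \mmu)$ and $c(1_k, 1_k, 1_k)$ with vertical sums in the first two positions and a horizontal sum in the third, obtaining $c((n-k,k),\ \mmu\Vplus 1_k,\ \mmu\Hplus 1_k)$; permuting the last two entries back by symmetry finishes the proof.

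There is no serious obstacle here: once the right decompositions $Hook(n-k+1,k)=1_{n-k}\Hplus 1^k$ and $(n-k,k)=1_{n-k}\Vplus 1_k$ are spotted, both relations reduce to mechanical applications of the semigroup property, with the only subtlety being the bookkeeping to ensure an even number of vertical sums in the second case so that Corollary~\ref{cor:vsum} (rather than Theorem~\ref{thm:Hsum} alone) applies.
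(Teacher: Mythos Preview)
Your proof is correct and follows essentially the same approach as the paper: both parts are obtained by combining the trivial relations $c(1_{n-k},\mmu,\mmu)$ with $c(1_k,1_k,1_k)$ (for the two-row case, summing vertically in the first and third entries) or $c(1^k,1_k,1^k)$ (for the hook case, summing horizontally in all entries). Your extra permutation step to line up the vertical sums with the literal form of Corollary~\ref{cor:vsum} is just bookkeeping and does not change the argument.
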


\begin{proof}

The first follows from the relations $c\big(1_{n-k},\mmu,\mmu\big)$ and $c\big(1_k,1_k,1_k\big)$ and the semigroup property, where we sum vertically in the first and third entries. The second follows from the relations $c\big(1_{n-k},\mmu,\mmu\big)$ and $c\big(1^k,1_k,1^k\big)$ where we sum horizontally in all entries. 
\end{proof}

\begin{figure}[h]
\begin{framed}

\[
c\begin{pmatrix}\ydiagram{6,0,0}\\ \ydiagram{3,2,1,0,0} \\ \ydiagram{3,2,1}\end{pmatrix}\Hplus c\begin{pmatrix}\ydiagram{1,1,1,1,0}\\ \ydiagram{4,0} \\ \ydiagram{1,1,1,1}\end{pmatrix}=c\begin{pmatrix}\ydiagram{7,1,1,1,0}\\ \ydiagram{7,2,1,0}\\ \ydiagram{4,3,2,1}\end{pmatrix}
\]

\caption{An illustration of the second statement in Lemma~\ref{lem:pieri} when $\mmu=\vvarrho_3$ and $k=4$}

\end{framed}
\end{figure}

\begin{lem}
\label{lem:tworowfinish}
Let $\llambda=(n-k,k)$. Then $\llambda^{\otimes t}$ covers $\Irrep(S_n)$ for $t\geq O\left(\frac{n\log n}{k\log(n/k)}\right)$.

\end{lem}

\begin{proof}

First suppose that $k^2 \leq n/10$. Then iterating the first part of Lemma~\ref{lem:pieri} $k$ times shows that $\Rect(k,k)\Hplus 1_{n-k^2}\in \llambda^{\otimes k}$. We have previously seem that $\Rect(k,k)^{\otimes O(1)}$ covers $\Irrep(S_{k^2})$ and that a further $O(n/k^2)$ tensor power then suffices to cover all of $\Irrep(S_n)$. In total this shows that $\llambda^{\otimes O(n/k)}$ covers $\Irrep(S_n)$ as claimed. (Note that if $k^2\leq n$ then $\log(n)\asymp \log(n/k)$.)

Next suppose that $k^2\geq n/10$. Then iterating the first statement of Lemma~\ref{lem:pieri} shows that $\Rect(k,h)\Hplus 1_{n-hk}\in\llambda^{\otimes h}$ for $h=\Theta(n/k)\leq k$. Here we have $h\geq 2$, and $k\geq 1000$ (as we already handled the case where $k=O(1)$.) We essentially will just apply Lemma~\ref{lem:rectcover} but we technically need to ensure $k,h$ are powers of $16$.

We first assume $h\geq 16$. Then we simply round $h,k$ down to $\tilde h,\tilde k$, the largest smaller powers of $16$, and observe that 
\begin{equation}
\label{eq:Rectkh}
\Rect(k,h)\Hplus 1_{n-hk}=(\Rect(\tilde k,\tilde h)\Vplus \nnu)\Hplus \mmu 
\end{equation}
for appropriate $\mmu,\nnu$. As we assumed $h\leq k$ we have that $\tilde h$ divides $\tilde k$ and so {by Lemma~\ref{lem:rectcube},}
\begin{equation}
\label{eq:Recttildekh}
	c\big(\Rect(\tilde k,\tilde h),\Rect(\tilde k,\tilde h),\Rect(\tilde k,\tilde h)\big).
\end{equation}
{Combining \eqref{eq:Rectkh} and \eqref{eq:Recttildekh}} using the semigroup property gives 
\begin{align*}
	&c\big(\Rect(k,h)\Hplus 1_{n-hk},\Rect(k,h)\Hplus 1_{n-hk},\Rect(\tilde k,\tilde h)\Hplus 1_{n-\tilde k\tilde h}\big)
	\\
	\implies 
	&\Rect(\tilde k,\tilde h)\Hplus 1_{n-\tilde k\tilde h}\in {\llambda^{\otimes O(n/k)}}.
\end{align*}
Now we can apply Lemma~\ref{lem:rectcover} to finish. It shows that a further $O\left(1+\frac{n\log k}{n\log (n/k)}\right)$ tensor powers are needed. As $k^2\geq n/10$ this is exactly what we wanted to show.

If $h\leq 15$ we act similarly, with $\tilde h=2$ and $\tilde k$ the largest number which is $8$ times a power of $16$ and at most $k$. Using $\Rect(a,16)\in \Rect(4a,4)^{\otimes 2}\subseteq \Rect(8a,2)^{\otimes 4}$ we obtain rectangles with the same size up to constants whose side lengths are powers of $16$ and proceed identically to the above.
\end{proof}

\begin{lem}
\label{lem:hookfinish}
Let $\llambda=\Hook(n-k+1,k)$. Then $\llambda^{\otimes t}$ covers $\Irrep(S_n)$ for $t\geq O(\frac{n\log n}{k\log(n/k)})$.

\end{lem}

\begin{proof}

The proof is identical to the two-row case above using instead the second statement of Lemma~\ref{lem:pieri} and conjugating all rectangles.
\end{proof}

Combining Lemmas~\ref{lem:hookortworow}, \ref{lem:tworowfinish}, \ref{lem:hookfinish} we conclude that $\llambda^{\otimes t}$ covers $\Irrep(S_n)$ for $t\geq O(\frac{n\log n}{k\log(n/k)})$. This completes the proof of Theorem~\ref{thm:powersquantitative} in case 3 and hence in general.

\section*{Acknowledgement}

The author gratefully acknowledges support of NSF and Stanford Graduate Fellowships. I thank Daniel Bump, Pavel Etingof, Xiaoyue Gong, Sammy Luo, Alex Malcolm, Chris Ryba and the anonymous referee for helpful comments, corrections, discussions, and references. I thank Greta Panova for bringing \cite{liebeck2019diameters} to my attention after the initial posting of this paper.

\bibliographystyle{alpha}
\bibliography{randomsaxl}

\appendix

\section{Alternate Proof of Fourth Power Saxl Theorem}

Here we give an alternate proof of Theorem~\ref{thm:SaxlFourth} that $\vvarrho_r^{\otimes 4}$ covers $\Irrep(S_n)$ for $r$ sufficiently large (which is Theorem 1.4 of \cite{LuoSellke}) based on another main result from \cite{LuoSellke}. The implication is immediate from a lemma on the representation theory of an arbitrary finite group $G$ which we suspect to be known but have not been able to locate in the literature. 

\begin{defn}

Let $G$ be a finite group and let $M_G$ be the Plancherel probability measure on $\Irrep(G)$ which assigns an irreducible representation $\llambda$ a probability $M_G(\llambda)=\frac{\dim(\llambda)^2}{|G|}$. For an arbitrary finite-dimensional $G$-representation $V$, let $M_G(V)$ denote the Plancherel measure of the set of distinct irreducible subrepresentations of $V$.

\end{defn}

Theorem 1.6 of \cite{LuoSellke} states that $\vvarrho_r^{\otimes 2}$ contains Plancherel-asymptotically-almost-all of $\mathcal Y_n$ for $n=\binom{r+1}{2}$, i.e. $\lim_{r\to\infty} M_{S_n}(\vvarrho_r^{\otimes 2})=1$. Therefore Theorem~\ref{thm:SaxlFourth} follows immediately from the lemma below. We note that proving Theorem 1.6 of \cite{LuoSellke} relies on the deep work of \cite{Borodin}, so the proof of Theorem~\ref{thm:SaxlFourth} given in \cite{LuoSellke} is more elementary than the present proof. Nonetheless we find the connection enlightening.

\begin{lem}
\label{lem:appendix}
Suppose $M_G(V)+M_G(W)>1$. Then $V\otimes W$ covers $\Irrep(G)$.

\end{lem}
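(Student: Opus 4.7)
My plan is to reduce the conclusion to an intersection statement via Frobenius reciprocity and then prove that tensoring by an irreducible cannot decrease Plancherel mass.

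For any fixed $\nu \in \Irrep(G)$, Frobenius reciprocity gives $\mathrm{Hom}_G(\nu, V \otimes W) \cong \mathrm{Hom}_G(\nu \otimes W^*, V)$, so $\nu \subseteq V \otimes W$ if and only if the sets of distinct irreducible constituents of $V$ and of $\nu \otimes W^*$ share a common element. Since $M_G(W^*) = M_G(W)$, it would suffice to establish the monotonicity
\[M_G(\rho \otimes U) \geq M_G(U)\]
for any irreducible $\rho$ and any representation $U$: applying this with $\rho = \nu$ and $U = W^*$ would give $M_G(V) + M_G(\nu \otimes W^*) \geq M_G(V) + M_G(W) > 1$, and two subsets of $\Irrep(G)$ whose total Plancherel mass exceeds $1$ must intersect.

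For the monotonicity claim, I would take $U$ multiplicity-free with constituent set $A$ and let $B$ denote the constituent set of $\rho \otimes U$. The key step is a double count of
\[S := \sum_{\sigma \in A}\,\sum_{\mu \in \Irrep(G)} c_{\rho\sigma}^\mu \, \dim(\sigma)\,\dim(\mu),\]
where $c_{\rho\sigma}^\mu$ is the multiplicity of $\mu$ in $\rho \otimes \sigma$. Summing over $\mu$ first and using $\sum_\mu c_{\rho\sigma}^\mu \dim(\mu) = \dim(\rho \otimes \sigma) = \dim(\rho)\dim(\sigma)$ evaluates $S$ exactly as $\dim(\rho)\sum_{\sigma \in A}\dim(\sigma)^2$. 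Summing over $\sigma$ first, only $\mu \in B$ contribute (by the definition of $B$); dropping the restriction $\sigma \in A$ to $\sigma \in \Irrep(G)$ only increases the inner sum, and Frobenius reciprocity $c_{\rho\sigma}^\mu = c_{\rho^*\mu}^\sigma$ together with $\sum_\sigma c_{\rho^*\mu}^\sigma \dim(\sigma) = \dim(\rho^* \otimes \mu) = \dim(\rho)\dim(\mu)$ yields $S \leq \dim(\rho)\sum_{\mu \in B}\dim(\mu)^2$. Comparing the two evaluations and dividing by $\dim(\rho)$ produces $M_G(A) \leq M_G(B)$, as needed.

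I expect the only delicate point to be bookkeeping: verifying the two dimension formulas (both instances of $\dim(\rho \otimes \tau) = \dim(\rho)\dim(\tau)$), checking that terms with $\mu \notin B$ indeed vanish in the second ordering, and applying Frobenius reciprocity in the correct direction. Beyond that, the argument is entirely elementary and short; no quantitative properties of $G$ or of Plancherel measure beyond $\sum_\lambda \dim(\lambda)^2 = |G|$ and the standard character identities enter.
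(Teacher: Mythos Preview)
Your proposal is correct. The overall architecture matches the paper's: both reduce, via Frobenius reciprocity, to the monotonicity statement $M_G(\rho\otimes U)\geq M_G(U)$ for $\rho$ irreducible, and then conclude by pigeonhole. The genuine difference lies in how that monotonicity is proved. The paper argues analytically: it forms the $L^2$-projection $\tilde U$ of the regular character onto the span of the irreducible constituents of $U$, observes that multiplying by $\chi^\rho/\dim\rho$ keeps the value at the identity fixed while shrinking all other values in absolute value, hence does not increase the $L^2$ distance to $\Reg$, and reads off the inequality from the formula $|\Reg-\tilde U|^2=|G|(1-M_G(U))$. Your route is purely combinatorial: the double count of $S=\sum_{\sigma\in A}\sum_\mu c_{\rho\sigma}^\mu\dim(\sigma)\dim(\mu)$ together with the identity $c_{\rho\sigma}^\mu=c_{\rho^*\mu}^\sigma$ gives the inequality directly. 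Your argument is more elementary and self-contained (no $L^2$ framework needed), while the paper's argument is perhaps more conceptual and hints at why Plancherel measure is the right measure here. Both are short and neither requires anything beyond standard character orthogonality.
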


\begin{proof}

The conclusion is equivalent to the statement that $\langle \chi^V\chi^W,\chi^{\llambda^*}\rangle >0$ for any irreducible representation $\llambda$, where $(\cdot)^{*}$ denotes the dual representation. As $\langle \chi^V\chi^W,\chi^{\llambda^*}\rangle=\langle \chi^W,\chi^{V^*}\chi^{\llambda^*}\rangle$, this is equivalent to showing the tensor product $V^*\otimes \llambda^*$ shares some irreducible subrepresentation with $W$. We will prove that $M_G(V\otimes \llambda)\geq M_G(V)$. This implies $M_G(V^*\otimes \llambda^*)+M_G(W)=M_G(V\otimes \llambda)+M_G(W)>1$ so that they share a subrepresentation by the pigeonhole principle.

To see this, we work in the standard inner product space $L^2(G)$ and recall that irreducible characters $\chi^{\mmu}$ for $\mmu\in\Irrep(G)$ are orthonormal. We identify representations with their characters. Consider for any representation $U$ the best $L^2$ approximation to the regular representation $\Reg$ of $G$ lying in the linear space 
\[
	\left\{\sum_{\mmu_U\in U} a_{\mmu_U}{\chi^{\mmu_U}}~:~a_{\mmu_U}\in \mathbb R\right\}.
\]
From the point of view of irreducible representations it is clear that the best approximation $\tilde U$ is obtained by projection via $a_{\mmu_U}=\dim(\mmu_U)$, and the $L^2$ error of this approximation $\tilde U$ is therefore $|\Reg-\tilde U|=\sqrt{|G|\cdot (1-M_G(U))}$.

We form $\tilde V$ and multiply its character by $\frac{\chi^{\llambda}}{\dim\llambda}$, and by abuse of notation treat this as a tensor product of fractional representations. The key point is that $\tilde V\otimes \frac{\llambda}{\dim\llambda}$ has the same character value at the identity element of $G$, and a smaller character value (in absolute value) at all other elements. Since $\Reg$ has character value $0$ at all non-identity elements, computing the distances using the character basis implies
\[
\left|\tilde V\otimes \frac{\llambda}{\dim\llambda}-\Reg\right|\leq |\tilde V-\Reg|.
\]
Moreover $\tilde V\otimes \frac{\llambda}{\dim\llambda}$ is in the $\mathbb R$-span of the irreducible subrepresentations of $V\otimes \llambda$. Since the function $\sqrt{1-M_G(U)}$ is decreasing in $M_G(U)$, the fact that by using subrepresentations of $V\otimes \llambda$ we weakly improved upon the best $L^2$ approximation to $\Reg$ using subrepresentations of $V$ implies $M_G(V\otimes \llambda)\geq M_G(V)$ as desired.
\end{proof}

Note that Lemma~\ref{lem:appendix} becomes completely false if Plancherel measure is replaced by uniform measure. For instance, the group of invertible affine transformations of $\mathbb F_p$ has $p-1$ irreducible representations of dimension $1$ and one of dimension $p-1$. If $V,W$ each contain exactly the $1$-dimensional irreducible representations then $V\otimes W$ still consists of only $1$-dimensional irreducibles, hence does not cover $\Irrep(G)$.

\end{document}